\documentclass[a4paper,reqno,10pt]{amsart}

\usepackage[USenglish]{babel}

\usepackage{amsmath}
\usepackage{amssymb}
\usepackage{verbatim}
\usepackage{stackrel}
\usepackage[noadjust]{cite}

\usepackage[shortlabels]{enumitem}
\usepackage{comment}	
\usepackage{url}
\usepackage{hyperref}
\hypersetup{
    colorlinks=true,
    linkcolor=blue,
    citecolor=red,
    filecolor=magenta,      
    urlcolor=cyan,
    pdftitle={Locally Eventually Positive Operator Semigroups},
    bookmarks=true,
    linktocpage=true,
}

\usepackage[utf8]{inputenc}

\usepackage{amssymb}
\usepackage{amsmath}
\usepackage{verbatim, stackrel}

\usepackage{stmaryrd}
\usepackage{tikz}
\usepackage{tikz-cd}

\usepackage[shortlabels]{enumitem}
\usepackage{marginnote}


\newcommand{\bbC}{\mathbb{C}}

\newcommand{\bbN}{\mathbb{N}}

\newcommand{\bbR}{\mathbb{R}}

\newcommand{\bbZ}{\mathbb{Z}}

\newcommand{\calL}{\mathcal{L}}

\DeclareMathOperator{\one}{\mathbf{1}} 
\DeclareMathOperator{\re}{Re} 
\newcommand{\argument}{\mathord{\,\cdot\,}} 
\newcommand{\norm}[1]{\left\lVert #1 \right\rVert} 
\newcommand{\modulus}[1]{\left\lvert #1 \right\rvert} 
\newcommand{\duality}[2]{\left\langle#1\, ,\, #2\right\rangle} 
\newcommand{\dom}[1]{D\left(#1\right)} 
\DeclareMathOperator{\Ima}{Rg} 
\newcommand\restrict[1]{\raisebox{-.5ex}{$|$}_{#1}} 
\DeclareMathOperator{\abs}{abs} 
\DeclareMathOperator{\hol}{hol} 
\newcommand{\dist}[2]{\operatorname{dist}\left(#1, #2\right)} 

\newcommand{\spec}{\sigma} 
\newcommand{\Res}{\mathcal{R}} 
\newcommand{\perSpec}{\spec_{\operatorname{per}}} 
\newcommand{\perP}{P_{\operatorname{per}}} 
\newcommand{\spb}{s} 
\newcommand{\gbd}{\omega_0} 

\newcommand{\Implies}[2]{``\ref{#1} $\Rightarrow$ \ref{#2}'':}


\theoremstyle{definition}
\newtheorem{definition}{Definition}[section]
\newtheorem{remark}[definition]{Remark}
\newtheorem{remarks}[definition]{Remarks}
\newtheorem*{remark*}{Remark}
\newtheorem*{remarks*}{Remarks}

\newtheorem{assumptions}[definition]{Assumptions}

\theoremstyle{plain}
\newtheorem{proposition}[definition]{Proposition}
\newtheorem{lemma}[definition]{Lemma}
\newtheorem{theorem}[definition]{Theorem}
\newtheorem*{theorem*}{Theorem}
\newtheorem{corollary}[definition]{Corollary}

\numberwithin{equation}{section} 

\begin{document}

\title[Locally eventually positive semigroups]{Locally Eventually Positive Operator Semigroups}
\author{Sahiba Arora}
\address{Sahiba Arora, Technische Universität Dresden, Institut für Analysis, Fakultät für Mathematik, 01062 Dresden, Germany}
\email{sahiba.arora@mailbox.tu-dresden.de}
\subjclass[2010]{47D06; 47B65; 47A10; 34B09; 34G10}
\keywords{One-parameter semigroups of linear operators; semigroups on Banach lattices; eventually positive semigroup; locally eventually positive semigroup; positive spectral projection; eventually positive resolvent; locally eventually positive resolvent; anti-maximum principle}
\date{\today}
\begin{abstract}
We initiate a theory of locally eventually positive operator semigroups on Banach lattices. Intuitively this means:~given a positive initial datum, the solution of the corresponding Cauchy problem becomes (and stays) positive in a part of the domain, after a sufficiently large time. A drawback of the present theory of eventually positive $C_0$-semigroups is that it is applicable only when the leading eigenvalue of the semigroup generator has a strongly positive eigenvector. We weaken this requirement and give sufficient criteria for individual and uniform local eventual positivity of the semigroup. This allows us to treat a larger class of examples by giving us more freedom on the domain when dealing with function spaces -- for instance, the square of the Laplace operator with Dirichlet boundary conditions on $L^2$ and the Dirichlet bi-Laplacian on $L^p$-spaces. Besides, we establish various spectral and convergence properties of locally eventually positive semigroups. 
\end{abstract}

\maketitle

\section{Introduction}
\label{section:introduction}
While positive $C_0$-semigroups have a rich theory with a broad variety of applications (see~\cite{Nagel1986} for a survey or the more recent~\cite{BatkaiKramarRhandi2017}), the study of semigroups that become positive only for large enough times began not too long ago. At first, eventually positive matrix semigroups were studied, for example in~\cite{Noutsos2006,NoutsosTsatsomeros2008} and~\cite{TarazagaRaydanHurman2001}. More recently, an example of such a semigroup on an infinite-dimensional space emerged from the study of the Dirichlet-to-Neumann operator on the unit circle~\cite{Daners2014}. This led to the initiation of a systematic theory of \emph{eventually positive semigroups} in~\cite{DanersGlueckKennedy2016a,DanersGlueckKennedy2016b}, where a number of applications were also presented (see also~\cite[Part~III]{Glueck2016}). The theory was subsequently expanded in~\cite{DanersGlueck2017,DanersGlueck2018b,DanersGlueck2018a}.

 A limitation of the present theory is that it is applicable only when the generator of the semigroup has a leading eigenvalue with a corresponding strongly positive eigenvector. For example, eventual positivity properties of the resolvent and the semigroup corresponding to the Dirichlet bi-Laplacian on domains which are sufficiently close to a unit ball in $\bbR^d$ (in the sense of~\cite[Theorem~5.2]{GrunauSweers1997b}) were proved in~\cite[Section~6]{DanersGlueckKennedy2016b}. However, for a general bounded domain, one need not have the existence of a strongly positive eigenvector as in~\cite[Theorem~6.3]{DanersGlueckKennedy2016b}. Nevertheless, one can still expect the eigenfunction to be positive in a part of the domain. Similarly, when dealing with non-homogeneous  boundary value problems, it may help if one restricts to \emph{nodal domains} to obtain any kind of eventual positivity.
 
In fact, this notion of \emph{local eventual positivity} is not new and examples for the same appeared several years earlier. For instance, it was shown in~\cite{GazzolaGrunau2008} that for a homogeneous biharmonic heat equation in $\bbR^d$, given a positive initial datum with compact support, the solution of the corresponding Cauchy problem is eventually positive on \emph{compact sets} and that the solution is not eventually positive on the whole domain. This result was generalized to the case when the initial data is not compactly supported but has a decaying behaviour in~\cite{FerreroGazzolaGrunau2008} and similar results for the non-homogeneous case were proved in~\cite{Berchio2008}. Recently, it was observed that fractional polyharmonic equations also exhibit such a property~\cite{FerreiraFerreira2019}. In the spirit of this, the current paper aims to study local eventual positivity for strongly continuous semigroups on general Banach lattices. To be more precise, given a $C_0$-semigroup $(e^{tA})_{t \geq 0}$ and positive operators $S$ and $T$, we are interested in the positivity of $S e^{tA} T f$ for \emph{large} times $t\geq 0$ whenever $f$ is positive. We point out that our results apply to very general classes of operators, but assume the leading spectral value is an eigenvalue -- whereas, the results in~\cite{Berchio2008,FerreiraFerreira2019,FerreroGazzolaGrunau2008,GazzolaGrunau2008} only deal with specific operators, albeit do not need the existence of eigenvalues.

\subsection*{Organization of the article}

We begin by recalling several concepts and introducing some terminology in Section~\ref{section:terminology}. Then we prove some conditions sufficient for local eventual positivity in Sections ~\ref{section:sufficient-conditions-individual} and \ref{section:sufficient-conditions-uniform}. Taking inspiration from~\cite[Section~4]{DanersGlueckKennedy2016a} and~\cite[Section~4]{DanersGlueckKennedy2016b}, we prove local eventual positivity results for the semigroup as well as the resolvent. In Section~\ref{section:applications}, we illustrate how our results from the previous sections can be applied to concrete examples. Towards the end, we deduce a few properties of local eventual positivity in Section~\ref{section:consequences-i}, and finally in Section~\ref{section:consequences-ii}, we look at various spectral and convergence implications of local eventual positivity.

Throughout the paper, we adopt several techniques and adapt many results from~\cite{DanersGlueckKennedy2016a,DanersGlueckKennedy2016b,DanersGlueck2017,DanersGlueck2018b} to our more general setting. However, even in cases where an argument closely resembles one from those references, we give full details in order to make the paper more self-contained.

\section{Terminology and preliminaries}
\label{section:terminology}
We will use the following notation throughout this article. For complex Banach spaces $E$ and $F$, we denote the space of bounded linear operators from $E$ to $F$ by $\calL(E,F)$ (and by $\calL(E)$ when $E=F$). The range of an operator $T \in \calL(E,F)$ will be denoted by $\Ima T$. As usual, the dual space of $E$ is denoted by $E'$ and if $u$ is a vector in $E$ and $\phi$ is a functional in $E'$, then we write $u \otimes \phi$ to denote the element of $\calL(E)$ given by $f \mapsto \duality{\phi}{f}u$. On function spaces, the indicator function $\one_A$ of a set $A$, denotes the function which takes value one on $A$ and zero elsewhere.

\subsection*{Banach lattices}

We expect that the reader is familiar with the theory of Banach lattices, for which we refer to the monographs~\cite{Schaefer1974,Meyer-Nieberg1991}. Let $E$ be a complex Banach lattice. Then $E$ is the complexification of a real Banach lattice (which we denote by $E_{\bbR}$); see~\cite[Appendix~C]{Glueck2016}. As is typical, the symbols $f^+, f^-$, and $|f|$ stand  respectively for the positive part, the negative part, and the modulus of $f\in E$. The \emph{positive cone} of $E$ is denoted by $E_+$ and consists of all positive elements of $E$, i.e., $E_+=\{f \in E: f \geq 0\}$. For a vector $f$ in $E$, we write $f>0$ to denote that it is both positive and non-zero. If $u$ is another positive vector in $E$, then $f$ is called \emph{strongly positive with respect to u} (written $f \gg_u 0$) if there exists $c>0$ such that $f \geq cu$. Analogously, we say $f$ is \emph{strongly negative with respect to u} and write $f\ll_u 0$ if $-f \gg_u 0$.

The \emph{principal ideal} generated by $u$ is given by
\[
	E_u:=\{f \in E: \text{ there exists } c\geq 0 \text{ such that } \modulus{f}\leq cu\}
\]
and it is endowed with the lattice norm complexification~\cite[Section~II.11]{Schaefer1974} of the \emph{gauge norm} on $\left(E_{\bbR}\right)_u$. The gauge norm is given by
\[
	\norm{f}_u := \inf \{ c\geq 0 : \modulus{f} \leq cu \}, \quad f \in E_u.
\]
Recall that the principal ideal embeds continuously into $E$. In addition, if this embedding is dense, then $u$ is said to be \emph{strongly positive} or alternatively, a \emph{quasi-interior point} of $E$ (denoted by $u \gg 0$). A vector $f$ is said to be \emph{strongly negative} ($f\ll 0$) if $-f$ is strongly positive. It is clear that, if a vector is strongly positive with respect to a quasi-interior point, then it itself is a quasi-interior point.

Now, let $F$ be another Banach lattice and $S,T\in \calL(E,F)$. Then $T$ is called \emph{positive} (denoted by $T\geq 0$) if $T$ maps the positive cone of $E$ to the positive cone of $F$ and it is called \emph{strongly positive} (denoted by $T \gg 0$) if it maps every positive non-zero element of $E$ to a quasi-interior point of $F$. Similarly, we say $T$ is \emph{strongly negative} (written as $T\ll 0$) if $-T \gg 0$. When $T$ and $S$ are real operators (defined below), then naturally, we use the shorthand $T \geq S$ and $T \gg S$ for $T-S\geq 0$ and $T-S \gg 0$ respectively. If $u \in E_+$, then we say $T$ is \emph{strongly positive with respect to $u$} (denoted by $T\gg_u 0$) if $Tf \gg_u 0$ whenever $f>0$. In a parallel manner, we say $T$ is \emph{strongly negative with respect to $u$} and write $T \ll_u 0$ if $-T \gg_u 0$.  Moreover, we say a functional $\phi \in E'$ is \emph{strictly positive} if $\duality{\phi}{f} >0$ for every $f>0$, or equivalently if $\ker \phi$ contains no positive non-zero element.

We point out that if $\phi$ is a quasi-interior point of $E'$, then it must be strictly positive but the converse is not true. Due to this, we have the following ambiguity:~Saying $\phi$ is strongly positive ($\phi \gg 0$), could either mean that $\phi$ is a quasi-interior point of $E'$ or it could mean that $\phi$ is a strongly positive as an operator from $E$ to the corresponding scalar field (which is equivalent to the strict positivity of $\phi$). To get past this, we never use the term ``strong positivity'' (or the notation $\gg$) when talking about a linear functional.

In order to prove sufficient conditions for uniform local eventual positivity in Section~\ref{section:sufficient-conditions-uniform}, it will be useful to endow $E$ with another norm:~A strictly positive functional $\phi$ on $E$ also generates a norm on $E$, given by $\norm{\argument}_{E^\phi}:= \duality{\phi}{\modulus{\argument}}$. The completion of $E$ with respect to this norm is a Banach lattice and will be denoted $E^{\phi}$. As a result, the following canonical injections are continuous and injective lattice homomorphisms:
\[
	E_u \hookrightarrow E \hookrightarrow E^{\phi}.
\]
Furthermore, the second embedding has a dense range. An important property of the space $E^{\phi}$ is that it is an \emph{AL-space}, i.e., $\norm{f+g}_{E^\phi}=\norm{f}_{E^\phi}+\norm{g}_{E^\phi}$ for all positive vectors $f$ and $g$ in $E^{\phi}$. Therefore, there exists an isometric Banach lattice isomorphism from $E^{\phi}$ to a $L^1$-space~\cite[Theorem~2.7.1]{Meyer-Nieberg1991}. 

Finally, in Section~\ref{section:consequences-ii}, we will use the concept of band projections on Banach lattices. A projection $S$ on a Banach lattice $E$ is called a \emph{band projection} if both $S$ and $I-S$ are positive operators, i.e., $0 \leq Su \leq u$ for every positive vector $u$ in $E$. Every band projection is a lattice homomorphism and each pair of band projections commutes~\cite[Theorem~II.2.9]{Schaefer1974}. 

\subsection*{Linear operators and spectral theory}

The domain of a linear operator $A$ on a Banach space $E$ is denoted by $\dom{A}$ and it is endowed with the graph norm. If $A$ is densely defined, we denote its dual by $A'$. Further, if $E$ and $F$ are Banach lattices and $A: \dom{A}\subseteq E \to F$ is linear then $A$ is said to be \emph{real} if, for every element $x+iy$ (where $x,y\in E_{\bbR}$) in $\dom{A}$, we have $x,y \in \dom{A}$ and $A$ maps the real part of $\dom{A}$ to the real part of $F$. Positive operators are natural examples of real operators.

Let $A$ be a closed linear operator on a Banach space $E$. We denote the \emph{resolvent set} and \emph{spectrum} of $A$ by $\rho(A)$ and $\spec(A)$ respectively. The \emph{spectral bound} of $A$ is defined as 
\[
	\spb(A):= \sup \{\re \lambda : \lambda \in \spec(A)\} \in [-\infty,\infty]
\]
and the \emph{peripheral spectrum} of $A$  is given by
\[
	\perSpec(A):= \spec(A) \cap (\spb(A) + i\bbR);
\]
whenever $\spb(A)$ is a real number.

We say that $\spb(A)$ is a \emph{dominant spectral value} if $\perSpec(A) = \{\spb(A)\}$. The resolvent of $A$ at ${\lambda \in \rho(A)}$ is the bounded linear operator ${\Res(\lambda,A):E \to \dom{A}\subseteq E}$ given by $\Res(\lambda, A)=(\lambda-A)^{-1}$. One can immediately see that the resolvent $\Res(\lambda,A)$ is real if both $A$ and $\lambda$ are real. We will make extensive use of the spectral theory of closed linear operators -- especially the Laurent series expansion of the resolvent $\Res(\argument,A)$ about its poles -- and the spectral projection of the resolvent $\Res(\argument,A)$ associated to a bounded isolated set;  see the classical references~\cite[Chapter~VIII]{Yosida1980},~\cite[Section~III.6]{Kato1980},~\cite[Chapter~V]{TaylorLay1986}, and~\cite[Section~IV.1]{EngelNagel2000} or alternatively~\cite[Appendix~A]{Glueck2016}. Let $\lambda_0$ be an isolated spectral value of $A$ and a first-order (\emph{simple}) pole of the resolvent $\Res(\argument,A)$. If $P$ denotes the associated spectral projection, then an important fact that we make use of throughout, without quoting is that $\Ima P = \ker(\lambda_0 -A)$ and $\Ima P'=\ker(\lambda_0-A')$; see~\cite[Theorem~VIII.8.3]{Yosida1980}.

\subsection*{Operator semigroups and eventual positivity} 

Finally, we assume the reader is also acquainted with the theory of $C_0$-semigroups. A standard reference for this is~\cite{EngelNagel2000} or~\cite[Part~A]{Nagel1986}. For a $C_0$-semigroup generated by an operator $A$ on a Banach space $E$, we use the notation $(e^{tA})_{t \geq 0}$ and denote its \emph{growth bound} by $\gbd(A)$. Recall that the growth bound of the semigroup is always larger than or equal to the spectral bound of the generator~\cite[Corollary~II.1.13]{EngelNagel2000}. The dual operator $A'$ is the weak$^*$-generator of the dual semigroup $\left((e^{tA})'\right)_{t \geq 0}$ on $E'$, which is continuous in the weak$^*$-topology (see~\cite[Section~II.2.5]{EngelNagel2000} for a brief overview or the monograph~\cite{Neerven1992} for extensive literature on dual semigroups).

 If $E$ is a Banach lattice, then $(e^{tA})_{t \geq 0}$ is said to be a \emph{real semigroup} if each operator $e^{tA}$ is real. Clearly, a semigroup is real if and only if its generator is real. Throughout the paper, we deal with real semigroups. From an application perspective, this condition is mild:~for instance, every $C_0$-semigroup generated by a differential operator with real coefficients is real. 
 
Lastly, recall that a real semigroup is called \emph{individually eventually strongly positive with respect to $u \in E_+$} if, for each positive non-zero vector $f$ in $E$, there exists a time $t_0\geq 0$ such that $e^{tA}f \gg_u 0$ for all $t \geq t_0$. If the time $t_0$ can be chosen independently of $f$, then we say that the semigroup is  \emph{uniformly eventually strongly positive with respect to $u$}. In~\cite[Examples~5.7 and 5.8]{DanersGlueckKennedy2016b}, it was shown that individual and uniform eventual positivity are not equivalent on infinite-dimensional spaces. Various notions of eventual positivity of both the semigroup and the resolvent are defined and characterized in~\cite{DanersGlueckKennedy2016a} and~\cite{DanersGlueckKennedy2016b}. Moreover, a sufficient condition for uniform eventual positivity of the semigroup is proved in~\cite[Theorem~3.1]{DanersGlueck2018b}.

\subsection*{Main definitions}

We end this section by introducing the central notions of this article. But first, we state some assumptions, which will remain our standard assumptions all through Sections~\ref{section:sufficient-conditions-individual} and \ref{section:sufficient-conditions-uniform}. Therefore it is helpful to collect them here:
\begin{assumptions}
	\label{ass:standing-assumptions}
	Let $E, F$, and $G$ be complex Banach lattices. Additionally:
	\begin{enumerate}[\upshape (a)]
		\item Let $A$ be a closed, densely defined, and real operator on $F$.
		
		\item Let $S \in \calL(F,G)$ and $T\in \calL(E,F)$ be positive operators.
		
		\item Let $u \in F$ and $\phi \in F'$ be such that $Su$ is a positive vector in $G$ and $T'\phi$ is a strictly positive functional on $E$.
	\end{enumerate}
\end{assumptions}
The following diagram illustrates the situation in Assumptions~\ref{ass:standing-assumptions}(b) and (c):

\begin{center}
			\begin{tikzcd}[contains/.style = {draw=none,"\in" description,sloped}]
						     & & u\arrow[contains]{d} \arrow[mapsto]{rr} & & Su \arrow[contains]{d} 	\\
				E \arrow{rr}{T} & & F \arrow{rr}{S} & & G \\
				E'  & & F' \arrow{ll}{T'} & & G' \arrow{ll}{S'}\\
				\arrow[contains]{u}T'\phi & & \arrow[mapsto]{ll} \phi\arrow[contains]{u} & &
			\end{tikzcd}
\end{center}

We are now ready to define local eventual positivity of the resolvent (cf.~\cite[Definition~4.1]{DanersGlueckKennedy2016b}). We again point out that in view of~\cite[Examples~5.7 and 5.8]{DanersGlueckKennedy2016b}, we must distinguish between individual and uniform local eventual positivity.

\begin{definition}
	Let Assumptions~\ref{ass:standing-assumptions} be fulfilled and $\lambda_0$ be either $-\infty$ or a real spectral value of $A$.
	\begin{enumerate}[\upshape (a)]
	\item We say $S \Res(\argument,A) T$ is \emph{individually eventually strongly positive with respect to $Su$ at $\lambda_0$} if for every $0<f \in E$, there exists $\lambda_1>\lambda_0$ with the following properties:~$(\lambda_0,\lambda_1]\subseteq \rho(A)$ and $S\Res(\lambda, A)Tf\gg_{Su} 0$ for all $\lambda \in (\lambda_0,\lambda_1]$.
	
	\item We say $S \Res(\argument,A) T$ is \emph{uniformly eventually strongly positive with respect to $Su$ at $\lambda_0$} if  there exists $\lambda_1>\lambda_0$ such that ${(\lambda_0,\lambda_1]\subseteq \rho(A)}$ and the operator ${S\Res(\lambda, A)T\gg_{Su} 0}$ for all $\lambda \in (\lambda_0,\lambda_1]$.
	\end{enumerate}
\end{definition}

Let $\Omega \subseteq \bbR^d$ be a bounded domain with sufficiently smooth boundary and $A_p$ be the \emph{Dirichlet bi-Laplacian} on $L^p(\Omega,\bbC)$, for some $1<p<\infty$. In Section~\ref{section:applications}, we will show that (under technical assumptions) for large $p$, the family $S \Res(\argument, A_p)T$ is individually eventually strongly positive with respect to ${S(\one_\Omega)=\one_K}$ at $\spb(A_p)$; here ${S:L^p(\Omega,\bbC)\to L^p(\Omega,\bbC)}$ is defined as $Sf = \one_K f$ for all $f\in L^p(\Omega,\bbC)$ and for a suitable compact set $K$ and $T:\{f\restrict{K}:f\in L^p(\Omega,\bbC)\}\to L^p(\Omega,\bbC)$ is the extension by zero operator. In particular, if we start with an arbitrary positive function $f$ supported on $K$, then $R(\lambda,A_p)f(y)>0$ for every $y\in K$ and for all $\lambda$ on some right neighbourhood of $\spb(A_p)$. That is, we get eventual positivity of the resolvent (locally) on a compact set. In this way, the operators $S$ and $T$ in the above definition are related to the intuitive notion of \emph{local eventual positivity}. 

While eventual positivity of the resolvent is concerned with the behaviour on a small interval to the right of a spectral value, analogous to~\cite[Definition~4.2]{DanersGlueckKennedy2016b}, we also define eventual negativity of the resolvent in our setting which has to do with an interval on the left of a spectral value. This will help us state a local uniform anti-maximum principle in Theorem~\ref{thm:local-anti-maximum}. 

\begin{definition}
	Let Assumptions~\ref{ass:standing-assumptions} be fulfilled and $\lambda_0$ be either $\infty$ or a real spectral value of $A$.
	\begin{enumerate}[\upshape (a)]
	\item We say $S \Res(\argument,A) T$ is \emph{individually eventually strongly negative with respect to $Su$ at $\lambda_0$} if for every $0<f \in E$, there exists $\lambda_1<\lambda_0$ with the following properties:~${[\lambda_1,\lambda_0)\subseteq \rho(A)}$ and $S\Res(\lambda, A)Tf\ll_{Su} 0$ for all $\lambda \in [\lambda_1,\lambda_0)$.
	
	\item We say $S \Res(\argument,A) T$ is \emph{uniformly eventually strongly negative with respect to $Su$ at $\lambda_0$} if  there exists $\lambda_1<\lambda_0$ such that ${[\lambda_1,\lambda_0)\subseteq \rho(A)}$ and the operator ${S\Res(\lambda, A)T\ll_{Su} 0}$ for all $\lambda \in [\lambda_1,\lambda_0)$.
	\end{enumerate}
\end{definition}

Lastly, we define local eventual positivity of the semigroup. 

\begin{definition}
	Let Assumptions~\ref{ass:standing-assumptions} be fulfilled and in addition, let $A$ generate a $C_0$-semigroup on $F$.
	\begin{enumerate}[\upshape (a)]
	\item The family $(S e^{tA} T)_{t\geq 0}$ is called \emph{individually eventually strongly positive with respect to $Su$} if for every $0<f \in E$, there exists $t_0\geq 0$ such that $Se^{tA}Tf\gg_{Su} 0$ for all $t\geq t_0$.
	\item The family $(S e^{tA} T)_{t\geq 0}$ is called \emph{uniformly eventually strongly positive with respect to $Su$} if there exists $t_0\geq 0$ such that $Se^{tA}T\gg_{Su} 0$ for all $t\geq t_0$.
	\end{enumerate}
\end{definition}

We note that if, in particular, $E=F=G$ and $S$ and $T$ are the identity operator, then the above definitions reduce to the corresponding eventual positivity concepts presented in~\cite{DanersGlueckKennedy2016b}.

\section{Sufficient conditions for individual local eventual positivity}
\label{section:sufficient-conditions-individual}

In this section, we prove criteria sufficient for individual local eventual positivity of the semigroup and the resolvent. In order to develop characterizations for individual eventual positivity (under appropriate technical assumptions), the approach followed in both~\cite{DanersGlueckKennedy2016a} and~\cite{DanersGlueckKennedy2016b}, was to first obtain equivalent conditions for the spectral projection associated to a pole to be strongly positive. In the same way, we start by stating the conditions that allow the spectral projection associated to a \emph{simple} pole to be locally strongly positive.

\begin{proposition}
	\label{prop:sufficient-projection}
	Let Assumptions~\ref{ass:standing-assumptions} be fulfilled. Suppose $\lambda_0 \in \bbR$ is a spectral value of $A$ and a simple pole of the resolvent $\Res(\argument,A)$ such that the eigenspace ${\ker(\lambda_0-A)}$ is one-dimensional. Let $x$ be a non-zero vector in $\ker(\lambda_0-A)$ and $\psi$ be a non-zero functional in the dual eigenspace $\ker(\lambda_0-A')$. Denote by $P$ the spectral projection associated to $\lambda_0$.
	
	 If $Sx \gg_{Su} 0$ and $T'\psi \gg_{T'\phi} 0$, then there exists a constant $c>0$ such that ${SPT \geq c\, Su \otimes T'\phi}$. In particular, $SPT$ is strongly positive with respect to $Su$.
\end{proposition}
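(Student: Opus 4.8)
The plan is to use that $\lambda_0$ is a simple pole to pin down the spectral projection $P$ as a rank-one operator, reduce $SPT$ to an explicit rank-one operator built from $Sx$ and $T'\psi$, and then read the asserted inequality off directly from the two strong-positivity hypotheses.

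First I would record the structure of $P$. Since $\lambda_0$ is a simple pole of $\Res(\argument,A)$, we have $\Ima P = \ker(\lambda_0-A) = \linSpan\{x\}$, so $P$ has rank one; as $x \in \Ima P$ and $P^2 = P$ we get $Px = x$, hence $P = x \otimes \eta$ for a unique $\eta \in F'$ with $\duality{\eta}{x} = 1$. Dualizing, $P'$ is the rank-one operator $g \mapsto \duality{g}{x}\,\eta$, whose range is $\linSpan\{\eta\}$; since $\Ima P' = \ker(\lambda_0-A')$ is one-dimensional (having the same rank as $P$) and contains $\psi$, the functional $\eta$ is a nonzero scalar multiple of $\psi$, and pairing with $x$ gives $\duality{\psi}{x} \neq 0$ and $\eta = \psi/\duality{\psi}{x}$. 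Thus
\[
	P = \frac{1}{\duality{\psi}{x}}\, x \otimes \psi .
\]

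Next, for $f \in E$ one computes $P(Tf) = \duality{\psi}{Tf}\,\duality{\psi}{x}^{-1}\,x = \duality{T'\psi}{f}\,\duality{\psi}{x}^{-1}\,x$, and applying $S$ yields the rank-one operator $SPT = \duality{\psi}{x}^{-1}\,(Sx)\otimes(T'\psi) \in \calL(E,G)$. By the hypotheses there are $c_1,c_2>0$ with $Sx \geq c_1\,Su$ and $T'\psi \geq c_2\,T'\phi$; in particular $Sx \geq 0$ and $T'\psi \geq 0$. Assuming $\duality{\psi}{x}>0$ (see below), for every $0 < f \in E$ strict positivity of $T'\phi$ gives $\duality{T'\phi}{f}>0$, and
\[
	SPTf = \frac{\duality{T'\psi}{f}}{\duality{\psi}{x}}\, Sx \;\geq\; \frac{c_2\,\duality{T'\phi}{f}}{\duality{\psi}{x}}\, Sx \;\geq\; \frac{c_1 c_2\,\duality{T'\phi}{f}}{\duality{\psi}{x}}\, Su ,
\]
where the first step uses $\duality{T'\psi}{f} \geq c_2\,\duality{T'\phi}{f}$ together with $Sx \geq 0$, and the second uses $Sx \geq c_1\,Su$ and nonnegativity of the scalar in front. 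With $c := c_1 c_2/\duality{\psi}{x} > 0$ this reads $SPTf \geq c\,(Su\otimes T'\phi)f$ for all $0<f$, hence for all $f \geq 0$, i.e. $SPT \geq c\, Su\otimes T'\phi$; and since $c\,\duality{T'\phi}{f}>0$ when $0<f$, the same inequality shows $SPTf \gg_{Su} 0$, so $SPT$ is strongly positive with respect to $Su$.

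The main obstacle is the point I flagged: one must know that the scalar $\duality{\psi}{x}$ is \emph{positive}, i.e. that the rank-one projection $P=(x\otimes\psi)/\duality{\psi}{x}$ is oriented compatibly with the two positivity conditions — from $Sx\gg_{Su}0$ and $T'\psi\gg_{T'\phi}0$ alone one only obtains $\duality{\psi}{x}\neq 0$. In the classical global situation ($S=T=\id$, $u$ quasi-interior) this is automatic, since there $x$ may be taken positive and $\psi$ strictly positive; in the present generality it is ensured by the way $x$ and $\psi$ are normalized (e.g. $\duality{\psi}{x}=1$), so that the strong-positivity hypotheses are imposed on the correctly signed eigenvectors. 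Everything else is a routine rank-one computation, the only real care being to track which of $E$, $F$, $G$ each object lives in — for which the diagram following Assumptions~\ref{ass:standing-assumptions} is the bookkeeping aid.
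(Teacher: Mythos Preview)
Your argument is essentially the paper's: both identify $P$ as the rank-one operator $P=\duality{\psi}{x}^{-1}\,x\otimes\psi$ (the paper phrases this as ``rescaling $x$ such that $\duality{\psi}{x}=1$'') and then obtain $SPT=\duality{\psi}{x}^{-1}\,(Sx)\otimes(T'\psi)\ge c\,(Su)\otimes(T'\phi)$ directly from the two strong-positivity hypotheses.

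The sign concern you raise is genuine, and the paper's proof shares it: rescaling $x$ so that $\duality{\psi}{x}=1$ and then still invoking $Sx\gg_{Su}0$ for the rescaled vector is legitimate only if the original $\duality{\psi}{x}$ is a positive real. Without that the conclusion can actually fail. Take $E=G=\bbC$, $F=\bbC^2$, $A=\bigl(\begin{smallmatrix}1&1\\2&2\end{smallmatrix}\bigr)$, $\lambda_0=0$, $S(a,b)=a$, $T\alpha=(0,\alpha)$, $u=e_1$, $\phi=e_2^*$, $x=(1,-1)$, $\psi=(-2,1)$: Assumptions~\ref{ass:standing-assumptions} and all hypotheses of the Proposition are satisfied (one has $Sx=Su=1$ and $T'\psi=T'\phi$), yet $\duality{\psi}{x}=-3$ and a direct computation gives $SPT\alpha=-\alpha/3$, so $SPT$ is not even positive. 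The fix is exactly the one you gesture at: add the normalization $\duality{\psi}{x}>0$ (always achievable by replacing one of $x,\psi$ by its negative before imposing the positivity hypotheses), after which both your argument and the paper's go through verbatim. In every application in the paper this normalization is automatic (e.g.\ self-adjoint cases with $\psi=x$), which is presumably why the point was not made explicit.
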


\begin{proof}
	As the spectral value $\lambda_0$ is a simple pole of the resolvent $\Res(\argument,A)$, we have $\Ima P=\ker(\lambda_0-A)$ and $\Ima P'=\ker(\lambda_0-A')$. Rescaling $x$ such that $\duality{\psi}{x}=1$ gives $P = x \otimes \psi$. Finally, since $Sx \gg_{Su} 0$ and $T'\psi \gg_{T'\phi} 0$, there exist $c_1,c_2>0$ such that $Sx \geq c_1 Su$ and $T'\psi \geq c_2 T'\phi$. This yields
	\[
		SPT=Sx\otimes T'\psi\geq (c_1 c_2)\, Su \otimes T'\phi.
	\]
	Taking $c=c_1c_2$ and keeping in mind the strict positivity of $T'\phi \in E'$ completes the proof.
\end{proof}

If $v$ is a quasi-interior point of $F$, then an important condition which was imposed in the characterization of individual eventual positivity (and negativity) of the resolvent $\Res(\argument ,A)$ in~\cite[Theorem~4.4]{DanersGlueckKennedy2016b} is the \emph{domination condition}
\[
	\dom{A} \subseteq F_v.
\]
This condition was  analysed in~\cite{DanersGlueck2017}, where it was elucidated that certain implications of~\cite[Theorem~4.4]{DanersGlueckKennedy2016b} remain true even without the domination condition. However, the domination condition cannot be completely dropped~\cite[Example~5.4]{DanersGlueckKennedy2016b}. Nevertheless, from an application point of view, this condition is not too restrictive. For instance, it always holds on the space of continuous (complex-valued) functions on a compact Hausdorff space, equipped with the supremum norm. More concrete examples where such a condition is satisfied can be found in~\cite[Example~4.5 and Section~6]{DanersGlueckKennedy2016b}. In the following theorem, we impose a \emph{local} version of the domination condition to obtain local eventual positivity of the resolvent at a \emph{simple} pole. As a matter of fact,  we show that as in~\cite[Theorem~4.4]{DanersGlueckKennedy2016b}, our conditions are also sufficient to guarantee local eventual negativity of the resolvent.

\begin{theorem}
	\label{thm:sufficient-individual-resolvent}
	Let Assumptions~\ref{ass:standing-assumptions} hold. Suppose $\lambda_0 \in \bbR$ is a spectral value of $A$ and a simple pole of the resolvent $\Res(\argument,A)$ such that the eigenspace $\ker(\lambda_0-A)$ is one-dimensional. Let $x$ be a non-zero vector in $\ker(\lambda_0-A)$ and $\psi$ be a non-zero functional in the dual eigenspace $\ker(\lambda_0-A')$. Moreover, assume $Sx \gg_{Su} 0$ and $T'\psi \gg_{T'\phi} 0$. 
	
	If $S\dom{A}\subseteq G_{Su}$, then for each $0<f\in E$, there exists an open interval $U$ containing $\lambda_0$ and $c>0$ such that $U\setminus \{\lambda_0\}\subseteq \rho(A)$ and $(\lambda-\lambda_0)S\Res(\lambda,A)Tf\geq c S u$ for all $\lambda \in U\setminus \{\lambda_0\}$. In particular, $S\Res(\argument,A)T$ is individually eventually strongly positive and negative  with respect to $Su$ at $\lambda_0$.
\end{theorem}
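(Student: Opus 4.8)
The plan is to combine the Laurent expansion of $\Res(\argument,A)$ at the simple pole $\lambda_0$ with Proposition~\ref{prop:sufficient-projection}. Since $\lambda_0$ is a simple pole, it is an isolated point of $\spec(A)$, so there is $\delta>0$ with $\{\lambda\in\bbC: 0<\modulus{\lambda-\lambda_0}<\delta\}\subseteq\rho(A)$, and $(\lambda-\lambda_0)\Res(\lambda,A)\to P$ in $\calL(F)$ as $\lambda\to\lambda_0$, where $P$ is the spectral projection associated to $\lambda_0$. Proposition~\ref{prop:sufficient-projection} yields $c_0>0$ with $SPT\geq c_0\,Su\otimes T'\phi$; hence, for a fixed $0<f\in E$, applying both sides to $f$ gives $SPTf\geq\alpha\,Su$ where $\alpha:=c_0\duality{T'\phi}{f}$, which is strictly positive because $T'\phi$ is a strictly positive functional and $f>0$. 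Everything in sight is real (the operators $A$, $S$, $T$ and the parameter $\lambda$), so for real $\lambda$ the vectors occurring below lie in the real parts of the respective lattices and the order relations are meaningful.

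The key point is to upgrade $\lambda\mapsto(\lambda-\lambda_0)S\Res(\lambda,A)T$ to an $\calL(E,G_{Su})$-valued map that is continuous at $\lambda_0$ with value $SPT$; this is where the local domination hypothesis $S\dom{A}\subseteq G_{Su}$ enters, and I expect it to be the main obstacle. First, since $\Res(\lambda,A)$ maps $F$ into $\dom{A}$ and $\Ima P=\ker(\lambda_0-A)\subseteq\dom{A}$, the principal part of the Laurent expansion of $\Res(\argument,A)$ is the same whether the resolvent is viewed as $\calL(F)$-valued or as $\calL(F,\dom{A})$-valued (the inclusion $\dom{A}\hookrightarrow F$ being continuous and injective, it commutes with the Laurent coefficient integrals and is injective); consequently $\lambda\mapsto(\lambda-\lambda_0)\Res(\lambda,A)$ is holomorphic near $\lambda_0$ into $\calL(F,\dom{A})$ with value $P$ at $\lambda_0$. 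Second, the linear map $S\colon\dom{A}\to G_{Su}$ is bounded: by hypothesis it is well defined, and its graph is closed since $h_n\to h$ in $\dom{A}$ and $Sh_n\to g$ in $G_{Su}$ force $h_n\to h$ in $F$ and $Sh_n\to g$ in $G$ (both embeddings are continuous), whence $g=Sh$; the closed graph theorem applies as both spaces are Banach. Composing with the fixed operators $T\in\calL(E,F)$ and $S\in\calL(\dom{A},G_{Su})$ shows that $\lambda\mapsto(\lambda-\lambda_0)S\Res(\lambda,A)T$ is holomorphic near $\lambda_0$ as an $\calL(E,G_{Su})$-valued map with value $SPT$ at $\lambda_0$. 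In particular $(\lambda-\lambda_0)S\Res(\lambda,A)Tf\to SPTf$ in the gauge norm $\norm{\argument}_{Su}$ as $\lambda\to\lambda_0$.

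It remains to assemble the conclusion. Choose an open interval $U\ni\lambda_0$ with $U\setminus\{\lambda_0\}\subseteq\rho(A)$ and small enough that $\norm{(\lambda-\lambda_0)S\Res(\lambda,A)Tf-SPTf}_{Su}\leq\alpha/2$ for all $\lambda\in U$. Since the infimum defining $\norm{\argument}_{Su}$ is attained (the positive cone of $G$ being closed), this gives $\modulus{(\lambda-\lambda_0)S\Res(\lambda,A)Tf-SPTf}\leq(\alpha/2)Su$, hence, using $SPTf\geq\alpha\,Su$,
\[
	(\lambda-\lambda_0)S\Res(\lambda,A)Tf\;\geq\;SPTf-(\alpha/2)Su\;\geq\;(\alpha/2)Su \qquad (\lambda\in U\setminus\{\lambda_0\}),
\]
which is the asserted inequality with $c:=\alpha/2$. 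Dividing by $\lambda-\lambda_0$: for $\lambda\in U$ with $\lambda>\lambda_0$ we get $S\Res(\lambda,A)Tf\geq\tfrac{c}{\lambda-\lambda_0}Su$, so $S\Res(\lambda,A)Tf\gg_{Su}0$; for $\lambda\in U$ with $\lambda<\lambda_0$ the sign of $\lambda-\lambda_0$ flips the inequality and $S\Res(\lambda,A)Tf\leq\tfrac{c}{\lambda-\lambda_0}Su$, so $S\Res(\lambda,A)Tf\ll_{Su}0$. Taking $\lambda_1$ in the appropriate one-sided part of $U$ establishes individual eventual strong positivity, respectively negativity, of $S\Res(\argument,A)T$ with respect to $Su$ at $\lambda_0$.
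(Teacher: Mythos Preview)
Your proof is correct and follows essentially the same approach as the paper: show $(\lambda-\lambda_0)S\Res(\lambda,A)T\to SPT$ in $\calL(E,G_{Su})$ using the closed graph theorem together with $S\dom{A}\subseteq G_{Su}$, then combine with Proposition~\ref{prop:sufficient-projection} and the gauge-norm estimate. The only cosmetic differences are that the paper upgrades the convergence to $\calL(F,\dom{A})$ via the resolvent equation (writing $\lambda\Res(\lambda,A)=\lambda\Res(\mu,A)+(\mu-\lambda)\Res(\mu,A)\,\lambda\Res(\lambda,A)$ for a fixed $\mu\in\rho(A)$) rather than your Laurent-coefficient argument, and treats the two sides of $\lambda_0$ separately by reapplying the positive-side reasoning to $-A$ instead of handling the punctured interval in one step.
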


We point out that the constant $c>0$ in the above theorem is dependent on $f\in E$ but not on $\lambda \in U$.

\begin{proof}[Proof of Theorem~\ref{thm:sufficient-individual-resolvent}]
	Replacing $A$ by $A-\lambda_0$ if necessary, we may assume without loss of generality that $\lambda_0=0$. Let $P$ denote the spectral projection corresponding to $0$. Since $0$ is a simple pole of the resolvent $\Res(\argument,A)$, we have ${\Ima P=\ker A}$ and ${\lim_{\lambda\downarrow 0} \lambda \Res(\lambda,A) = P}$ in $\calL(F)$. Fix $\mu \in \rho(A)$, then ${\Res(\mu,A)}$ is a bounded operator from $F$ to $\dom{A}$ and $\mu\Res(\mu,A)P=P$. By the resolvent equation, we get
	\begin{align*}
		\lambda  \Res(\lambda,A) &=\lambda  \Res(\mu,A) + (\mu-\lambda)\Res(\mu,A)\lambda \Res(\lambda,A)\\
				& \to \mu  \Res(\mu,A)P=P
	\end{align*}
	in $\calL(F,\dom{A})$ as $\lambda\downarrow 0$. Therefore $\lambda S\Res(\lambda,A)T\to SPT$ in $\calL(E,S\dom{A})$ as $\lambda\downarrow 0$. Because $S\dom{A}\subseteq G_{Su}$, we have $S\in \calL(\dom{A},G_{Su})$ due to the closed graph theorem. This implies that the above convergence holds in $\calL(E,G_{Su})$ as well. 
	
	Next, let $0<f \in E$. Then by Proposition~\ref{prop:sufficient-projection}, there exists $c>0$ such that $SPTf \geq cSu$. Since the operators $S,T$, and $A$ are real, so is the vector $\lambda S\Res(\lambda,A)Tf$ for every $\lambda \in \rho(A)\cap \bbR$. Hence using the above convergence, there exists $\delta>0$ such that
	\[
		\norm{\lambda S\Res(\lambda,A)Tf-SPTf}_{Su} <\frac{c}{2} \text{ for all } 0<\lambda<\delta.
	\]
	Thus for $\lambda \in (0,\delta)$, we obtain
	\[
		\modulus{\lambda S\Res(\lambda,A)Tf-SPTf} \leq \norm{\lambda S\Res(\lambda,A)Tf-SPTf}_{Su} Su < \frac{c}{2} Su 
	\]
	and so
	\[
		\lambda S\Res(\lambda,A)Tf>SPTf- \frac{c}{2} Su \geq c Su -\frac{c}{2} Su =\frac{c}{2} Su.
	\]
	
	Finally, note that that $0$ is also an eigenvalue of $-A$ and $P$ is the corresponding spectral projection. Therefore, applying the above arguments to the operator $-A$ ensures the existence of $\delta'>0$ such that $\lambda S\Res(\lambda,-A)Tf\geq \frac{c}{2} S u$ for all $\lambda \in (0,\delta')$. Taking $U=(-\delta',\delta)$ and replacing $c/2$ by $c$ makes it possible to conclude that $\lambda S\Res(\lambda,A)Tf\geq c S u$ for all $\lambda \in U\setminus \{0\}$. In particular, $S\Res(\argument,A)T$ is individually eventually strongly positive and negative  with respect to $Su$ at $0$.
\end{proof}

It was proved (under certain technical assumptions) in~\cite[Theorem~5.2]{DanersGlueckKennedy2016b} that if $A$ generates a $C_0$-semigroup on a complex Banach lattice $F$, then  individual eventual strong positivity of $(e^{tA})_{t \geq 0}$  with respect to a quasi-interior point $v\in F$ is equivalent to the strong convergence of the (rescaled) semigroup operators to $P\gg_v 0$ as $t\to \infty$ (here $P$ is the spectral projection associated to the spectral bound $\spb(A)$). In comparison to the case of resolvent -- instead of a \emph{domination} condition (i.e., $D(A) \subseteq F_v$) -- this equivalence was proved under a so-called \emph{smoothing} assumption 
\[
	\exists\ t_0 > 0 \text{ such that }  e^{t_0 A} F \subseteq  F_{v}.
\]
As before, in view of~\cite[Example~5.4]{DanersGlueckKennedy2016b} this condition cannot entirely be removed (although a certain part of the theorem remains true even if drop the condition~\cite[Theorem~5.1]{DanersGlueck2017}). Taking cue from this, we a priori assume a version of the smoothing condition in our setting and that the (rescaled) semigroup operators converge in the strong operator topology as $t \to \infty$. This helps us adapt the  proof of~\cite[Theorem~5.2]{DanersGlueckKennedy2016b} to get a sufficient criterion for individual local eventual positivity of the semigroup. However, unlike~\cite[Theorem~5.2]{DanersGlueckKennedy2016b}, we do not impose any restrictions on the peripheral spectrum of $A$.

As was true for the domination condition, the smoothing condition is trivially true on the space of continuous (complex-valued) functions on a compact Hausdorff space, equipped with the supremum norm. We will see more examples of when this condition holds in Section~\ref{section:applications}.  Even more situations (albeit with $S$ and $T$ as the identity operators) where such a condition remains true are discussed in~\cite[Remark~3.3]{DanersGlueck2018b} and~\cite[Remark~9.3.4]{Glueck2016}, while concrete examples can be found in~\cite[Section~6]{DanersGlueckKennedy2016b} and~\cite[Section~4]{DanersGlueck2018b}.

\begin{theorem}
	\label{thm:sufficient-individual-semigroup-resolvent}
	Under Assumptions~\ref{ass:standing-assumptions}, let $A$ generate a $C_0$-semigroup $(e^{tA})_{t \geq 0}$ on $F$. In addition, assume $\spb(A)>-\infty$ and that the rescaled semigroup $\left(e^{t(A-\spb(A))}\right)_{t \geq 0}$ converges in the strong operator topology as $t\to \infty$. Suppose that the spectral bound $\spb(A)$ is an eigenvalue of $A$ such that the corresponding eigenspace ${\ker(\spb(A)-A)}$ is one-dimensional. Let $x$ be a non-zero vector in $\ker(\spb(A)-A)$ and $\psi$ be a non-zero functional in the dual eigenspace $\ker(\spb(A)-A')$. Further, assume $Sx \gg_{Su} 0$ and $T'\psi \gg_{T'\phi} 0$. Then the following assertions are true.
	\begin{enumerate}[ref=(\roman*)]
		\item\label{thm:sufficient-individual:semigroup} If there exists $t_0 >0$ such that $S e^{t_0A} F \subseteq G_{Su}$, then for each ${0<f\in E}$, there exists $t_1\geq 0$ and $c>0$ such that $Se^{tA}Tf\geq c Su$ for all ${t\geq t_1}$. In particular, the family $(S e^{tA} T)_{t\geq 0}$ is individually eventually strongly positive with respect to $Su$.
		
		\item\label{thm:sufficient-individual:resolvent} If $S\dom{A} \subseteq G_{Su}$, then $S\Res(\argument,A)T$ is individually eventually strongly positive with respect to $Su$ at $\spb(A)$.
	\end{enumerate}
\end{theorem}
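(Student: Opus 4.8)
The plan is to run the arguments of Proposition~\ref{prop:sufficient-projection} and Theorem~\ref{thm:sufficient-individual-resolvent}, with the role of the spectral projection now played by the strong limit of the rescaled semigroup. I may assume $\spb(A)=0$ (replace $A$ by $A-\spb(A)$; this preserves all the hypotheses and, as $e^{-t\spb(A)}>0$, transfers the conclusions back). So $(e^{tA})_{t\ge 0}$ converges strongly as $t\to\infty$; write $Q$ for its limit. The first and principal step is to identify $Q$. From $e^{sA}Q=\lim_t e^{(s+t)A}=Q=Qe^{sA}$ for all $s\ge 0$, the operator $Q$ is a bounded projection whose range consists of fixed vectors of the semigroup, i.e.\ $\Ima Q\subseteq\ker A$; since $Qx=x\ne 0$ and $\ker A$ is one-dimensional, $Q$ is the rank-one operator $x\otimes\psi_0$ for some $\psi_0\in F'$ with $\duality{\psi_0}{x}=1$. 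Dualizing $Qe^{sA}=Q$ gives $(e^{sA})'\psi_0=\psi_0$ for every $s\ge 0$, so $\psi_0\in\ker A'$; and for arbitrary $\chi\in\ker A'$ one has $\duality{\chi}{f}=\duality{\chi}{e^{sA}f}\to\duality{\chi}{Qf}=\duality{\chi}{x}\duality{\psi_0}{f}$ as $s\to\infty$, whence $\chi=\duality{\chi}{x}\psi_0$. Thus $\ker A'$ is one-dimensional, $\psi$ is a nonzero scalar multiple of $\psi_0$, and — normalizing $\duality{\psi}{x}=1$ as in Proposition~\ref{prop:sufficient-projection} — we may take $Q=x\otimes\psi$. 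Note also that $Sx\in G_{Su}$: under~\ref{thm:sufficient-individual:resolvent} because $x\in\dom A$, and under~\ref{thm:sufficient-individual:semigroup} because $Sx=Se^{t_0A}x\in Se^{t_0A}F\subseteq G_{Su}$.

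For~\ref{thm:sufficient-individual:semigroup}, the smoothing hypothesis and the closed graph theorem give $Se^{t_0A}\in\calL(F,G_{Su})$. Factoring $Se^{tA}=(Se^{t_0A})e^{(t-t_0)A}$ for $t\ge t_0$ and using $e^{(t-t_0)A}\to Q$ strongly on $F$, I get, for each $f\in F$, that $Se^{tA}f\to(Se^{t_0A})Qf=\duality{\psi}{f}\,Sx$ in the norm of $G_{Su}$ (here $e^{t_0A}x=x$). Applying this with $Tf$ in place of $f$ and using $Sx\ge c_1 Su$, $T'\psi\ge c_2 T'\phi$ with $c_1,c_2>0$, the limit $\duality{T'\psi}{f}Sx$ dominates $c_1 c_2\duality{T'\phi}{f}\,Su$, which is a strictly positive multiple of $Su$ whenever $f>0$ (strict positivity of $T'\phi$). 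Convergence in the gauge norm $\norm{\argument}_{Su}$ then forces $Se^{tA}Tf\ge c\,Su$ for all large $t$, with $c=c(f)>0$ — exactly the estimate carried out in the proof of Theorem~\ref{thm:sufficient-individual-resolvent} with $\lambda S\Res(\lambda,A)Tf$ replaced by $Se^{tA}Tf$. Hence $(Se^{tA}T)_{t\ge 0}$ is individually eventually strongly positive with respect to $Su$.

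For~\ref{thm:sufficient-individual:resolvent}, since a convergent semigroup is bounded, an Abelian theorem gives, for real $\lambda\downarrow 0$,
\[
	\lambda\Res(\lambda,A)f=\int_0^\infty e^{-s}\,e^{(s/\lambda)A}f\dx s\ \longrightarrow\ \int_0^\infty e^{-s}\,Qf\dx s=Qf
\]
by dominated convergence, i.e.\ $\lambda\Res(\lambda,A)\to Q$ strongly in $\calL(F)$. As $AQ=0$ and $\Ima Q=\ker A$, one has $\mu\Res(\mu,A)Q=Q$ for $\mu\in\rho(A)$, so plugging this and the convergence above into the resolvent identity $\lambda\Res(\lambda,A)=\lambda\Res(\mu,A)+(\mu-\lambda)\Res(\mu,A)\,\lambda\Res(\lambda,A)$ upgrades the convergence to $\lambda\Res(\lambda,A)\to Q$ in $\calL(F,\dom A)$, exactly as in the proof of Theorem~\ref{thm:sufficient-individual-resolvent}. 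Composing with $T$ on the right and with $S\in\calL(\dom A,G_{Su})$ (closed graph, using $S\dom A\subseteq G_{Su}$) on the left gives $\lambda S\Res(\lambda,A)T\to SQT$ in $\calL(E,G_{Su})$ as $\lambda\downarrow 0$. For $0<f\in E$ we have $SQTf=\duality{T'\psi}{f}Sx\ge c_1 c_2\duality{T'\phi}{f}\,Su$, so the gauge-norm convergence yields $\lambda S\Res(\lambda,A)Tf\ge c\,Su$ on some right neighbourhood $(0,\lambda_1]$ of $0$; since $(0,\infty)\subseteq\rho(A)$, dividing by $\lambda>0$ shows $S\Res(\lambda,A)Tf\gg_{Su}0$ there, which is the claimed individual eventual strong positivity of $S\Res(\argument,A)T$ at $\spb(A)$.

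The substantive work is entirely in the first paragraph — showing that the strong limit $Q$ of the (rescaled) semigroup is the rank-one projection $x\otimes\psi$ built from the leading eigenvector and dual eigenvector, \emph{without} assuming $\spb(A)$ to be a pole of $\Res(\argument,A)$; this replaces the identities $\Ima P=\ker(\lambda_0-A)$, $\Ima P'=\ker(\lambda_0-A')$ used in Proposition~\ref{prop:sufficient-projection}. Once $Q=x\otimes\psi$ is in hand, both parts are faithful adaptations of the resolvent argument of Theorem~\ref{thm:sufficient-individual-resolvent} (and, for~\ref{thm:sufficient-individual:semigroup}, of the semigroup argument of \cite[Theorem~5.2]{DanersGlueckKennedy2016b}); the only delicate point is the implicit sign normalization $\duality{\psi}{x}>0$, needed — as it already is in Proposition~\ref{prop:sufficient-projection} — for the two positivity hypotheses $Sx\gg_{Su}0$ and $T'\psi\gg_{T'\phi}0$ to be compatible with the choice $\duality{\psi}{x}=1$.
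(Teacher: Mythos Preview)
Your proof is correct and follows the same strategy as the paper: identify the strong limit of the rescaled semigroup as the rank-one projection $x\otimes\psi$, then run the gauge-norm argument of Theorem~\ref{thm:sufficient-individual-resolvent} (for part~\ref{thm:sufficient-individual:semigroup} via the smoothing factorisation $Se^{tA}=Se^{t_0A}\,e^{(t-t_0)A}$, for part~\ref{thm:sufficient-individual:resolvent} via the Abelian theorem and the resolvent identity). The only difference is in the first step: the paper invokes \cite[Corollary~4.3.2 and Proposition~4.3.4(a)]{ArendtBattyHieberNeubrander2011} to obtain $\Ima P=\ker A$ and that $\ker A$ separates $\ker A'$, then uses $\dim P=\dim P'$ to get $\Ima P'=\ker A'$; you instead derive $\Ima Q=\ker A$ and $\dim\ker A'=1$ directly from $e^{sA}Q=Q=Qe^{sA}$ and a short duality computation. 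This is a more elementary and self-contained substitute for the cited ergodic-theory facts, but not a different route. Your remark on the sign normalization $\duality{\psi}{x}>0$ is apt---the paper's ``rescaling $x$ such that $\duality{\psi}{x}=1$'' makes the same tacit assumption.
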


We emphasize here that the constant $c>0$ in \ref{thm:sufficient-individual:semigroup} is not dependent on the time $t\geq t_1$.

\begin{proof}[Proof of Theorem~\ref{thm:sufficient-individual-semigroup-resolvent}]
	Without loss of generality, we may assume $\spb(A)=0$. Let
	\[
		Pf:= \lim_{t \to \infty} e^{tA}f \text{ for every } f \in E.
	\]
	Then by~\cite[Corollary~4.3.2 and Proposition~4.3.4(a)]{ArendtBattyHieberNeubrander2011}, $P$ is the projection onto $\ker A$ along $\overline{\Ima A}$ and $\ker A$ separates $\ker A'$. Since $P$ is a projection, we have ${\dim P=\dim P'}$ (see~\cite[Lemma~1.2.6]{Glueck2016} or~\cite[Section~III.6.6]{Kato1980}). Thus $\Ima P =\ker A$ and ${\Ima P'=\ker A'}$. Now rescaling $x$ such that $\duality{\psi}{x}=1$, we have $P = x \otimes \psi$.
	
	\ref{thm:sufficient-individual:semigroup} Suppose there exists $t_0>0$ such that $Se^{t_0A}F\subseteq G_{Su}$, then by the closed graph theorem $Se^{t_0A} \in \calL(F,G_{Su})$. Therefore
	\[
		Se^{tA}T=Se^{t_0A}e^{(t-t_0)A} T \to S e^{t_0A} PT
	\]
	 as $t \to \infty$ in $\calL(E,G_{Su})$. Because $\Ima P =\ker A$, we have $e^{t_0A}P=P$ and so ${Se^{tA}T \to SPT}$ as $t \to \infty$ in $\calL(E,G_{Su})$. 
	
	Let $0<f \in E$. Since the operators $S, T$, and $e^{tA}$ are real, so is the vector $Se^{tA}Tf$ for every $t \geq 0$. By assumption, there exist $c_1,c_2>0$ such that $Sx \geq c_1 Su$ and $T'\psi \geq c_2 T'\phi$. Then $c:=c_1c_2 \duality{T'\phi}{f}>0$, because  $T'\phi$ is strictly positive. Hence $SPTf = (Sx\otimes T'\psi) f\geq c Su$. Moreover, using the above convergence we can find $t_1>0$ such that
	\[
		\norm{Se^{tA}Tf-SPTf}_{Su} < \frac{c}{2} \text{ for all } t \geq t_1.
	\]
	Therefore for $t \geq t_1$,
	\[
		\modulus{Se^{tA}Tf-SPTf}\leq \norm{Se^{tA}Tf-SPTf}_{Su} Su <\frac{c}{2} Su
	\]
	and so
	\[
		Se^{tA}Tf > SPTf-\frac{c}{2} Su \geq c Su -\frac{c}{2} Su = \frac{c}{2} Su.
	\]
	Whence (replacing $c/2$ with $c$) the assertion follows.
	
	\ref{thm:sufficient-individual:resolvent}  The strong convergence of the semigroup operators implies the strong convergence of $\lambda \Res(\lambda,A) \to P$ as $\lambda \downarrow 0$ in by the Laplace transform representation of the resolvent and the Abelian theorem~\cite[Proposition~4.1.2]{ArendtBattyHieberNeubrander2011}. Therefore if $S\dom{A}\subseteq G_{Su}$, then we may conclude that $S\Res(\argument,A)T$ is individually eventually strongly positive with respect to $Su$ at $0$ exactly as in the proof of Theorem~\ref{thm:sufficient-individual-resolvent}.
\end{proof}

\begin{remarks}
	\label{rem:sufficient-individual}
	
	(a) In the above theorem, if we had assumed that $\spb(A)$ is a pole of the resolvent, then the proof becomes slightly shorter:~Using spectral theory, $P$ is the spectral projection associated to $\spb(A)$. Moreover, convergence of semigroup operators implies that the semigroup is bounded and hence one can show that $\spb(A)$ is a simple pole. Then Proposition~\ref{prop:sufficient-projection} yields $SPT \gg_{Su} 0$. The proof can now be repeated to conclude the assertions. 
	
	(b) The strong convergence of $(e^{tA})_{t \geq 0}$ (assuming $\spb(A)=0$) in the above theorem is equivalent to the following condition \cite[Lemma~V.4.4]{EngelNagel2000}:~the semigroup $(e^{tA})_{t \geq 0}$ is bounded and mean ergodic with mean ergodic projection $P$ and $e^{tA} \to 0$ as $t\to \infty$ strongly on $\ker P$.
	
	(c) If $(e^{tA})_{t \geq 0}$ is eventually differentiable, then the assumption -- there exists ${t_0 >0}$ such that $S e^{t_0 A} F \subseteq G_{Su}$ -- in Theorem~\ref{thm:sufficient-individual-semigroup-resolvent}\ref{thm:sufficient-individual:semigroup} is implied by the following condition:~there exists $n\in \bbN$ such that $S\dom{A^n} \subseteq G_{Su}$ (cf.~\cite[Corollary~5.3]{DanersGlueckKennedy2016b}). Indeed, if the semigroup is eventually differentiable, then there exists $t_0 \geq 0$ such that ${e^{t_0 A} F\subseteq \dom{A}}$, which gives $Se^{nt_0 A} F \subseteq S \dom{A^n} \subseteq G_{Su}$.
\end{remarks}

It is instructive to observe that local eventual positivity of the resolvent requires the stronger assumption $S \dom{A} \subseteq G_{Su}$ as compared to (eventually differentiable) semigroups which only ask $S\dom{A^n}\subseteq G_{Su}$ for some $n\in \bbN$. Due to this, we will only be able to show local eventual positivity results about the semigroup generated by the square of the Dirichlet Laplacian in Theorem~\ref{thm:square-dirichlet-laplacian} (see also~\cite[Proposition~6.5 and 6.6]{DanersGlueckKennedy2016b}). When proving local (anti-)maximum principles in Section~\ref{section:sufficient-conditions-uniform}, it will be further highlighted how the (local) eventual behaviour of the resolvent is, in general, more demanding.

We now show that in Theorem~\ref{thm:sufficient-individual-semigroup-resolvent}, we can weaken our requirement of -- strong convergence of the semigroup operators in $F$ -- to -- strong convergence of the semigroup operators  in a small part. The advantage is that, in applications, with the help of Tauberian theorems (for example the ABLV theorem in~\cite[Theorem~2.4]{ArendtBatty88} or~\cite[Theorem~V.2.21]{EngelNagel2000}), the strong convergence in a part of the domain is much easier to show. This will be illustrated in Section~\ref{section:applications}.

\begin{corollary}
	\label{cor:sufficient-individual-semigroup-resolvent}
	Let Assumptions~\ref{ass:standing-assumptions} hold and let $A$ generate a $C_0$-semigroup $(e^{tA})_{t \geq 0}$. Furthermore, assume that the spectral bound $\spb(A)$ is an eigenvalue of $A$ and that the corresponding eigenspace $\ker(\spb(A)-A)$ is one-dimensional. Let $x$ be a non-zero vector in $\ker(\spb(A)-A)$ and $\psi$ be a non-zero functional in the dual eigenspace $\ker(\spb(A)-A')$.
	
	In addition, suppose the restriction semigroup $\left(e^{t(A-\spb(A))}\restrict{\ker \psi}\right)_{t \geq 0}$ converges to $0$ in the strong operator topology as $t \to \infty$, i.e.,
	\[
		\lim_{t \to \infty }e^{t(A-\spb(A))} f =0 \text{ for all } f \in \ker \psi.
	\] 
	Then the rescaled semigroup $\left(e^{t(A-\spb(A))}\right)_{t \geq 0}$ converges strongly as $t\to\infty$. In particular, if $Sx \gg_{Su} 0$ and $T'\psi \gg_{T'\phi} 0$ then both the assertions \ref{thm:sufficient-individual:semigroup} and \ref{thm:sufficient-individual:semigroup} of Theorem~\ref{thm:sufficient-individual-semigroup-resolvent} hold.
\end{corollary}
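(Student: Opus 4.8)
The plan is to show that, after rescaling to $\spb(A)=0$, the rank-one operator $P:=x\otimes\psi$ is a bounded projection commuting with $(e^{tA})_{t\geq 0}$ whose kernel is exactly $\ker\psi$; the desired strong convergence then drops out of the hypothesis, and the final assertion becomes an immediate application of Theorem~\ref{thm:sufficient-individual-semigroup-resolvent}.

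First I would replace $A$ by $A-\spb(A)$, so that $\spb(A)=0$, $Ax=0$, and $A'\psi=0$. The first point to establish is that $e^{tA}$ leaves $\ker\psi$ invariant for every $t\geq 0$. This holds because for $f\in\dom{A}$ one has $\frac{\mathrm{d}}{\mathrm{d}t}\duality{\psi}{e^{tA}f}=\duality{\psi}{Ae^{tA}f}=\duality{A'\psi}{e^{tA}f}=0$, so $\duality{\psi}{e^{tA}f}=\duality{\psi}{f}$, and this identity extends to all $f\in F$ by density of $\dom{A}$ and continuity (equivalently, $(e^{tA})'\psi=\psi$). The second point is that $\duality{\psi}{x}\neq 0$: if it were zero, then $x\in\ker\psi$, yet $e^{tA}x=x$ for all $t$, so the hypothesized strong convergence to $0$ on $\ker\psi$ would force $x=\lim_{t\to\infty}e^{tA}x=0$, contradicting $x\neq 0$. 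After rescaling $x$ we may thus assume $\duality{\psi}{x}=1$.

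Next I would introduce $P:=x\otimes\psi\in\calL(F)$. Because $\duality{\psi}{x}=1$, $P$ is a bounded projection with $\Ima P=\linSpan\{x\}$ and $\ker P=\ker\psi$, and using $e^{tA}x=x$ together with $\duality{\psi}{e^{tA}f}=\duality{\psi}{f}$ one checks $e^{tA}P=P=Pe^{tA}$. For an arbitrary $f\in F$ write $f=Pf+(I-P)f$ with $(I-P)f\in\ker\psi$; then
\[
	e^{tA}f=Pf+e^{tA}(I-P)f\longrightarrow Pf\qquad\text{as }t\to\infty,
\]
since the restriction semigroup converges strongly to $0$ on $\ker\psi$ by assumption. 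Hence $(e^{tA})_{t\geq 0}$ converges strongly (to $P$) as $t\to\infty$, which is the first claim of the corollary.

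For the final assertion, note that $\spb(A)$ is a real number (being an eigenvalue), so $\spb(A)>-\infty$; together with the one-dimensionality of $\ker(\spb(A)-A)$, the prescribed $x$ and $\psi$, the strong convergence of the rescaled semigroup just proved, and the hypotheses $Sx\gg_{Su}0$ and $T'\psi\gg_{T'\phi}0$, all the assumptions of Theorem~\ref{thm:sufficient-individual-semigroup-resolvent} are met, so both assertions \ref{thm:sufficient-individual:semigroup} and \ref{thm:sufficient-individual:resolvent} of that theorem follow. I expect no serious obstacle in this argument; the only step requiring genuine care is the verification that $\duality{\psi}{x}\neq 0$, which is exactly where the decay hypothesis on $\ker\psi$ enters — applied to the fixed vector $x$.
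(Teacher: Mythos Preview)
Your proof is correct and follows essentially the same route as the paper's: both argue that $x\notin\ker\psi$ from the decay hypothesis and $e^{tA}x=x$, then decompose $F=\linSpan\{x\}\oplus\ker\psi$ (the paper writes this as $\ker A\oplus\ker\psi$ using codimension one of $\ker\psi$) and conclude strong convergence from the behaviour on each summand. Your version is slightly more explicit---you verify invariance of $\ker\psi$ and construct the projection $P=x\otimes\psi$ directly---but the substance is identical.
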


\begin{proof}
	We assume without any loss of generality that $\spb(A)=0$. According to Theorem~\ref{thm:sufficient-individual-semigroup-resolvent}, if we are able to show that the semigroup $(e^{tA})_{t \geq 0}$ converges strongly as $t \to \infty$, then the latter assertion holds as well. To prove convergence, first note that since $x$ is a vector in $\ker A$, therefore $e^{tA}x=x$. This, together with the assumptions that $x$ is non-zero and $e^{tA}$ converges strongly to zero on $\ker \psi$ implies that $x\notin \ker \psi$. Also, the co-dimension of $\ker \psi$ is equal to one, so it follows that 
	 \[
	 	F = \ker A \oplus \ker \psi.
	 \]
	 Now, the semigroup acts as the identity operator on $\ker A$ and converges strongly to zero on $\ker \psi$. As a result, $(e^{tA})_{t \geq 0}$ converges in the strong operator topology as $t\to\infty$.
\end{proof}

\section{Sufficient conditions for uniform local eventual positivity}
\label{section:sufficient-conditions-uniform}

Whereas obtaining sufficient conditions for individual local eventual positivity was the content of Section~\ref{section:sufficient-conditions-individual}, we now shift our focus to uniform local eventual positivity. Daners and Gl\"uck proved a criterion sufficient for uniform eventual positivity of the semigroup in~\cite{DanersGlueck2018b}. They showed that, in addition to the assumptions required for individual eventual strong positivity, a similar smoothing condition on the dual semigroup along with strict positivity of an eigenvector of $A'$ is the right setting to get uniform eventual strong positivity. We will follow the same strategy in Theorem~\ref{thm:sufficient-uniform-semigroup} and show that we can obtain uniform local eventual positivity of semigroups that are norm continuous at infinity by adjusting these assumptions in our setting, as well.

On the other hand, the story with resolvents is quite different. First of all, it was shown in~\cite[Propositions~4.2 and 4.3]{DanersGlueckKennedy2016a}, that on the spaces of continuous functions, if we a priori know that the resolvent is positive at one point, then one can obtain uniform eventual positivity of the resolvent. In fact, similar reasoning shows that this result is also true on general Banach lattices \cite[Proposition~3.2]{DanersGlueck2018a}. Unfortunately, the arguments cannot be adapted to the local case. Next, the idea which works for semigroups can also not be used as a blueprint in case of resolvents (that is, assuming a domination condition on both the operator as well as its dual). Indeed, due to a uniform anti-maximum principle given in~\cite[Theorem~1]{GrunauSweers2001}, with ${n=m=k=1}$, it can be seen that one does not obtain uniform eventual strong negativity of the resolvent even though the additional conditions on the dual generator are satisfied. This suggests that one requires even more additional conditions  in the case of resolvents. To this end,  we consider the approach followed by Tak\'a\v{c}, who proved a uniform anti-maximum principle in~\cite[Theorem~5.2]{Takac1996} by imposing a bound on the resolvent at a point. We show that a similar bound in our situation is the appropriate additional assumption to guarantee uniform local eventual negativity of the resolvent in Theorem~\ref{thm:local-anti-maximum} (which we call the \emph{local uniform anti-maximum principle}). As a consequence, we also obtain a \emph{local uniform maximum principle} in Corollary~\ref{cor:local-maximum}.

\begin{theorem}[Local uniform anti-maximum principle]
	\label{thm:local-anti-maximum}
	In addition to Assumptions~\ref{ass:standing-assumptions}, suppose the following hold:
	\begin{enumerate}[\upshape (a)]
		\item The operator $A$ has a real spectral value $\lambda_0$ which is a simple pole of the resolvent $\Res(\argument, A)$.
		
		\item The eigenspace $\ker(\lambda_0-A)$ is one-dimensional. Let $x \in \ker(\lambda_0-A)$ and $\psi \in \ker(\lambda_0-A')$ be non-zero.
		
		\item The domains of $A$ and $A'$ satisfy:~${S\dom{A} \subseteq G_{Su}}$ and ${T'\dom{A'}\subseteq (E')_{T'\phi}}$. 
		
		\item The vector $Sx$ is strongly positive with respect to $Su$ and the functional $T'\psi$ is strongly positive with respect $T'\phi$.
	\end{enumerate}
	
	Denote by $P$ the spectral projection corresponding to $\lambda_0$. If there exist $\lambda_1$ in the resolvent set of $A$ with ${\lambda_1<\lambda_0}$ and $c>0$ such that ${S \Res(\lambda_1,A)T \leq c SPT}$, then there exists $\lambda_2<\lambda_0$ satisfying the following:~$[\lambda_2,\lambda_0)\subseteq \rho(A)$ and for each $0<f\in E$, there exists $c'>0$ such that $(\lambda-\lambda_0) S\Res(\lambda,A)Tf \geq c'Su$ for all $\lambda \in[\lambda_2,\lambda_0)$. In particular, $S\Res(\argument,A)T$ is uniformly eventually strongly negative  with respect to $Su$ at $\lambda_0$.
\end{theorem}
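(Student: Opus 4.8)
The plan is to follow the strategy of Tak\'a\v{c} adapted to the local setting, comparing the resolvent to its principal part near $\lambda_0$ via the Laurent expansion. Without loss of generality assume $\lambda_0 = 0$. Since $0$ is a simple pole, write $\Res(\lambda,A) = \tfrac{1}{\lambda}P + R(\lambda)$ for $\lambda$ near $0$, where $R(\argument)$ is holomorphic at $0$ with $R(0) = \lim_{\lambda\to 0}(\Res(\lambda,A) - \tfrac1\lambda P)$, the reduced resolvent. The first step is to multiply by $\lambda$ and pass to the local ideals: since $S\dom{A}\subseteq G_{Su}$, the closed graph theorem gives $S\in\calL(\dom{A},G_{Su})$, and as in the proof of Theorem~\ref{thm:sufficient-individual-resolvent} we get $\lambda S\Res(\lambda,A)T \to SPT$ in $\calL(E,G_{Su})$ as $\lambda\to 0$, and moreover $\lambda S\Res(\lambda,A)T = SPT + \lambda\,S R(\lambda) T$ with $S R(\lambda) T \to S R(0) T$ in $\calL(E,G_{Su})$. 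By Proposition~\ref{prop:sufficient-projection} there is $c_0 > 0$ with $SPT \geq c_0\, Su\otimes T'\phi$, so on any $0 < f\in E$ we have $SPTf \geq c_0\duality{T'\phi}{f}\,Su$ with $\duality{T'\phi}{f} > 0$. The individual statement would then follow immediately; the difficulty is getting the \emph{uniform} lower bound — a constant $c'$ working for all $0<f$ simultaneously — which is exactly what the resolvent bound hypothesis $S\Res(\lambda_1,A)T \leq c\,SPT$ is there to provide.

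The key idea is to exploit the resolvent identity together with the hypothesised bound to dominate $S R(\lambda)T$ by a multiple of $SPT$ uniformly in $f$, not just in $\lambda$. Starting from the resolvent equation $\Res(\lambda,A) = \Res(\lambda_1,A) + (\lambda_1 - \lambda)\Res(\lambda_1,A)\Res(\lambda,A)$, iterating once more and isolating the principal part, one expresses $S R(\lambda) T$ (for $\lambda\in(\lambda_1,0)$) in terms of $S\Res(\lambda_1,A)T$, the projection $P$, and correction terms that are small in $\calL(E,G_{Su})$ as $\lambda\to 0$. Because $S\Res(\lambda_1,A)T$ is positive (being a resolvent at a point below $\spb(A)$... more carefully, using that $\lambda_1 < \lambda_0 = \spb$ on $\ker\psi$ and the decomposition $F = \ker A\oplus \ker A'^{\perp}$-complement, one controls its sign) and bounded above by $c\,SPT$, the order interval structure gives, for $\lambda$ close to $0$,
\[
	\lambda S\Res(\lambda,A)T = SPT + \lambda S R(\lambda)T \geq SPT - \modulus{\lambda}\,c\,SPT - \modulus{\lambda}\,\varepsilon(\lambda)\,Su\otimes T'\phi,
\]
where $\varepsilon(\lambda)\to 0$; here I use that $SPT$ itself dominates $Su\otimes T'\phi$ from below and that the error operators, being bounded $E\to G_{Su}$, are dominated by a multiple of $Su\otimes T'\phi$ up to their operator norm. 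Hence for $\lambda\in[\lambda_2,0)$ with $\lambda_2$ chosen so that $\modulus{\lambda}(c + \varepsilon(\lambda)) \leq \tfrac12$, we obtain $\lambda S\Res(\lambda,A)Tf \geq \tfrac12 SPTf \geq \tfrac{c_0}{2}\duality{T'\phi}{f}\,Su =: c' Su$ for every $0<f$, and since $\lambda < 0 = \lambda_0$ this reads $(\lambda-\lambda_0)S\Res(\lambda,A)Tf \geq c'Su$, giving uniform eventual strong negativity with respect to $Su$ at $\lambda_0$.

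The main obstacle, and the point where the hypotheses on $A'$ enter, is justifying that the error operators $SR(\lambda)T - SR(0)T$ and the resolvent-identity remainders are genuinely dominated in the \emph{order} sense by $\varepsilon(\lambda)\,Su\otimes T'\phi$ with $\varepsilon(\lambda)\to 0$, rather than merely small in operator norm $\calL(E,G_{Su})$. This is where $T'\dom{A'}\subseteq (E')_{T'\phi}$ is needed: dualising, it forces the relevant operators to map into $(E')_{T'\phi}$, so that their adjoints factor appropriately and one gets a two-sided order sandwich $-\norm{\cdot}\,Su\otimes T'\phi \leq (\text{error})Tf \leq \norm{\cdot}\,Su\otimes T'\phi$ on positive $f$, uniformly in $f$ after normalising by $\duality{T'\phi}{f}$. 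Assembling this requires care with the interplay of the two gauge norms $\norm{\argument}_{Su}$ on $G$ and $\norm{\argument}_{T'\phi}$ on $E'$, and with the fact (used implicitly above) that $\lambda_1 < \spb(A)$ guarantees $S\Res(\lambda_1,A)T$ has the needed positivity on the range of $T$; this last point should be extracted from the simple-pole structure and the one-dimensionality of $\ker(\lambda_0 - A)$ exactly as in the individual case. Once the order estimates are in place, the conclusion and the resulting local uniform maximum principle (Corollary~\ref{cor:local-maximum}) follow by the same bookkeeping applied to $-A$.
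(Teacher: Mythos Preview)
Your general strategy is correct and close to the paper's, but there are two genuine gaps.

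First, the assertion that ``$S\Res(\lambda_1,A)T$ is positive (being a resolvent at a point below $\spb(A)$\ldots)'' is unfounded. Nothing in the hypotheses forces $S\Res(\lambda_1,A)T\geq 0$; you are only given the \emph{upper} bound $S\Res(\lambda_1,A)T\leq c\,SPT$. The paper uses exactly this one-sided inequality for the first-order term: since $\lambda<0$, multiplying preserves the direction and yields $\lambda\,S\Res(\lambda_1,A)T\geq c\lambda\,SPT$. No positivity of the single-resolvent term is ever invoked.

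Second, and more seriously, your claim that the error operators are dominated in the order sense by $\varepsilon(\lambda)\,Su\otimes T'\phi$ requires them to belong to $\calL\bigl(E^{T'\phi},G_{Su}\bigr)$, where $E^{T'\phi}$ is the completion of $E$ under $\norm{f}_{E^{T'\phi}}=\duality{T'\phi}{\modulus{f}}$. This is exactly the right space, and it is how the dual hypothesis $T'\dom{A'}\subseteq (E')_{T'\phi}$ enters (via the extension Lemma~\ref{lem:operator-extension}). But a \emph{single} resolvent factor does not land you there: $\Res(\mu,A)T$ extends to $E^{T'\phi}\to F$ and $S\Res(\lambda,A)$ maps $F\to G_{Su}$, yet $S\Res(\lambda,A)T$ alone is only in $\calL(E^{T'\phi},G)\cap\calL(E,G_{Su})$, not in $\calL(E^{T'\phi},G_{Su})$. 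You need \emph{two} resolvent factors, $S\Res(\lambda,A)\Res(\mu,A)T$, for the composition to sit in $\calL(E^{T'\phi},G_{Su})$. This is why the paper iterates the resolvent equation twice,
\[
\lambda S\Res(\lambda,A)T=\lambda\,S\Res(\lambda_1,A)T+\lambda(\lambda_1-\lambda)\,S\Res(\lambda_1,A)^2T+R_\lambda,
\]
with $R_\lambda:=S(\lambda_1-\lambda)^2\Res(\lambda_1,A)^2\lambda\Res(\lambda,A)T$. The first term is handled by the hypothesis; the second and third now live in $\calL(E^{T'\phi},G_{Su})$, so they admit the two-sided order bound $\pm\,\text{(norm)}\cdot Su\otimes T'\phi$ that you correctly identified as the goal. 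One then shows $R_\lambda\to SPT$ in $\calL(E^{T'\phi},G_{Su})$, giving $R_\lambda>\tfrac12 SPT$ near $0$, and bounds $S\Res(\lambda_1,A)^2T\geq -c''C\,SPT$ from its $\calL(E^{T'\phi},G_{Su})$-norm. Your sketch collapses this structure and thereby loses the mechanism that produces the \emph{uniform} order estimate.
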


The essence of Theorem~\ref{thm:local-anti-maximum} is that if we have a \emph{positive} upper bound for $S \Res(\argument,A) T$ at a point on the left of $\lambda_0$, then we can obtain a \emph{negative} upper bound for $S \Res(\argument,A) T$ in an entire neighbourhood on the left of $\lambda_0$. In addition, the same constant $c'>0$ works for each $\lambda \in[\lambda_2,\lambda_0)$.

The proof of Theorem~\ref{thm:local-anti-maximum} requires the following simple lemma which was proved in~\cite[Proposition~2.1]{DanersGlueck2018b} for the particular case $E=F$.

\begin{lemma}
	\label{lem:operator-extension}
	Let $T: E \to F$ be a bounded linear operator between complex Banach lattices $E$ and $F$. If for some strictly positive functional $\phi \in E'$, the operator $T'$ maps $F'$ into $(E')_{\phi}$, then $T$ can be extended to a bounded linear operator from $E^{\phi}$ to $F$ (denoted again by $T$).
\end{lemma}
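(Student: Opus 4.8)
The plan is to prove the single estimate $\norm{Tf}_F \le M\,\duality{\phi}{\modulus{f}} = M\norm{f}_{E^\phi}$ for all $f \in E$, with a constant $M$ independent of $f$; since the canonical embedding $E \hookrightarrow E^\phi$ has dense range, $T$ then extends uniquely to a bounded operator from $E^\phi$ to $F$ by the standard extension theorem for bounded linear maps between Banach spaces.

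To produce $M$, I would first upgrade the set-theoretic hypothesis that $T'$ maps $F'$ into $(E')_\phi$ to the statement that the corestriction $T' : F' \to (E')_\phi$ is a \emph{bounded} operator. Both $F'$ and the principal ideal $(E')_\phi$ (the latter carrying its gauge norm $\norm{\argument}_\phi$) are Banach spaces, and the inclusion $(E')_\phi \hookrightarrow E'$ is continuous; hence if $\eta_n \to \eta$ in $F'$ and $T'\eta_n \to \zeta$ in $(E')_\phi$, then $T'\eta_n \to \zeta$ in $E'$ while also $T'\eta_n \to T'\eta$ in $E'$ by boundedness of $T' : F' \to E'$, so $\zeta = T'\eta$ and the graph is closed. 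The closed graph theorem then yields $M>0$ with $\norm{T'\eta}_\phi \le M\norm{\eta}_{F'}$ for all $\eta \in F'$, which by the very definition of the gauge norm means $\modulus{T'\eta} \le M\norm{\eta}_{F'}\,\phi$.

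The estimate itself is then a short duality computation: for $f \in E$,
\[
	\norm{Tf}_F = \sup_{\norm{\eta}_{F'}\le 1}\modulus{\duality{\eta}{Tf}} = \sup_{\norm{\eta}_{F'}\le 1}\modulus{\duality{T'\eta}{f}} \le \sup_{\norm{\eta}_{F'}\le 1}\duality{\modulus{T'\eta}}{\modulus{f}} \le M\,\duality{\phi}{\modulus{f}},
\]
where I use in turn that the dual unit ball norms $\norm{\argument}_F$, the definition of the dual operator, the general inequality $\modulus{\duality{\zeta}{g}} \le \duality{\modulus{\zeta}}{\modulus{g}}$ valid on complex Banach lattices, and the bound on $\modulus{T'\eta}$ obtained above. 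This is precisely $\norm{Tf}_F \le M\norm{f}_{E^\phi}$, and the lemma follows after invoking density of $E$ in $E^\phi$.

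I do not expect a genuine obstacle. The two points needing (routine) care are that $(E')_\phi$ equipped with the gauge norm is complete — which follows from closedness of the positive cone of $E'$ together with continuity of the embedding $(E')_\phi \hookrightarrow E'$, and is precisely what makes the closed graph theorem applicable — and the complex-lattice inequality $\modulus{\duality{\zeta}{g}} \le \duality{\modulus{\zeta}}{\modulus{g}}$; both are standard facts about Banach lattices drawn from the references recalled in Section~\ref{section:terminology}.
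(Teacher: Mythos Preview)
Your proof is correct and follows essentially the same approach as the paper: apply the closed graph theorem to obtain $T'\in\calL\bigl(F',(E')_\phi\bigr)$, use the resulting gauge-norm bound $\modulus{T'\eta}\le M\norm{\eta}_{F'}\,\phi$ in the duality computation $\norm{Tf}=\sup_{\norm{\eta}\le 1}\modulus{\duality{T'\eta}{f}}$, and conclude by density. If anything, you spell out the closed-graph verification and the density extension more explicitly than the paper does.
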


\begin{proof}
	Note that $T'\in \calL\left(F',(E')_{\phi}\right)$ due to the closed graph theorem. For ${\psi \in F'}$ with $\norm{\psi}_{F'}\leq 1$, we have ${\modulus{T'\psi} \leq \norm{T'\psi}_{\phi} \phi \leq \norm{T'}_{F'\to (E')_{\phi}} \phi}$. The conclusion now follows because
	\begin{align*}
		\norm{Tf}&=\sup_{\norm{\psi}\leq 1} \modulus{\duality{\psi}{Tf}}
			     =\sup_{\norm{\psi}\leq 1} \modulus{\duality{T'\psi}{f}}\\
			     &\leq \norm{T'}_{F'\to (E')_{\phi}} \duality{\phi}{\modulus{f}}\\
			     &=\norm{T'}_{F'\to (E')_{\phi}}\norm{f}_{E^{\phi}}.
	\end{align*}
	 for all $f \in E$.
\end{proof}

\begin{proof}[Proof of Theorem~\ref{thm:local-anti-maximum}]
	Without loss of generality, we only consider the case $\lambda_0=0$. The assumption ${S\dom{A} \subseteq G_{Su}}$ and the closed graph theorem together imply that ${S \in \calL(\dom{A},G_{Su})}$. Moreover, $\Res(\mu,A)T$ extends as a bounded linear operator from $E^{T'\phi}$ to $F$ for every $\mu \in \rho(A)$ (Lemma~\ref{lem:operator-extension}). Thus, another application of the closed graph theorem yields ${S\Res(\lambda,A)\Res(\mu,A)T \in \calL\left(E^{T'\phi},G_{Su}\right)}$ for every $\lambda,\mu \in \rho(A)$.
	
	Now, let $\lambda_1<0$ be in the resolvent set of $A$ and let $c>0$ be a constant such that $S \Res(\lambda_1,A)T \leq c SPT$. Fix $0<f\in E$ and let ${C:=\norm{S\Res(\lambda_1,A)^2T}_{E^{T'\phi}\to G_{Su}}}$. Then
	\[
		\norm{S\Res(\lambda_1, A)^2Tf}_{Su}\leq C \norm{f}_{E^{T'\phi}} =C \duality{T'\phi}{f}.
	\]
	As $A$ is real, so is $\Res(\lambda_1,A)$. In addition, $S$ and $T$ are also real. Hence using the above inequality, we get
	\[
		S\Res(\lambda_1,A)^2Tf \geq - \modulus{S\Res(\lambda_1, A)^2Tf} \geq -C \duality{T'\phi}{f}Su=-C (Su \otimes T'\phi)f. 
	\]
	Therefore
	\begin{equation}\label{eq:resolvent-inequality}
		S\Res(\lambda_1,A)^2T \geq -(c'' C) SPT,
	\end{equation}
	 where $c''>0$ is such that $c'' SPT \geq  (Su \otimes T'\phi)>0$ (obtained by Proposition~\ref{prop:sufficient-projection}).
	 
	 Next, define
	 \[
	 	R_{\lambda}:=S(\lambda_1-\lambda)^2\Res(\lambda_1,A)^2 \lambda \Res(\lambda,A)T
	\]
	for each $\lambda\in \rho(A)$. We claim that ${\lim_{\lambda \to 0}\norm{R_{\lambda}-SPT}_{E^{T'\phi} \to G_{Su}} =0}$. Indeed, since $0$ is simple pole of the resolvent $\Res(\argument,A)$, so the limit ${\lim_{\lambda \to 0}\lambda \Res(\lambda,A) = P}$ exists in the operator norm by~\cite[Proposition~4.3.15]{ArendtBattyHieberNeubrander2011}. Therefore (as in the proof of Theorem~\ref{thm:sufficient-individual-resolvent}), the resolvent equation can be made use of to show that $\linebreak{\lim_{\lambda\to 0} \norm{\lambda\Res(\lambda,A)-P}_{F\to \dom{A}}=0}$. Actually, because $0$ is simple pole of the resolvent $\Res(\argument,A)$, we also have that  $\lambda_1 \Res(\lambda_1, A)P=P$. Let
	\[
		M_1:=\norm{S\Res(\lambda_1,A)}_{\dom{A}\to G_{Su}}\quad \text{and}\quad M_2:=\norm{\Res(\lambda_1,A)T}_{E^{T'\phi} \to F}.
	\]
	Then
	\begin{align*}
		\lim_{\lambda \to 0}\norm{R_{\lambda}-SPT}_{E^{T'\phi} \to G_{Su}} & = \lim_{\lambda \to 0}\norm{R_{\lambda}-S(\lambda_1-\lambda)^2\Res(\lambda_1,A)^2PT}_{E^{T'\phi} \to G_{Su}}\\
												     &\leq  M_1\left(\lim_{\lambda \to 0}(\lambda_1-\lambda)^2 \norm{\lambda\Res(\lambda,A)-P}_{F\to \dom{A}}\right) M_2\\
												     &=0,
	\end{align*}
	which proves our claim.
	
	From the convergence just shown, we infer the existence of a $\delta\in (0,-\lambda_1)$ such that
	 \[
	 	\norm{R_{\lambda}-SPT}_{E^{T'\phi} \to G_{Su}} <\frac{1}{2c''} \text{ for all } -\delta<\lambda<0.
	\]
	Again using that the operators $S,T$, and $A$ are real, we obtain
	\[
		\modulus{R_{\lambda}f-SPTf} < \frac{1}{2c''} \norm{f}_{E^{T'\phi}} Su=\frac{1}{2c''}  \duality{T'\phi}{f} Su =  \frac{1}{2c''} (Su \otimes T'\phi)f
	\]
	for every $0<f\in E$, which gives
	\begin{equation}\label{eq:resolvent-convergence}
		R_{\lambda} > \frac{1}{2} SPT \text{ for all } -\delta<\lambda<0.
	\end{equation}
	
	Finally, let $\lambda \in (-\delta,0)$. Iterating the resolvent equation, we get
	\begin{align*}
	\lambda S \Res(\lambda,A)T&=\lambda S \Res(\lambda_1,A)T + \lambda(\lambda_1-\lambda) S \Res(\lambda_1, A)^2T+ R_{\lambda}\\
					  &> c \lambda  SPT -c''C \lambda(\lambda_1-\lambda) SPT + \frac{1}{2} SPT\\
					  &= \left(c \lambda -c''C \lambda(\lambda_1-\lambda) + \frac{1}{2}\right) SPT;
	\end{align*}
	here the inequality is a result of the assumption ${S\Res(\lambda_1,A)T\leq cSPT}$ along with inequalities in \eqref{eq:resolvent-inequality} and \eqref{eq:resolvent-convergence}. It follows that, for $\lambda$ less than and sufficiently close to $0$ and for all $f\in E_+$, we have
		\[
			\lambda S \Res(\lambda,A)Tf \geq \frac12 SPTf \geq \frac{1}{2c''} (Su \otimes T'\phi) f=c' Su;
		\]
		where $c'= \frac{1}{2c''} \duality{\phi}{Tf}>0$ (due to the strict positivity of $T'\phi$).
\end{proof}

\begin{corollary}[Local uniform maximum principle]
	\label{cor:local-maximum}
	Under the assumptions of Theorem~\ref{thm:local-anti-maximum}, if there exists $\lambda_1$ in the resolvent set of $A$ with ${\lambda_1 >\lambda_0}$ and $c>0$ such that ${S \Res(\lambda_1,A)T \geq -c SPT}$, then there exists $\lambda_2>\lambda_0$ satisfying the following:~${(\lambda_0,\lambda_2]\subseteq \rho(A)}$ and for each $0<f\in E$, there exists a constant $c'>0$ such that ${\lambda S\Res(\lambda,A)Tf \leq c'Su}$ for all $(\lambda_0,\lambda_2]$. In particular, $S\Res(\argument,A)T$ is uniformly eventually strongly positive with respect to $Su$ at $\lambda_0$.
\end{corollary}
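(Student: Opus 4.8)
The claim is the ``maximum principle'' mirror image of Theorem~\ref{thm:local-anti-maximum}, so the natural strategy is to reduce it to that theorem by applying it to the operator $-A$ in place of $A$. First I would observe that if $\lambda_0$ is a real spectral value of $A$ which is a simple pole of $\Res(\argument,A)$, then $-\lambda_0$ is a real spectral value of $-A$ which is a simple pole of $\Res(\argument,-A)$, and the associated spectral projection is again $P$ (this is the same observation already used at the end of the proof of Theorem~\ref{thm:sufficient-individual-resolvent}). Moreover $\ker(-\lambda_0-(-A)) = \ker(\lambda_0-A)$ and $\ker(-\lambda_0-(-A)') = \ker(\lambda_0-A')$, so the eigenvectors $x$ and $\psi$, as well as the strong positivity conditions $Sx \gg_{Su} 0$ and $T'\psi \gg_{T'\phi} 0$, carry over verbatim. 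The domination conditions $S\dom{A} \subseteq G_{Su}$ and $T'\dom{A'} \subseteq (E')_{T'\phi}$ are also unchanged since $\dom{-A} = \dom{A}$ and $(-A)' = -A'$ with $\dom{(-A)'} = \dom{A'}$. Hence Assumptions~\ref{ass:standing-assumptions} together with hypotheses (a)--(d) of Theorem~\ref{thm:local-anti-maximum} hold for $-A$.

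Next I would translate the hypothesis. We are given $\lambda_1 > \lambda_0$ in $\rho(A)$ with $S\Res(\lambda_1,A)T \geq -c\,SPT$. The key algebraic identity is $\Res(\lambda_1, A) = -\Res(-\lambda_1, -A)$, since $(\lambda_1 - A)^{-1} = -((-\lambda_1) - (-A))^{-1}$. Therefore $-\lambda_1 < -\lambda_0$ lies in $\rho(-A)$ and the given inequality becomes $-S\Res(-\lambda_1,-A)T \geq -c\,SPT$, i.e.\ $S\Res(-\lambda_1,-A)T \leq c\,SPT$. This is precisely the hypothesis of Theorem~\ref{thm:local-anti-maximum} applied to $-A$, with the point $-\lambda_1$ playing the role of ``$\lambda_1$'' there (note $-\lambda_1 < -\lambda_0$, as required).

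Now I apply Theorem~\ref{thm:local-anti-maximum} to $-A$: it yields some $\tilde\lambda_2 < -\lambda_0$ with $[\tilde\lambda_2, -\lambda_0) \subseteq \rho(-A)$, and for each $0 < f \in E$ a constant $c' > 0$ such that $(\lambda - (-\lambda_0))\,S\Res(\lambda,-A)Tf \geq c'\,Su$ for all $\lambda \in [\tilde\lambda_2, -\lambda_0)$. Setting $\lambda_2 := -\tilde\lambda_2 > \lambda_0$ and substituting $\lambda = -\mu$ with $\mu \in (\lambda_0, \lambda_2]$, we have $-\mu \in [\tilde\lambda_2, -\lambda_0)$, so $(\lambda_0, \lambda_2] \subseteq \rho(A)$ (using $\rho(-A) = -\rho(A)$), and the inequality reads $(-\mu + \lambda_0)\,S\Res(-\mu, -A)Tf \geq c'\,Su$. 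Using $\Res(-\mu, -A) = -\Res(\mu, A)$ this becomes $(-\mu + \lambda_0)(-1)\,S\Res(\mu, A)Tf \geq c'\,Su$, that is, $(\mu - \lambda_0)\,S\Res(\mu,A)Tf \geq c'\,Su$. Hmm — I should double check the sign of $\lambda$ in the claimed conclusion: the statement reads $\lambda S\Res(\lambda,A)Tf \le c' Su$, which (after the reduction $\lambda_0 = 0$ that is harmless by translation) is $\mu S\Res(\mu,A)Tf \le c'Su$ for $\mu > 0$ small; but $(\mu - \lambda_0)S\Res(\mu,A)Tf = \mu S\Res(\mu,A)Tf \ge c'Su$ gives a \emph{lower} bound. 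So the reduction to $-A$ as stated produces the wrong direction, and the honest route is to redo the proof of Theorem~\ref{thm:local-anti-maximum} directly with the opposite sign in the resolvent iteration: starting from $S\Res(\lambda_1,A)T \geq -cSPT$ (with $\lambda_1 > \lambda_0 = 0$), bound $S\Res(\lambda_1,A)^2 T \leq c''C\,SPT$ using the reality of $S,T,A$ and Lemma~\ref{lem:operator-extension}, establish $R_\lambda < \frac{1}{2}SPT$ for $\lambda$ slightly above $0$ via the same operator-norm convergence $R_\lambda \to SPT$, and then iterate the resolvent equation $\lambda S\Res(\lambda,A)T = \lambda S\Res(\lambda_1,A)T + \lambda(\lambda_1-\lambda)S\Res(\lambda_1,A)^2 T + R_\lambda$, all of whose three terms are now bounded \emph{above} by a multiple of $SPT$ for $\lambda > 0$ small, giving $\lambda S\Res(\lambda,A)Tf \leq c'Su$.

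\textbf{Main obstacle.} The conceptually clean ``replace $A$ by $-A$'' reduction has a sign subtlety: $\Res(\lambda,A) = -\Res(-\lambda,-A)$ flips the sign of the resolvent \emph{and} the rescaling factor $\lambda$ flips sign, and one must track carefully whether these two flips reinforce or cancel. The net effect is that the $-A$ substitution converts ``uniform eventual strong negativity at $\lambda_0$'' for $A$ into ``uniform eventual strong positivity at $-\lambda_0$'' for $-A$ but with the roles of upper/lower bound preserved under the identification $(\lambda-\lambda_0)\Res(\lambda,A) \leftrightarrow (\mu+\lambda_0)\Res(\mu,-A)$ for $\mu = -\lambda$ — so in fact the clean reduction \emph{does} work, provided one writes the conclusion of Theorem~\ref{thm:local-anti-maximum} in the form involving the factor $(\lambda - \lambda_0)$ (negative on the left) rather than $\lambda$. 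Concretely: Theorem~\ref{thm:local-anti-maximum} gives $(\lambda-\lambda_0)S\Res(\lambda,A)Tf \geq c'Su$ for $\lambda$ just left of $\lambda_0$; applying this to $-A$ at $-\lambda_0$ gives $(\mu-(-\lambda_0))S\Res(\mu,-A)Tf \geq c'Su$ for $\mu$ just left of $-\lambda_0$, i.e.\ $(\mu+\lambda_0)(-1)S\Res(-\mu,A)Tf \geq c'Su$; writing $\lambda = -\mu$ (just right of $\lambda_0$) this is $-(\lambda_0-\lambda+2\lambda_0\cdot 0\ldots)$ — the bookkeeping is error-prone, so the safest exposition is to state the reduction at the level of ``apply Theorem~\ref{thm:local-anti-maximum} to $-A$'' and then verify the three translations (spectral value $\lambda_0 \mapsto -\lambda_0$; hypothesis $S\Res(\lambda_1,A)T \geq -cSPT \mapsto S\Res(-\lambda_1,-A)T \leq cSPT$; conclusion, via $\lambda S\Res(\lambda,A)T = -(-\lambda)S\Res(-\lambda,-A)T$) one at a time, each being a one-line identity. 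I expect that once $\lambda_0$ is normalized to $0$ the signs collapse transparently and the proof is three or four lines: apply the theorem to $-A$, rewrite the hypothesis, rewrite the conclusion, and note that uniform eventual strong negativity of $S\Res(\argument,-A)T$ at $0$ is exactly uniform eventual strong positivity of $S\Res(\argument,A)T$ at $0$.
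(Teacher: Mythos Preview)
Your approach---reduce to Theorem~\ref{thm:local-anti-maximum} applied to $-A$---is exactly what the paper does, and your verification that the hypotheses (a)--(d), the spectral projection $P$, and the eigenvectors $x,\psi$ transfer to $-A$ is correct. The paper's entire proof is three lines: normalize $\lambda_0=0$, rewrite the hypothesis $S\Res(\lambda_1,A)T \geq -cSPT$ as $S\Res(-\lambda_1,-A)T \leq cSPT$, and invoke Theorem~\ref{thm:local-anti-maximum} for $-A$.

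Your sign confusion stems from what is almost certainly a typo in the stated corollary: the displayed inequality ``$\lambda S\Res(\lambda,A)Tf \leq c'Su$'' should read ``$(\lambda-\lambda_0) S\Res(\lambda,A)Tf \geq c'Su$'' (after normalization $\lambda_0=0$ these differ only in the direction of the inequality). An upper bound by $c'Su$ could never imply the ``In particular'' clause about uniform eventual strong \emph{positivity}; only the lower bound does. Your careful computation
\[
(\mu+\lambda_0)\,S\Res(\mu,-A)Tf \;=\; (\lambda-\lambda_0)\,S\Res(\lambda,A)Tf \;\geq\; c'Su,\qquad \lambda=-\mu\in(\lambda_0,\lambda_2],
\]
is correct and is the intended conclusion. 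There is no need for your proposed fallback of rerunning the iterated resolvent argument with flipped signs---that route would also produce the lower bound $\geq c'Su$, not the (erroneous) upper bound in the statement, so it would not resolve the discrepancy you noticed. Trust your derivation; the reduction to $-A$ is complete.
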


\begin{proof}
	As always assume that $\lambda_0=0$. If there exists ${\lambda_1 >0}$ and $c>0$ such that $S \Res(\lambda_1,A)T \geq -c SPT$, then ${S \Res(-\lambda_1,-A)T\leq c SPT}$. The result can now be deduced by applying Theorem~\ref{thm:local-anti-maximum} to the operator $-A$.
\end{proof}

As announced at the beginning of the section, we are now going to prove a criterion sufficient for uniform local eventual strong positivity of semigroups similar to~\cite[Theorem~3.1]{DanersGlueck2018b}. However, we restrict ourselves to semigroups that are \emph{norm continuous at infinity}. This concept was introduced and studied by {Mart\'{\i}nez} and {Maz\'on} in~\cite{MartinezMazon1996} and is weaker as compared to eventual norm continuity. Even so, it has similar spectral and convergence properties (see the aforementioned reference). We recall that a $C_0$-semigroup $(e^{tA})_{t \geq 0}$ on a Banach space $F$ is said to be \emph{norm continuous at infinity} if $\gbd(A) >-\infty$ and
\[
	\lim_{t\to \infty} \limsup_{s\to 0} \norm{e^{t(A-\gbd(A))} - e^{(t+s)(A-\gbd(A))} }=0.
\]

\begin{theorem}
	\label{thm:sufficient-uniform-semigroup}
		Let Assumptions~\ref{ass:standing-assumptions} hold and let $(e^{tA})_{t \geq 0}$ be $C_0$-semigroup which is norm continuous at infinity. Furthermore, suppose the following are fulfilled.
	\begin{enumerate}[\upshape (a)]
		\item The spectral bound $\spb(A)$ is a simple pole of the resolvent $\Res(\argument,A)$ and a dominant spectral value.
		
		\item The eigenspace $\ker(\spb(A)-A)$ is one-dimensional.
		
		\item There exist $t_1,t_2>0$ such that $Se^{t_1 A}F \subseteq G_{Su}$ and $T'e^{t_2A'}F'\subseteq (E')_{T'\phi}$. 
	\end{enumerate}
	
	Let $x$ be a non-zero vector in $\ker(\spb(A)-A)$ and $\psi$ be a non-zero functional in the dual eigenspace $\ker(\spb(A)-A')$ such that $Sx \gg_{Su} 0$ and $T'\psi \gg_{T'\phi} 0$. Then there exists $t_0\geq 0$ such that for each $0<f\in E$, there exists $c>0$ such that $S e^{tA}T f \geq cSu$ for all $t\geq t_0$. In particular, the family $(S e^{tA} T)_{t\geq 0}$ is uniformly eventually strongly positive with respect to $Su$.
\end{theorem}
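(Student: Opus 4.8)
The plan is to follow the blueprint of \cite[Theorem~3.1]{DanersGlueck2018b}, adapting it to the three–space setting with the operators $S$ and $T$. As usual we normalise $\spb(A)=0$, and we let $P=x\otimes\psi$ denote the spectral projection at $0$ (after rescaling so that $\duality{\psi}{x}=1$); since $\spb(A)$ is a simple pole of $\Res(\argument,A)$, we have $\Ima P=\ker A$ and $\Ima P'=\ker A'$, and Proposition~\ref{prop:sufficient-projection} gives a constant so that $SPT\geq c_0\,Su\otimes T'\phi$, in particular $SPT\gg_{Su}0$. The first reduction is to pass from $E$ and $G$ to the "enlarged" spaces: by hypothesis (c) and the closed graph theorem, $Se^{t_1A}\in\calL(F,G_{Su})$, and by Lemma~\ref{lem:operator-extension} (applied to $T$, using $T'e^{t_2A'}F'\subseteq(E')_{T'\phi}$ together with $T'\phi$ strictly positive) the operator $e^{t_2A}T$ extends to a bounded operator from $E^{T'\phi}$ into $F$. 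Hence, for all large $t$, the operator $Se^{tA}T=Se^{t_1A}\,e^{(t-t_1-t_2)A}\,e^{t_2A}T$ factors as a bounded operator $E^{T'\phi}\to G_{Su}$, and it suffices to prove the desired estimate for these factored operators.

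Next I would exploit that the semigroup is norm continuous at infinity together with the fact that $\spb(A)=0$ is a \emph{dominant} spectral value which is a simple pole. By the spectral theory of semigroups norm continuous at infinity \cite{MartinezMazon1996} (same conclusions as for eventually norm continuous semigroups), the rescaled semigroup decomposes: writing $F=\ker A\oplus\ker\psi$ (as in the proof of Corollary~\ref{cor:sufficient-individual-semigroup-resolvent}, since $x\notin\ker\psi$ and $\ker\psi$ has codimension one), the restriction $\left(e^{tA}\restrict{\ker\psi}\right)_{t\geq0}$ has negative growth bound, so there are $M\geq1$ and $\varepsilon>0$ with $\norm{e^{tA}\restrict{\ker\psi}}\leq Me^{-\varepsilon t}$ for all $t\geq0$. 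Consequently $e^{tA}\to P$ in operator norm as $t\to\infty$, and in fact $\norm{e^{tA}-P}_{\calL(F)}\leq Me^{-\varepsilon t}$ for large $t$. Composing with the bounded factors from the previous paragraph, I obtain $\norm{Se^{tA}T-SPT}_{E^{T'\phi}\to G_{Su}}\to0$ as $t\to\infty$; more precisely this difference decays and hence, for some $t_0$, is $<\tfrac12 c_0$ for all $t\geq t_0$.

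Now fix $t\geq t_0$ and $0<f\in E$. Since $S$, $T$ and $e^{tA}$ are real, $Se^{tA}Tf$ is a real vector, and the norm estimate in $G_{Su}$ yields $\modulus{Se^{tA}Tf-SPTf}\leq\norm{Se^{tA}Tf-SPTf}_{Su}\,Su<\tfrac12 c_0\,\norm{f}_{E^{T'\phi}}\,Su=\tfrac12 c_0\,\duality{T'\phi}{f}\,Su$, while $SPTf=(Sx\otimes T'\psi)f\geq c_0\,\duality{T'\phi}{f}\,Su$ from Proposition~\ref{prop:sufficient-projection} (here I use $T'\psi\gg_{T'\phi}0$, $Sx\gg_{Su}0$, and the strict positivity of $T'\phi$ to see $\duality{T'\phi}{f}>0$). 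Subtracting gives $Se^{tA}Tf>SPTf-\tfrac12 c_0\,\duality{T'\phi}{f}\,Su\geq\tfrac12 c_0\,\duality{T'\phi}{f}\,Su$, so with $c:=\tfrac12 c_0\,\duality{T'\phi}{f}>0$ we get $Se^{tA}Tf\geq c\,Su$ for all $t\geq t_0$. Since $t_0$ was chosen independently of $f$, this says precisely that $(Se^{tA}T)_{t\geq0}$ is uniformly eventually strongly positive with respect to $Su$.

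The main obstacle is the second step: one must be sure that norm continuity at infinity, rather than eventual norm continuity, still forces the rescaled semigroup to converge to $P$ \emph{in operator norm}. This rests on the result of Mart\'inez–Maz\'on that such semigroups have the same spectral mapping / asymptotic behaviour as eventually norm continuous ones, so that a dominant spectral value which is a simple pole produces an operator-norm spectral gap; combined with the closed-graph/Lemma~\ref{lem:operator-extension} factorisation, everything else is the same soft argument used in Theorem~\ref{thm:sufficient-individual-semigroup-resolvent}, now upgraded from strong to norm convergence, which is exactly what turns "individual" into "uniform".
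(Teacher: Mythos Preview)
Your proposal is correct and follows essentially the same route as the paper: normalise $\spb(A)=0$, factor $Se^{tA}T$ through $E^{T'\phi}\to F\to G_{Su}$ via the closed graph theorem and Lemma~\ref{lem:operator-extension}, use norm continuity at infinity together with the dominant-simple-pole hypothesis to obtain $e^{tA}\to P$ in operator norm, and then run the gauge-norm estimate against $SPT\geq c_0\,Su\otimes T'\phi$ from Proposition~\ref{prop:sufficient-projection}. The only cosmetic difference is that the paper invokes Thieme's convergence criterion \cite[Theorem~2.7]{Thieme1998} (via the equivalence of essential norm continuity and norm continuity at infinity) to get $e^{tA}\to P$ in one stroke, whereas you sketch the same conclusion by hand through the decomposition $F=\Ima P\oplus\ker P$ and the spectral-mapping property from \cite{MartinezMazon1996}; both arrive at the same place.
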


In Section~\ref{section:applications}, we give explicit examples where assumption (c) in the above theorem holds. 

It is interesting to observe that the constant $c>0$ in the above theorem depends only on the vector $f\in E$ but not on the times $t\geq t_0$. This is consistent with the behaviour exhibited in the previous results of this section as well as with the results in Section~\ref{section:sufficient-conditions-individual}. 

Observe that, unlike Theorem~\ref{thm:sufficient-individual-semigroup-resolvent}, we do not impose any convergence restrictions on the semigroup. In fact, we will show that the assumptions of Theorem~\ref{thm:sufficient-uniform-semigroup} guarantee that the rescaled semigroup $(e^{t(A-\spb(A))})_{t \geq 0}$ converges uniformly as ${t\to \infty}$. For this purpose, we will employ a characterization proved by Thieme in~\cite{Thieme1998} for uniform convergence of the rescaled semigroup. Actually, Thieme used the seemingly weaker notion of \emph{essential norm continuity} instead of norm continuity at infinity. However, these two concepts are equivalent. This was proved by Blake in his PhD thesis~\cite[Corollary~3.3.7]{Blake1999} and also by Nagel and Poland in~\cite[Corollary~4.7]{NagelPoland2000}.

\begin{proof}[Proof of Theorem~\ref{thm:sufficient-uniform-semigroup}]
	Replacing $A$ with $A-\spb(A)$ if necessary, we may assume that $\spb(A)=0$. Since the semigroup is norm continuous at infinity and $0$ is a simple pole of the resolvent $\Res(\argument,A)$ as well as a dominant spectral value, all the conditions sufficient for the uniform convergence of our semigroup given in~\cite[Theorem~2.7]{Thieme1998} are satisfied (cf.~\cite[Definition~2.1 and Proposition~2.3]{Thieme1998}).
	 Let $P$ denote the spectral projection associated to $0$. Since $0$ is a pole of the resolvent $\Res(\argument,A)$, we in fact have that $e^{tA} \to P$ in the operator norm as $t \to \infty$.
	
	Next note that by the closed graph theorem $Se^{t_1A} \in \calL(F,G_{Su})$ and that by Lemma~\ref{lem:operator-extension}, the operator $e^{t_2 A} T$ extends to a bounded operator from $E^{T'\phi}$ to $F$. Therefore for every $t\geq 0$, the operator $S e^{(t+t_1+t_2)A}(I-P)T$ extends to a bounded linear operator from $E^{T'\phi}$ to $G_{Su}$. Let $M_t:=\norm{Se^{(t+t_1+t_2)A}(I-P)T}_{E^{T'\phi} \to G_{Su}}$. Then
	\begin{align*}
		M_t \leq \norm{Se^{t_1A}}_{F \to G_{Su}} \norm{e^{tA}(I-P)}_{F \to F} \norm{e^{t_2A }T}_{E^{T'\phi} \to F}\to 0 \text{ as } t \to \infty.
	\end{align*}
	
	Fix $\epsilon>0$ and $0<f\in E$. Then there exists $t_3>0$ such that $M_t < \epsilon$ for all $t \geq t_3$ and so
	\[
		\norm{Se^{tA}(I-P)Tf}_{Su}\leq M_{t-t_1-t_2} \norm{f}_{E^{T'\phi}} < \epsilon \norm{f}_{E^{T'\phi}} = \epsilon \duality{T'\phi}{f}
	\]
	for all $t \geq t_0:=t_1+t_2+t_3$. Recalling that the operators $S$ and $T$  are real and that our semigroup is also real, we obtain
	\[
		Se^{tA}(I-P)Tf \geq - \modulus{Se^{tA}(I-P)Tf} \geq -\epsilon \duality{T'\phi}{f}Su = -\epsilon (Su \otimes T'\phi)f
	\]
	for all $t\geq t_0$. As $0$ is a simple pole of the resolvent $\Res(\argument,A)$, so $e^{tA}P=P$ and there exist $c'>0$ such that $SPT \geq c' (Su \otimes T'\phi)$ (Proposition~\ref{prop:sufficient-projection}). Whence
	\begin{align*}
		Se^{tA}Tf&=Se^{tA}PTf+ Se^{tA}(I-P)Tf \\
			      &= SPTf+Se^{tA}(I-P)Tf\\
			      & \geq (c'-\epsilon) (Su \otimes T'\phi)f
	\end{align*}
	for all $t \geq t_0$. Choosing $\epsilon=\frac{c'}{2}$ and letting $c=\frac{c'}{2} \duality{T'\phi}{f}>0$, the conclusion follows.
\end{proof}

\section{Applications}
\label{section:applications}

We now illustrate how the sufficient conditions proved in the previous sections can be applied to concrete examples. Of course, all the applications given in~\cite[Section~6]{DanersGlueckKennedy2016a},~\cite[Section~6]{DanersGlueckKennedy2016b},~\cite[Section~4]{DanersGlueck2018b}, and~\cite[Chapter~11]{Glueck2016} are particular examples of local eventual positivity with $E=F=G$ and $S$ and $T$ as the identity operator. However, we will show that our setting gives us more freedom on domains when dealing with function spaces.

\subsection*{The Dirichlet bi-Laplacian}

Let $\Omega \subseteq \bbR^d$ be a non-empty bounded domain with $C^{\infty}$-boundary and fix $p \in (1,\infty)$. On $F:=L^p(\Omega,\bbC)$, we consider the \emph{bi-Laplace operator with Dirichlet boundary conditions} $A_p: \dom{A_p} \subseteq F \to F$ given by 
\begin{equation*}
	\begin{split}
		\dom{A_p} &=W^{4,p}(\Omega) \cap W_0^{2,p}(\Omega)\\
		  A_p f &= -\Delta^2 f.
	\end{split}
\end{equation*}
For historical remarks and other details about the Dirichlet bi-Laplacian, we refer the reader to~\cite{GrunauSweers1996,GrunauSweers1997a,GrunauSweers1997b}.

The following properties of $A_p$ are well-known:~the operator $A_2$ is self-adjoint and $A_p$ has compact resolvent. Moreover, $A_p$ generates an analytic $C_0$-semigroup on $F$. In addition, the spectrum $\spec(A_p)$ consists only of eigenvalues of $A_p$, the corresponding eigenspaces are independent of $p$, and the resolvent operators are consistent on the $L^p$-scale. The details can be found, for instance in~\cite[Proposition~8.3.1]{Glueck2016}.

Furthermore, if the domain is sufficiently close to a unit ball in $\bbR^d$ (in the sense of~\cite[Theorem~5.2]{GrunauSweers1997b}), then due to the concrete results obtained by Grunau and Sweers (in the aforementioned reference), the abstract theory of eventually positive semigroups could be applied to the Dirichlet bi-Laplacian. As a result, individual eventual positivity properties of both the semigroup and the resolvent were proved in~\cite[Section~11.4]{Glueck2016} and uniform eventual positivity of the semigroup was shown in~\cite[Theorem~4.4]{DanersGlueck2018b}.  In contrast, our setting does not require any additional assumption on the domain. While on one hand, this allows us to only show individual local eventual positivity of the resolvent for sufficiently large $p$, on the other, we obtain uniform local eventual positivity of the semigroup for all $p\in (1,\infty)$.

Assume the eigenspace $\ker(\spb(A_p)-A_p)$ is generated by a vector $x$ and let $V \subseteq \Omega$ be a connected component of $[x \neq 0]$. Replacing $x$ with $-x$ if necessary, we may assume $x>0$ on $V$. Fix a compact subset $K$ of $V$ and define ${S:F \to F}$ by $S f:= \one_K f$ for all $f\in F$, where $\one_K$ denotes the indicator function of $K$. Intuitively, the operator $S$ corresponds to restricting the functions to a compact subset of a \emph{nodal} domain. Next, let $E=\{f\restrict{K}: f\in F\}$ and $T\in \calL(E,F)$ denote the extension by zero operator. It is clear that both $S$ and $T$ are positive operators and ${T': F'\to E'}$ is given by $T' g = \one_K g$ for all $g \in F'=L^q(\Omega)$; here $q$ is such that $1/p+1/q=1$. In particular, $T'\one_\Omega=\one_K$ is a strictly positive functional on $E'$. 

\begin{theorem}
	The family $\left(S e^{tA_p} T\right)_{t \geq 0}$ is uniformly eventually strongly positive with respect to $\one_K$ and if $p> \frac{d}{2}$, then $S\Res(\argument,A)T$ is individually eventually strongly positive and negative with respect to $\one_K$ at $\spb(A_p)$.
\end{theorem}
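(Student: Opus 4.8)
The plan is to verify the hypotheses of the abstract sufficient conditions — Theorem~\ref{thm:sufficient-uniform-semigroup} for the semigroup statement and Corollary~\ref{cor:sufficient-individual-semigroup-resolvent} together with Theorem~\ref{thm:sufficient-individual-semigroup-resolvent}\ref{thm:sufficient-individual:resolvent} for the resolvent statement — with the concrete data $F=G=L^p(\Omega,\bbC)$, $E=\{f\restrict{K}:f\in F\}$, $A=A_p$, and $S,T$ as defined above, and with $u=\one_\Omega$, $\phi=\one_\Omega$. Several of the needed ingredients are already quoted: $A_p$ has compact resolvent (so $\spec(A_p)$ is all eigenvalues and each is a pole of finite algebraic multiplicity), $A_p$ generates an analytic semigroup (hence norm continuous for $t>0$, in particular norm continuous at infinity since $\gbd(A_p)=\spb(A_p)>-\infty$), $A_2$ is self-adjoint, and the eigenspaces are $p$-independent.

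\textbf{Step 1: spectral structure of $A_p$.} First I would treat $p=2$: since $A_2$ is self-adjoint and bounded above with compact resolvent, $\spb(A_2)$ is an eigenvalue, it is a pole of order one of the resolvent (self-adjointness forces semisimplicity, so the pole is simple), and by the $p$-independence of eigenspaces the same spectral value $\spb(A_p)=\spb(A_2)=:\lambda_0$ and the same (one-dimensional, once we fix this — see Step 2) eigenspace work for all $p$; simplicity of the pole also transfers along the $L^p$-scale because the spectral projections are consistent. The dominance assumption $\perSpec(A_p)=\{\lambda_0\}$ likewise follows from self-adjointness at $p=2$ (the peripheral spectrum of a self-adjoint operator that is bounded above and has compact resolvent is a single point) and then transfers. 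The hypothesis that $\ker(\lambda_0-A_p)$ is one-dimensional is taken as given in the statement ("Assume the eigenspace $\ker(\spb(A_p)-A_p)$ is generated by a vector $x$").

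\textbf{Step 2: local positivity of the eigenvector and its dual partner.} Let $x$ generate $\ker(\lambda_0-A_p)$, normalized so $x>0$ on the connected component $V$ of $[x\ne 0]$ and $K\subseteq V$ compact. Then $Sx=\one_K x$; since $x$ is continuous on $\overline V$ (elliptic regularity: $x\in W^{4,p}(\Omega)$ for all $p$, hence $x\in C^2(\Omega)$) and strictly positive on the compact set $K$, we have $Sx\geq \delta\,\one_K=\delta\,Su$ for some $\delta>0$, i.e.\ $Sx\gg_{Su}0$. For the dual side: $A_2$ self-adjoint means we may take $\psi$ to be (the functional represented by) the same $x$, so $T'\psi=\one_K x$, which as above satisfies $\one_K x\geq \delta\,\one_K=T'\phi$, giving $T'\psi\gg_{T'\phi}0$. (For general $p$ one uses that the dual eigenspace $\ker(\lambda_0-A_p')=\ker(\lambda_0-A_q)$ is also spanned by this same $x$ by $p$-independence.)

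\textbf{Step 3: the smoothing conditions.} For the semigroup we need $t_1,t_2>0$ with $Se^{t_1A_p}F\subseteq G_{Su}$ and $T'e^{t_2A_p'}F'\subseteq(E')_{T'\phi}$. Since $(e^{tA_p})_{t\geq 0}$ is analytic, $e^{tA_p}F\subseteq\bigcap_n\dom{A_p^n}\subseteq W^{4n,p}(\Omega)$ for every $t>0$; choosing $n$ with $4n>d/p+$ enough, Sobolev embedding gives $e^{tA_p}F\subseteq C(\overline\Omega)\subseteq C_b(\Omega)$, and then $Se^{tA_p}g=\one_K e^{tA_p}g$ is dominated by $\norm{e^{tA_p}g}_\infty\one_K$, i.e.\ lies in $F_{\one_K}=G_{Su}$. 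The dual condition is symmetric: $A_p'=A_q$ is again the Dirichlet bi-Laplacian, analytic, and $T'e^{tA_q}h=\one_K e^{tA_q}h$ lands in $(L^q)_{\one_K}=(E')_{T'\phi}$ by the same regularity argument. For the individual resolvent statement we instead need $S\dom{A_p}\subseteq G_{Su}$; this is where the restriction $p>d/2$ enters — $\dom{A_p}=W^{4,p}\cap W_0^{2,p}\hookrightarrow C(\overline\Omega)$ exactly when $4>d/p$, which is implied by (and for $d\le 7$ is weaker than) $p>d/2$; more precisely one needs $W^{4,p}\hookrightarrow L^\infty$ locally, which holds as soon as $4p>d$, and $p>d/2$ certainly suffices. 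Then $S\dom{A_p}\subseteq F_{\one_K}=G_{Su}$. The convergence hypothesis of Corollary~\ref{cor:sufficient-individual-semigroup-resolvent} — that $e^{t(A_p-\lambda_0)}\to 0$ strongly on $\ker\psi$ — follows because $\lambda_0$ is a dominant simple pole with one-dimensional eigenspace, so the rest of the spectrum of $A_p-\lambda_0$ lies in a half-plane $\{\re z\le -\varepsilon\}$ and (compact resolvent, analytic semigroup) the part of the semigroup on the complementary spectral subspace $\ker\psi$ decays; alternatively invoke Remark~\ref{rem:sufficient-individual}(b) / the ABLV-type reasoning, but here a spectral gap makes it immediate.

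\textbf{Step 4: conclude.} With Steps 1--3 all hypotheses of Theorem~\ref{thm:sufficient-uniform-semigroup} are met, yielding uniform eventual strong positivity of $(Se^{tA_p}T)_{t\geq 0}$ with respect to $Su=\one_K$; and for $p>d/2$, Corollary~\ref{cor:sufficient-individual-semigroup-resolvent} (feeding into Theorem~\ref{thm:sufficient-individual-semigroup-resolvent}\ref{thm:sufficient-individual:resolvent}) gives individual eventual strong positivity of $S\Res(\argument,A_p)T$ at $\spb(A_p)$, while individual eventual strong \emph{negativity} at $\spb(A_p)$ comes from Theorem~\ref{thm:sufficient-individual-resolvent}, whose hypotheses ($\lambda_0$ a simple pole, one-dimensional eigenspace, $Sx\gg_{Su}0$, $T'\psi\gg_{T'\phi}0$, $S\dom{A_p}\subseteq G_{Su}$) are precisely what we have established.

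\textbf{Main obstacle.} The delicate point is Step 1 — pinning down that $\spb(A_p)$ is a \emph{simple} pole, is a \emph{dominant} spectral value, and has the positivity-type properties of its eigenvector on the nodal component — since $A_p$ is emphatically \emph{not} a positive operator (the bi-Laplacian does not generate a positive semigroup), so Perron--Frobenius theory is unavailable and one genuinely leans on self-adjointness at $p=2$ plus the $p$-independence/consistency machinery to propagate. The Sobolev-exponent bookkeeping in Step 3 (getting the precise threshold $p>d/2$ for the domination condition, versus merely $4p>d$) is routine but must be stated carefully, as the theorem's hypothesis is exactly this inequality.
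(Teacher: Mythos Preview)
Your proposal is correct and follows essentially the same route as the paper: verify the spectral hypotheses (simple pole, dominant spectral value, one-dimensional eigenspace), the local strong positivity $Sx\gg_{\one_K}0$ and $T'\psi\gg_{\one_K}0$, and the smoothing/domination conditions, then invoke Theorem~\ref{thm:sufficient-uniform-semigroup} for the semigroup and Theorem~\ref{thm:sufficient-individual-resolvent} for the resolvent.

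Two minor deviations are worth noting. First, for the simplicity of the pole at general $p$ you argue via self-adjointness at $p=2$ plus consistency of the spectral projections; the paper instead shows directly that the rescaled semigroup $(e^{t(A_p-\spb(A_p))})_{t\geq 0}$ is bounded on $L^p$ by factoring $e^{tB_p}$ through $L^2$ and $L^\infty$ (using the ultracontractivity established in the smoothing step), and boundedness then forces any pole on the peripheral spectrum to be simple. Your consistency argument is cleaner and avoids this detour; both are valid. Second, for the resolvent you route the \emph{positivity} through Corollary~\ref{cor:sufficient-individual-semigroup-resolvent} and Theorem~\ref{thm:sufficient-individual-semigroup-resolvent}\ref{thm:sufficient-individual:resolvent} (which requires checking the semigroup convergence on $\ker\psi$), and then separately invoke Theorem~\ref{thm:sufficient-individual-resolvent} for the \emph{negativity}. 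This is unnecessarily circuitous: Theorem~\ref{thm:sufficient-individual-resolvent} already delivers both positivity and negativity at once, and its hypotheses are exactly what you verify in Steps~1--3, so the paper simply applies it directly and never needs the convergence-on-$\ker\psi$ argument for the resolvent part. Your Sobolev bookkeeping is also slightly off in presentation (the paper embeds $W^{4,p}\hookrightarrow C^2(\overline\Omega)$, which is where the threshold $p>d/2$ comes from, rather than merely $C(\overline\Omega)$), though as you observe only $L^\infty$ is actually needed for the conclusion $S\dom{A_p}\subseteq F_{\one_K}$.
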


\begin{proof}
	 Since $A_p$ has compact resolvent, therefore $\spb(A_p)$ is a pole of the resolvent $\Res(\argument,A_p)$. As in the proofs of~\cite[Proposition~8.3.1 and Corollary~8.3.3]{Glueck2016}, it can be shown that the dual operator $A_p'$ coincides with the operator $A_q$ (where $q$ is such that $1/p+1/q=1$) and $x \in \ker(\spb(A_p)-A_p')$, as well. When talking about $x$ as a functional on $F$, we will denote it by $\psi$.
	 
	 Note that $\dom{A_p^n} \subseteq W^{4n,p}(\Omega)$ for each $n\in\bbN$ by~\cite[Corollary~2.21]{GazzolaGrunauSweers2010}. We now fix $n\in\bbN$ large enough, so that the Sobolev embedding theorem and the definition of $\dom{A_p}$ can together guarantee ${\dom{A_p^n} \subseteq C^2\left(\overline{\Omega}\right)\cap W_0^{2,p}(\Omega)}$.  Combining this with the analyticity of the semigroup implies that
	 \[
	 	e^{tA_p}F \subseteq \bigcap_{m\in\bbN} \dom{A_p^m} \subseteq \dom{A_p^n} \subseteq  L^{\infty}(\Omega).
	\]
	for each $t> 0$. Consequently, ${Se^{tA_p}F \subseteq F_{\one_K}}$. As $A_p'$ coincides with $A_q$, so the inclusion ${T'e^{tA_p'}F' \subseteq (E')_{\one_K}}$ holds as well. Now let $B_p:=A_p-\spb(A_p)$. Then the semigroup $\left(e^{tB_p}\right)_{t\geq 0}$ is bounded on $L^{2}(\Omega)$. Also,
	\[
		\norm{e^{tB_p}}_{F\to F}\leq \norm{I}_{L^{\infty}\to F} \norm{e^{B_p}}_{L^{2}\to L^{\infty}} \norm{e^{(t-2)B_p}}_{L^{2}\to L^{2}}\norm{I}_{L^{\infty}\to L^{2}}\norm{e^{B_p}}_{F\to L^{\infty}}
	\]
	for each $t\geq 2$; where $I$ denotes the corresponding identity operator. It follows that the semigroup $\left(e^{tB_p}\right)_{t\geq 0}$ is also bounded on $F$. From this, we can infer that the spectral bound $\spb(A_p)$ is a simple pole of the resolvent $\Res(\argument,A_p)$. Further, all spectral values of $A_2$ are real because the operator is self-adjoint. This means that $\spb(A_p)$ must be a dominant spectral value of $A_p$. In addition, we also have that $Sx \gg_{\one_K} 0$ and $T'\psi \gg_{\one_K} 0$. As every analytic semigroup is norm continuous at infinity, we can apply Theorem~\ref{thm:sufficient-uniform-semigroup} to conclude that $\left(S e^{tA_p} T\right)_{t \geq 0}$ is uniformly eventually strongly positive with respect to $\one_K$.
	 
	 Finally, let $p>\frac{d}2$. Then by the Sobolev embedding theorem
	 	\[
	 		\dom{A_p} = W^{4,p}(\Omega) \cap W_0^{2,p}(\Omega) \subseteq C^2\left(\overline{\Omega}\right)\cap W_0^{2,p}(\Omega)
		\]
	 and so $S\dom{A_p} \subseteq F_{\one_K}$.  We can now deduce from Theorem~\ref{thm:sufficient-individual-resolvent} that $S \Res(\argument,A) T$ is individually eventually strongly positive and negative with respect to $\one_K$ at $\spb(A_p)$.
\end{proof}

\subsection*{Square of the Dirichlet Laplacian}

The semigroup generated by the square of the Dirichlet Laplacian on $C_0(\Omega)$ (with sufficiently smooth $\Omega$) was shown to be individually eventually strongly positive with respect to the function $\dist{\argument}{\partial \Omega}$ in ~\cite[Theorem~6.1]{DanersGlueckKennedy2016b} (see also~\cite[Theorem~11.5.1]{Glueck2016}). Here we study the local eventual positive behaviour of the semigroup associated to this operator, but we allow rough domains and instead consider the space $L^2(\Omega)$.

Let $\Omega \subseteq \bbR^{d}$ be a non-empty bounded domain and denote by $\Delta_D$ the \emph{Laplace operator with Dirichlet boundary conditions} on $F:= L^2(\Omega)$, also known as the \emph{Dirichlet Laplacian}, i.e.,
\begin{equation*}
	\begin{split}
		\dom{\Delta_D} &=\{u \in H_0^1(\Omega):\Delta u \in F\}\\
		  \Delta_D f &= \Delta f.
	\end{split}
\end{equation*}
The operator $\Delta_D$ is associated with the form
\[
	a(u,v)=\int_\Omega \nabla u \nabla v
\]
on $H_0^1(\Omega)$. Employing form methods, it can be shown that $\Delta_D$ is self-adjoint and generates a contraction analytic $C_0$-semigroup of angle $\pi/2$ on $F$ and that ${\spec(\Delta_D) \subseteq (-\infty,0)}$;~see~\cite[Theorem~4.18]{ArendtChillSeifertVogtVoigt2015} and~\cite[Section~1]{ArendtBenilan1999}. As in the proof of~\cite[Proposition~6.6]{DanersGlueckKennedy2016a}, it can also be shown that $A:=-\Delta_D^2$ also generates a bounded analytic $C_0$-semigroup of angle $\pi/2$ on $F$.

It is well-known that $\spb(A)<0$ is a simple pole of the resolvent $\Res(\argument,A)$, a dominant spectral value, and that there exists a vector $x \in F$ of fixed sign such that $\ker(\spb(A)-A)$ is spanned by $x$; see~\cite[Theorem~10.4.1(i) and Exercise~11.6.7]{Strauss2007}. Without loss of generality, we assume $x>0$. Since $F$ is a Hilbert lattice, the Banach space dual of $A$ coincides with the Hilbert space adjoint of $A$, i.e., ${A'= A^*=A}$. This implies that $\ker(\spb(A)-A')$ contains a strictly positive functional, namely $x$.

Let $K$ be any compact subset of $\Omega$ and let $S$ be the positive operator given by $S: F \to F$ by $Sf:= \one_K f$ for all $f \in F$. From the Harnack inequality~\cite[Theorem~8.20]{GilbrargTrudinger2001}, we infer $x(\omega)>0$ for every $\omega\in \Omega$, which readily gives $Sx \gg_{\one_K} 0$. Furthermore, because there exists $n\in \bbN$ such that $\dom{A^n} \subseteq L^{\infty}(\Omega)$~\cite[Page~33]{ArendtBenilan1999}, whence as in Remark~\ref{rem:sufficient-individual}(b), there exists $t_0 >0$ such that
\[
	Se^{t_0 A}F \subseteq F_{\one_K} \text{ and } e^{t_0 A'}F' \subseteq (F')_{\one_K}.
\]
 Lastly, noting that as $(e^{tA})_{t \geq 0}$ is analytic, it is in particular, norm continuous at infinity and so by Theorem~\ref{thm:sufficient-uniform-semigroup}, we have proved the following:
 
\begin{theorem}
	\label{thm:square-dirichlet-laplacian}
 	Let $\Omega \subseteq \bbR^{d}$ be a non-empty bounded domain and $\Delta_D$ be the Dirichlet Laplacian on $L^2(\Omega)$. Let $K$ be any compact subset of $\Omega$ and $S$ be the operator on $L^2(\Omega)$ given by multiplication with the indicator function of $K$. Then $A:=-\Delta_D^2$ generates a bounded analytic $C_0$-semigroup of angle $\pi/2$ and $(S e^{tA})_{t\geq 0}$ is uniformly eventually strongly positive with respect to $\one_K$.
\end{theorem}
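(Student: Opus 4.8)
The plan is to verify that all hypotheses of Theorem~\ref{thm:sufficient-uniform-semigroup} hold for the operator $A := -\Delta_D^2$ on $F = L^2(\Omega)$ with $G = F$, $E = F$, $T = I$, $u = \one_\Omega$, $\phi = \one_\Omega$, and $S$ the multiplication operator by $\one_K$; the theorem then yields the conclusion immediately. First I would record the standard facts about $\Delta_D$: it is self-adjoint, generates a contraction analytic semigroup of angle $\pi/2$ on $F$, and $\spec(\Delta_D) \subseteq (-\infty, 0)$. From this, arguing as in~\cite[Proposition~6.6]{DanersGlueckKennedy2016a}, $A = -\Delta_D^2$ generates a bounded analytic $C_0$-semigroup of angle $\pi/2$ on $F$. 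Since $A$ has compact resolvent (its spectrum is discrete), $\spb(A)$ is a pole of the resolvent, and $\spec(A) \subseteq (-\infty, 0)$ forces $\spb(A) < 0$; the known spectral structure (referenced via~\cite{Strauss2007}) gives that $\spb(A)$ is a simple pole, a dominant spectral value, and that $\ker(\spb(A) - A)$ is one-dimensional, spanned by a vector $x$ which we normalize to $x > 0$.

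Next I would establish the positivity-of-the-eigenvector conditions. By the Harnack inequality applied to the eigenfunction equation, $x(\omega) > 0$ for all $\omega \in \Omega$; since $x$ is continuous (or at least locally bounded away from zero on compacts, which is what Harnack gives) on the compact set $K \subseteq \Omega$, we get $\one_K x \geq c \one_K$ for some $c > 0$, i.e. $Sx \gg_{\one_K} 0$. Because $F$ is a Hilbert lattice, the Banach-space dual $A'$ coincides with the Hilbert-space adjoint $A^* = A$, so $\ker(\spb(A) - A') = \ker(\spb(A) - A) = \operatorname{span}\{x\}$, and identifying $x$ with a functional $\psi$ via the inner product, the same Harnack argument shows $T'\psi = \psi \gg_{\one_\Omega} 0$ (as $T = I$, $T'\phi = \one_\Omega$ is strictly positive). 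So hypotheses (a), (b), and the strong-positivity conditions on $Sx$ and $T'\psi$ are met.

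The remaining hypothesis is the smoothing condition (c): there exist $t_1, t_2 > 0$ with $Se^{t_1 A}F \subseteq F_{\one_K}$ and $T'e^{t_2 A'}F' \subseteq (F')_{\one_K}$. The key input here is that $\dom{A^n} \subseteq L^\infty(\Omega)$ for some $n \in \bbN$ (this is the ultracontractivity-type fact from~\cite[Page~33]{ArendtBenilan1999}). Since the semigroup is analytic, $e^{tA}F \subseteq \bigcap_m \dom{A^m} \subseteq \dom{A^n} \subseteq L^\infty(\Omega)$ for every $t > 0$; multiplying by $\one_K$ lands us in $F_{\one_K}$ (as $\one_K$ dominates the modulus of any $L^\infty$ function restricted to $K$, up to a constant). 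Since $A' = A$, the dual statement is identical. Finally, every analytic semigroup is norm continuous at infinity, so Theorem~\ref{thm:sufficient-uniform-semigroup} applies and gives uniform eventual strong positivity of $(Se^{tA})_{t \geq 0}$ with respect to $\one_K$. I do not anticipate a genuine obstacle: the main point requiring care is the cited ultracontractivity bound $\dom{A^n} \subseteq L^\infty(\Omega)$ for rough $\Omega$ — one must be sure that this holds for arbitrary bounded domains and not only smooth ones — but this is exactly what~\cite{ArendtBenilan1999} provides, and the rest is assembling references.
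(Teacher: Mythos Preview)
Your overall strategy—verifying the hypotheses of Theorem~\ref{thm:sufficient-uniform-semigroup} with $E=G=F=L^2(\Omega)$, $T=I$, and $S$ given by multiplication with $\one_K$—is exactly the paper's approach, and most of your verifications (analyticity, the spectral structure at $\spb(A)$, norm continuity at infinity, and the inclusion $Se^{t_1 A}F\subseteq F_{\one_K}$ via $\dom{A^n}\subseteq L^\infty$) are correct and match the paper.

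The gap is in the dual conditions. With your choice $\phi=\one_\Omega$ the requirement $T'\psi\gg_{T'\phi}0$ reads $x\gg_{\one_\Omega}0$, i.e.\ $\operatorname{ess\,inf}_\Omega x>0$. You claim ``the same Harnack argument'' yields this; it does not. Harnack's inequality gives a positive lower bound for $x$ only on \emph{compact} subsets of $\Omega$, not globally—the Dirichlet ground state is not bounded away from zero on $\Omega$ (already for $\Omega=(0,1)$ one has $x(y)=\sin(\pi y)$ with infimum $0$). So $\phi=\one_\Omega$ cannot serve here. The paper's route is instead to take $\phi=\psi=x$: this is why it stresses that $x$, viewed as a functional, is \emph{strictly positive} (which is precisely Assumption~\ref{ass:standing-assumptions}(c) for $T'\phi$), after which $T'\psi=x\gg_x 0$ is automatic. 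You should rework the dual side with that choice and verify the corresponding smoothing inclusion for $(F')_{T'\phi}$. (Note also that your written inclusion $T'e^{t_2A'}F'\subseteq (F')_{\one_K}$ is in any case a slip: $(F')_{\one_K}$ consists of bounded functions supported on $K$, and the semigroup certainly does not map into that space.)
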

 
 Since we did not impose any smoothness conditions on the boundary of $\Omega$, one cannot expect that the vectors in $e^{t_0 A} F$ are dominated by a multiple of the first eigenfunction. For this reason, the theory of eventual positivity in~\cite{DanersGlueckKennedy2016b} or~\cite{DanersGlueck2018b} is not applicable. However, the operator $S$ ensures that we stay away from the boundary and so the \emph{smoothing condition} is satisfied in our setting. This is why we were able to apply our abstract results even on rough domains.
 
\subsection*{A diffusion equation with non-autonomous boundary}

In this example, we consider a diffusion process on an interval equipped with non-autonomous boundary conditions. This model was considered in~\cite[Section~8.6]{Glueck2016} where \emph{asymptotic positivity} of the associated semigroup was shown.

Fix $r>0$ and let $f_b:[0,\infty) \to \bbC$ be a given function supported on $[0,r]$. Further, let $\beta_1,\beta_2\geq 0$ be such that $\beta_1+\beta_2 \leq 1$. For $f(0,\argument) \in L^1((0,1),\bbC)$, consider
\begin{equation*}
	\begin{split}
		\partial_t f(t,x)&=\partial_x^2 f(t,x), \quad x \in (0,1)\\
		\partial_x f(t,0)&=-\beta_1 f_b(t)\\
		\partial_x f(t,1)&=-\beta_2 f_b(t)
	\end{split}
\end{equation*}
on $L^1((0,1),\bbC)$. Then using methods similar to~\cite[Section~VI.7]{EngelNagel2000}, we can reformulate this as an abstract Cauchy problem
\[
	\frac{d}{dt}(f,f_b)=A(f,f_b)
\]
on $F:= L^1((0,1),\bbC) \times L^1((0,r),\bbC)$, where
\begin{equation}\label{eq:diffusion-domain}
	\begin{split}
		\dom{A}&= \left\{(f,f_b) \in W^{2,1}((0,1)) \oplus W^{1,1}((0,r)): \begin{array}{c}
														f'(0)=-\beta_1 f_b(0),\\
														 f'(1)=-\beta_2 f_b(0),\\
														 f_b(r)=0
												\end{array}\right\}\\
		A(f,f_b)&= (f'',f_b').
	\end{split}
\end{equation}
We endow $F$ with the norm $\norm{(f,f_b)}:=\norm{f}_{L^1((0,1),\bbC)}+\norm{f_b}_{L^1((0,r),\bbC)}$ for all ${(f,f_b) \in F}$. The following was proved in~\cite[Theorem~8.6.1]{Glueck2016}:

\begin{theorem}
	\label{thm:diffusion-gluck}
	The operator $A$  generates a contraction $C_0$-semigroup on $F$. Moreover, we have the following:
	\begin{enumerate}[\upshape (a)]
		\item The operator $A$ is dissipative and has compact resolvent.
		
		\item The spectrum of $A$ is given by $\{-k^2\pi^2 : k \in \bbN \cup \{0\}\}$.
		
		\item Both the eigenspaces $\ker A$ and $\ker A'$ are one-dimensional and contain the vector $(\one_{(0,1)},0)$ and the functional $(\one_{(0,1)},(\beta_1-\beta_2)\one_{(0,r)})$ respectively.
		
		\item The limit
			\[
				\lim_{t\to\infty}\dist{e^{tA}(f,f_b)}{F_+} = 0 \text{ for every } (f,f_b) \geq 0
			\]
 if and only if $\beta_1 \geq \beta_2$.
		
		\item The semigroup $(e^{tA})_{t\geq 0}$ is positive if and only if $\beta_2=0$.
	\end{enumerate}
\end{theorem}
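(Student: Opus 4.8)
The plan is to establish generation via Lumer--Phillips and then verify (a)--(e) by fairly direct computation, exploiting the decoupled structure of the two components.

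\textbf{Generation and (a).} I would first check that $A$ is dissipative on the $L^1$-type space $F$. Reducing to real-valued functions, pair $A(f,f_b)$ with an element of the sign subdifferential of $(f,f_b)$: one has $\int_0^r f_b'\,\sgn(f_b)=\modulus{f_b(r)}-\modulus{f_b(0)}=-\modulus{f_b(0)}$, and smoothing $\sgn$ and using $\int_0^1(f')^2\,(\sgn)'(f)\ge 0$ gives $\int_0^1 f''\,\sgn(f)\le f'(1)\sgn(f(1))-f'(0)\sgn(f(0))$; substituting the boundary conditions $f'(0)=-\beta_1 f_b(0)$, $f'(1)=-\beta_2 f_b(0)$ yields an upper bound $\le(\beta_1+\beta_2)\modulus{f_b(0)}-\modulus{f_b(0)}\le 0$, where the hypothesis $\beta_1+\beta_2\le 1$ is precisely what makes the estimate close. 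Surjectivity of $\lambda-A$ for $\lambda>0$ follows by solving the problem in the right order: first solve the linear first-order ODE $f_b'=\lambda f_b-g_b$ with $f_b(r)=0$ (this fixes $f_b$ and the number $f_b(0)$), then solve $\lambda f-f''=g$ on $(0,1)$ with the now-inhomogeneous Neumann data, which is uniquely solvable because $\lambda>0$ lies to the right of the spectrum of the Neumann Laplacian. Lumer--Phillips then gives the contraction semigroup, $A$ is closed and densely defined by routine arguments, and the resolvent is compact because $\dom{A}$ embeds continuously into $W^{2,1}((0,1))\oplus W^{1,1}((0,r))$ and hence compactly into $F$ (Rellich--Kondrachov).

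\textbf{(b) and (c).} Since the resolvent is compact, $\spec(A)=\pntSpec(A)$. For an eigenvalue $\mu$, the equation $f_b'=\mu f_b$ together with $f_b(r)=0$ forces $f_b\equiv 0$, so $f_b(0)=0$ and the boundary conditions degenerate to $f'(0)=f'(1)=0$; the eigenvalue problem reduces to $f''=\mu f$ on $(0,1)$ with homogeneous Neumann conditions, whose solutions are $\mu=-k^2\pi^2$, $f=\cos(k\pi x)$, $k\in\bbN\cup\{0\}$. In particular $\spb(A)=0$ is a dominant spectral value and $\ker A=\linSpan\{(\one_{(0,1)},0)\}$. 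As the semigroup is bounded, $0$ cannot be a pole of order $\ge 2$, so it is a simple pole; hence $\Ima P=\ker A$, $\Ima P'=\ker A'$, and $\dim\ker A'=\dim\ker A=1$. That $(\one_{(0,1)},(\beta_1-\beta_2)\one_{(0,r)})\in\ker A'$ is checked directly: integrating by parts, $\int_0^1 f''+(\beta_1-\beta_2)\int_0^r f_b'=(f'(1)-f'(0))+(\beta_1-\beta_2)(f_b(r)-f_b(0))=(\beta_1-\beta_2)f_b(0)-(\beta_1-\beta_2)f_b(0)=0$ for all $(f,f_b)\in\dom{A}$, so this functional annihilates $\Ima A$. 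Normalizing $x=(\one_{(0,1)},0)$ and $\psi=(\one_{(0,1)},(\beta_1-\beta_2)\one_{(0,r)})$, we get $\duality{\psi}{x}=1$, hence $P=x\otimes\psi$, i.e.\ $P(f,f_b)=\bigl(\int_0^1 f+(\beta_1-\beta_2)\int_0^r f_b\bigr)(\one_{(0,1)},0)$.

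\textbf{(d).} The key reduction is that for $t\ge r$ the boundary data vanishes: the $f_b$-component solves a pure left-transport equation with absorption at $r$ and initial datum supported in $(0,r)$, so $f_b(t,\argument)=0$ for $t\ge r$; consequently, for $t\ge r$ the first component solves the heat equation with \emph{homogeneous} Neumann conditions, i.e.\ $e^{tA}(f_0,f_{b,0})=\bigl(e^{(t-r)\Delta_N}\,\pi_1 e^{rA}(f_0,f_{b,0}),\,0\bigr)$, where $\Delta_N$ is the Neumann Laplacian on $(0,1)$ and $\pi_1(f,f_b)=f$. Since $(e^{s\Delta_N})_{s\ge 0}$ is analytic and compact, the semigroup is eventually norm continuous and eventually compact; together with boundedness and the fact that $0$ is a dominant simple pole, this gives $e^{tA}\to P$ in operator norm (the part on $\ker P$ has spectral radius $e^{-\pi^2}<1$ after time $r$). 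Now $\dist{e^{tA}(f,f_b)}{F_+}\le\norm{e^{tA}(f,f_b)-P(f,f_b)}+\dist{P(f,f_b)}{F_+}$, so asymptotic positivity holds as soon as $P\ge 0$; from the explicit form of $P$, $P\ge 0$ is equivalent to $\int_0^1 f+(\beta_1-\beta_2)\int_0^r f_b\ge 0$ for all $(f,f_b)\ge 0$, i.e.\ to $\beta_1\ge\beta_2$. Conversely, if $\beta_1<\beta_2$, take $(0,f_b)$ with $f_b>0$: then $P(0,f_b)=-c\,(\one_{(0,1)},0)$ with $c>0$, and since $\dist{\argument}{F_+}$ is $1$-Lipschitz, $\dist{e^{tA}(0,f_b)}{F_+}\to\dist{-c(\one_{(0,1)},0)}{F_+}>0$, so asymptotic positivity fails. \textbf{(e).} A $C_0$-semigroup is positive iff $\Res(\lambda,A)\ge 0$ for all sufficiently large real $\lambda$, so one analyses the resolvent computed above. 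Its $f_b$-component, $f_b(x)=\int_x^r e^{\lambda(x-s)}g_b(s)\,ds$, is $\ge 0$ when $g_b\ge 0$, with $f_b(0)\ge 0$. Writing $f=f_1+f_2$ with $f_1=(\lambda-\Delta_N)^{-1}g\ge 0$ and $f_2$ solving $\lambda f_2=f_2''$, $f_2'(0)=-\beta_1 f_b(0)$, $f_2'(1)=-\beta_2 f_b(0)$, an elementary $\cosh$/$\sinh$ computation gives $f_2(1)=\frac{f_b(0)(\beta_1-\beta_2\cosh\sqrt\lambda)}{\sqrt\lambda\,\sinh\sqrt\lambda}$ and, for $\beta_2=0$, $f_2(x)=\frac{\beta_1 f_b(0)\cosh(\sqrt\lambda(1-x))}{\sqrt\lambda\,\sinh\sqrt\lambda}\ge 0$; hence for $\beta_2=0$ the resolvent is positive for every $\lambda>0$ and the semigroup is positive. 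If $\beta_2>0$, then $f_2(1)<0$ for all large $\lambda$, so taking $g=0$ and $g_b>0$ shows $\Res(\lambda,A)(0,g_b)$ has a strictly negative first component for large $\lambda$, and the semigroup is not positive.

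\textbf{Expected main obstacle.} The delicate point is part (d): justifying $e^{tA}\to P$ rigorously in the non-reflexive $L^1$-setting. The clean route is the "$t\ge r$ decoupling" above, which upgrades the semigroup to an eventually compact, eventually norm-continuous one so that the spectral mapping theorem applies; one must verify that this reduction is exact. The estimates in (a) (the $L^1$-dissipativity bound, where $\beta_1+\beta_2\le 1$ enters) and in (e) (the sign of $f_2(1)$ for large $\lambda$) are elementary but must be done carefully.
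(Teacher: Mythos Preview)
The paper does not give its own proof of this theorem: it is quoted verbatim from \cite[Theorem~8.6.1]{Glueck2016} and used as input for the subsequent local eventual positivity result. There is therefore no in-paper argument to compare your proposal against.

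That said, your outline is correct and self-contained. The $L^1$-dissipativity estimate in (a) is exactly where $\beta_1+\beta_2\le 1$ enters, and the surjectivity argument (solve the first-order ODE for $f_b$ to fix $f_b(0)$, then the inhomogeneous Neumann problem for $f$) is the natural order. Your eigenvalue and dual-eigenvector computations in (b)--(c) check out by direct integration by parts. For (d), the decoupling observation ``$f_b(t,\argument)=0$ for $t\ge r$, after which the first component evolves by the Neumann heat semigroup'' is precisely the mechanism the paper itself exploits in the proof of the \emph{next} theorem (where it derives $Se^{(t+r)A}=e^{t\Delta_N}Se^{rA}$), so you have independently identified the key structural fact; this gives eventual compactness and operator-norm convergence $e^{tA}\to P$, sidestepping any non-reflexivity issues in $L^1$. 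Your $\cosh/\sinh$ resolvent computation for (e), yielding $f_2(1)=f_b(0)(\beta_1-\beta_2\cosh\sqrt{\lambda})/(\sqrt{\lambda}\sinh\sqrt{\lambda})$, is correct and cleanly isolates why $\beta_2>0$ destroys positivity for large $\lambda$.
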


The assertion (d) in the above theorem says that the semigroup $(e^{tA})_{t\geq 0}$ is \emph{individually asymptotically positive} (cf.~\cite[Definition~8.1]{DanersGlueckKennedy2016b}) if and only if ${\beta_1 \geq \beta_2}$. We show that for ${\beta_1 > \beta_2}$, the semigroup is eventually positive on the first component of the space $F$.

\begin{theorem}
	Let $F:= L^1((0,1),\bbC) \times L^1((0,r),\bbC)$ and $\beta_1>\beta_2 \geq 0$ be such that $\beta_1+\beta_2\leq 1$. Denote by $S: F \to L^1((0,1),\bbC)$, the projection onto the first component of $F$. Further, let $A$ be the operator given by \eqref{eq:diffusion-domain}. Then $S\Res(\argument,A)$ is individually strongly positive with respect to $\one_{(0,1)}$ at $\spb(A)$ and the family $(S e^{tA})_{t \geq 0}$ is individually eventually strongly positive with respect to $\one_{(0,1)}$.
\end{theorem}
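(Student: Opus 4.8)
The plan is to deduce the statement from Theorem~\ref{thm:sufficient-individual-semigroup-resolvent}, obtaining its strong-convergence hypothesis via Corollary~\ref{cor:sufficient-individual-semigroup-resolvent}. In the language of Assumptions~\ref{ass:standing-assumptions} I would take $E = F$, $G := L^1((0,1),\bbC)$, $T := I_F$, and $S$ as in the statement, with reference data $u := (\one_{(0,1)},0)$ — which spans $\ker A$ by Theorem~\ref{thm:diffusion-gluck}(c), so that we may set $x := u$ — and $\phi := \psi := (\one_{(0,1)}, (\beta_1-\beta_2)\one_{(0,r)})$, which spans $\ker A'$ by the same result. One then checks Assumptions~\ref{ass:standing-assumptions}: $A$ is real (a differential operator with real coefficients and real $\beta_1, \beta_2$) and, being a generator, closed and densely defined; $S$ and $T$ are positive; $Su = \one_{(0,1)}$ is positive in $G$; and $T'\phi = \phi$, acting on $F$ by $\duality{\phi}{(g,g_b)} = \int_0^1 g + (\beta_1-\beta_2)\int_0^r g_b$, is strictly positive — here the strict inequality $\beta_1 > \beta_2$ (and not merely $\beta_1 \geq \beta_2$) is precisely what is needed. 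Finally $\spb(A) = 0$ by Theorem~\ref{thm:diffusion-gluck}(b), it is an eigenvalue of $A$ with one-dimensional eigenspace, and $Sx = \one_{(0,1)} = Su \gg_{Su} 0$, $T'\psi = \phi = T'\phi \gg_{T'\phi} 0$ hold trivially.

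To apply Corollary~\ref{cor:sufficient-individual-semigroup-resolvent} one has to show $e^{tA}f \to 0$ as $t \to \infty$ for every $f \in \ker\psi$ (recall $\spb(A) = 0$). Since the semigroup is a contraction and $A$ has compact resolvent (Theorem~\ref{thm:diffusion-gluck}(a)), $0$ is a simple pole of $\Res(\argument, A)$ (Remark~\ref{rem:sufficient-individual}(a)); consequently $\ker\psi$ is $e^{tA}$-invariant and coincides with the range of the spectral projection complementary to $\ker A$, and the part of $A$ in $\ker\psi$ inherits the compact resolvent, so its spectrum equals $\spec(A) \setminus \{0\} = \{-k^2\pi^2 : k \in \bbN\}$, which is disjoint from $i\bbR$. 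As the restricted semigroup is bounded, the ABLV theorem (\cite[Theorem~2.4]{ArendtBatty88}, cf.~\cite[Theorem~V.2.21]{EngelNagel2000}) yields $e^{tA}f \to 0$ for all $f \in \ker\psi$. (One could instead argue by hand, using that the boundary-transport block is nilpotent, so $e^{tA}(g,g_b)$ has vanishing second component for $t \geq r$, after which the first component evolves under the Neumann heat semigroup starting from a mean-zero function and hence decays.) Thus the rescaled semigroup converges strongly and Corollary~\ref{cor:sufficient-individual-semigroup-resolvent} reduces everything to the two ideal inclusions below.

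For the resolvent, $S\dom{A} \subseteq W^{2,1}((0,1)) \hookrightarrow L^\infty((0,1)) = G_{\one_{(0,1)}}$ by the one-dimensional Sobolev embedding, and Theorem~\ref{thm:sufficient-individual-semigroup-resolvent}\ref{thm:sufficient-individual:resolvent} gives that $S\Res(\argument, A)$ is individually eventually strongly positive with respect to $\one_{(0,1)}$ at $\spb(A)$ (invoking Theorem~\ref{thm:sufficient-individual-resolvent} instead would additionally yield eventual negativity). For the semigroup, fix any $t_0 > r$; writing $t_0 = r + s$ with $s > 0$, the operator $e^{rA}$ maps $F$ into the closed $e^{sA}$-invariant subspace $\{(h,0) : h \in L^1((0,1))\}$ (the transport part has died out), the semigroup acts there as $(e^{s\Delta_N},0)$ with $\Delta_N$ the Neumann Laplacian on $(0,1)$, and $e^{s\Delta_N}$ maps $L^1((0,1))$ into $L^\infty((0,1))$ for $s > 0$ because the Neumann heat kernel is bounded; hence $S e^{t_0 A} F \subseteq L^\infty((0,1)) = G_{\one_{(0,1)}}$, and Theorem~\ref{thm:sufficient-individual-semigroup-resolvent}\ref{thm:sufficient-individual:semigroup} gives that $(S e^{tA})_{t \geq 0}$ is individually eventually strongly positive with respect to $\one_{(0,1)}$.

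I expect the main obstacle to be the smoothing inclusion $S e^{t_0 A}F \subseteq G_{\one_{(0,1)}}$: one cannot read it off from a single domination condition and must instead exploit the concrete structure of the coupled evolution, namely the nilpotency of the left-transport part together with the instantaneous regularization of the Neumann heat flow on the bulk. The remaining ingredients — the abstract set-up, the verification of Assumptions~\ref{ass:standing-assumptions}, and the strong convergence — are routine given Theorem~\ref{thm:diffusion-gluck} and the cited abstract results.
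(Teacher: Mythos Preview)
Your proposal is correct and follows essentially the same approach as the paper: apply Corollary~\ref{cor:sufficient-individual-semigroup-resolvent} with the choices you describe, verify strong convergence on $\ker\psi$ via the ABLV theorem, obtain the resolvent smoothing from the one-dimensional Sobolev embedding, and deduce the semigroup smoothing from the nilpotency of the transport block after time $r$ combined with the instantaneous $L^1\to L^\infty$ regularization of the Neumann heat semigroup. The paper spells out the last step more carefully---showing $Y=L^1((0,1))\times\{0\}$ is resolvent-invariant (hence semigroup-invariant via Post--Widder), identifying the restricted and quotient generators explicitly, and deriving the identity $Se^{(t+r)A}=e^{t\Delta_N}Se^{rA}$---but the mechanism is exactly the one you sketch.
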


\begin{proof}
	Let
	\[
		x :=(\one_{(0,1)},0) \in F\quad \text{and}\quad \psi :=(\one_{(0,1)},(\beta_1-\beta_2)\one_{(0,r)}) \in F'.
	\]
	By Theorem~\ref{thm:diffusion-gluck}, the operator $A$ has compact resolvent and $\spb(A)=0\in\spec(A)$. Therefore, $0$ is a pole of the resolvent $\Res(\argument,A)$. Moreover, the corresponding semigroup is bounded and both the spectrum of $A\restrict{\ker \psi}$ and the point spectrum of $\left(A\restrict{\ker \psi}\right)'$ do not intersect the imaginary axis. Hence in view of the ABLV theorem~\cite[Theorem~V.2.21]{EngelNagel2000}, we have that $e^{tA} \restrict{\ker \psi}$ converges strongly to $0$ as $t\to \infty$. Now $x \in \ker A$, the vector $Sx$ is strongly positive with respect to $\one_{(0,1)}$, and $\psi\in \ker A'$ is a strictly positive functional. Furthermore,
	\[ 
		S\dom{A} \subseteq W^{2,1}((0,1)) \subseteq C([0,1]) \subseteq L^{\infty}((0,1)) = F_{\one_{(0,1)}}.
	\]
	Accordingly, all the assumptions of Corollary~\ref{cor:sufficient-individual-semigroup-resolvent} are satisfied, which means that $S\Res(\argument,A)$ must be individually strongly positive with respect to $\one_{(0,1)}$ at $0$.
	
	To conclude that $(S e^{tA})_{t \geq 0}$ is individually eventually strongly positive with respect to $\one_{(0,1)}$, it suffices by Corollary~\ref{cor:sufficient-individual-semigroup-resolvent}, to show that there exists $t_0 >0$ such that ${S e^{t_0 A} \subseteq F_{\one_{(0,1)}}}$. To this end, we let $Y$ be the subspace $L^1((0,1)) \times \{0\}$ of $F$ and let $T$ denote the projection onto the second component of $F$. A simple argument shows that for $\lambda \in \rho(A)$, we have $T\Res(\lambda, A)(f,0)=0$ for every $(f,0)\in F$, and so $\Res(\lambda, A)$ leaves $Y$ invariant. We can now make use of the Post-Widder Inversion Formula ~\cite[Corollary~III.5.5]{EngelNagel2000} to infer that $Y$ is an $(e^{tA})_{t\geq 0}$-invariant closed subspace of $F$. Thus by~\cite[Paragraph~II.2.3]{EngelNagel2000}, we get that the generator of the restricted semigroup $\left(e^{tA} \restrict{Y}\right)_{t \geq 0}$ is given by 
\begin{equation*}
	\begin{split}
		D(A \restrict{Y}) &= \left\{(f,0) \in W^{2,1}((0,1)) \times \{0\}:f'(0)=f'(1)=0\right\}\\
		A \restrict{Y}(f,0)&=(f'',0).
	\end{split}
\end{equation*}
Hence if $\Delta_N$ denotes the Neumann-Laplace operator on $L^1((0,1))$, then
\begin{equation}
	\label{eq:smoo}
	S e^{tA} \restrict{Y}=e^{t\Delta_N}S\quad \text{for every }t \geq 0.
\end{equation}

Applying the arguments of~\cite[Paragraph~II.2.4]{EngelNagel2000} together with the observation $X/Y \cong \Ima T$, we obtain that the generator of the quotient semigroup $\left(e^{tA}/_Y\right)_{t \geq 0}$ is
\begin{equation*}
	\begin{split}
		D(A/_Y) &= \left\{f_b \in W^{1,1}((0,r)):f_b(r)=0\right\}\\
		A/_Y f_b&=f_b'.
	\end{split}
\end{equation*}
In other words, if $(e^{tB})_{t \geq 0}$ denotes the left shift semigroup on $ L^1((0,r))$, then $T e^{tA}=e^{tB}T$ for every $t \geq 0$. In particular, $Te^{rA} =e^{rB}T=0$ and so for $(f,f_b) \in F$, there exists $(g,0)\in Y$ such that $(g,0)=e^{rA}(f,f_b)$. Whence for every $t \geq 0$,
\[
	Se^{(t+r)A}(f,f_b)=Se^{tA}(g,0)=e^{t\Delta_N}g
\]
by \eqref{eq:smoo} and 
\[
	e^{t\Delta_N}Se^{rA}(f,f_b)=e^{t\Delta_N}S(g,0)=e^{t\Delta_N}g.
\]
Combining these yields
\[
	S e^{(t+r)A} = e^{t \Delta_N} S e^{rA},
\]
for every $t\geq 0$ which implies
\[
	S e^{(t+r)A} F \subseteq e^{t \Delta_N} L^1((0,1)) \subseteq L^{\infty}((0,1)) = F_{\one_{(0,1)}}
\]
for every $t>0$, as required.
\end{proof}

\section{Consequences of local eventual positivity I}
\label{section:consequences-i}

The rest of this article is devoted to proving the implications of local eventual positivity. While the characterizations of individually eventually positive semigroups given in~\cite[Section~5]{DanersGlueckKennedy2016b} provide various consequences of eventual positivity, several other properties were already proved in~\cite[Section~7]{DanersGlueckKennedy2016a}. We show that similar spectral properties are exhibited by locally eventually positive semigroups as well.

Recall from~\cite[Theorem~II.1.10]{EngelNagel2000} that, if $(e^{tA})_{t \geq 0}$ is a $C_0$-semigroup on a Banach space $F$, then the resolvent $\Res(\argument,A)$ can be represented as the Laplace transform of the semigroup on the half-plane to the right of $\gbd(A)$, i.e.,
\begin{equation}\label{eq:laplace}
	\Res(\lambda,A)f = \int_0^{\infty} e^{-\lambda t} e^{tA} f \, dt
\end{equation}
for all $f\in F$ and $\re \lambda>\gbd(A)$, where the integral exists as an improper Riemann integral. If $F$ is, in addition, a Banach lattice and the semigroup is positive, then it is well-known that \eqref{eq:laplace} is also valid for $\re \lambda >\spb(A)$ (see~\cite[Proposition~5.1.4 and Theorem~5.3.1]{ArendtBattyHieberNeubrander2011}). Moreover, it was proved in~\cite[Proposition~7.1]{DanersGlueckKennedy2016a} that this remains true when the semigroup is merely individually eventually positive. We now prove a version of this in our setting.

Let $E,F$, and $G$ be complex Banach lattices and let $(e^{tA})_{t \geq 0}$ be a real $C_0$-semigroup on $F$. If $S \in \calL(F,G)$ and $T\in \calL(E,F)$ are positive operators, then we call the family $(S e^{tA} T)_{t\geq 0}$ is individually eventually positive, if for every ${f\in E_+}$, there exists $t_0\geq 0$ such that $Se^{tA}T f \geq 0$ for all $t\geq t_0$. 

\begin{proposition}
	\label{prop:laplace-representation}
	Let $E,F$, and $G$ be complex Banach lattices and let $(e^{tA})_{t \geq 0}$ be a real $C_0$-semigroup on $F$. If $S \in \calL(F,G)$ and $T\in \calL(E,F)$ are positive operators such that the family $(S e^{tA} T)_{t\geq 0}$ is individually eventually positive, then 
	\[
		S \Res(\lambda,A) Tf = \int_0^{\infty} e^{-\lambda t} Se^{tA}Tf\, dt
	\]
for all $\re \lambda >\spb(A)$ and $f \in E$, where the integral converges as an improper Riemann integral.
\end{proposition}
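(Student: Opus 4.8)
The plan is to follow the scheme of \cite[Proposition~7.1]{DanersGlueckKennedy2016a}: the point is to extend the validity of the Laplace representation \eqref{eq:laplace}, sandwiched between $S$ and $T$, from the half-plane $\{\re\lambda>\gbd(A)\}$, where it is classical, to the larger half-plane $\{\re\lambda>\spb(A)\}$, which is contained in $\rho(A)$ by definition of the spectral bound. By linearity of both sides in $f$, and since every element of a complex Banach lattice is a linear combination of positive elements, it suffices to treat $0\le f\in E$. Fix such an $f$ and pick $t_0\ge 0$ with $Se^{tA}Tf\ge 0$ for all $t\ge t_0$. Write $g(t):=Se^{tA}Tf$, a continuous $G$-valued function which is positive for $t\ge t_0$, and split $\int_0^\infty=\int_0^{t_0}+\int_{t_0}^\infty$; the first summand is a proper integral and is entire in $\lambda$, so everything reduces to the tail $\int_{t_0}^\infty e^{-\lambda t}g(t)\,dt$.

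First I would run a scalar argument functional by functional. Let $\phi\in F'$ with $\phi\ge 0$ act on $G$ — more precisely take $\phi\in G'_{+}$ — and set $h(t):=\duality{\phi}{g(t)}$, which is continuous and satisfies $h(t)\ge 0$ for $t\ge t_0$. Sandwiching \eqref{eq:laplace} (applied to $Tf$) between $S$ and $\phi$ shows that, for $\re\lambda>\gbd(A)$, the Laplace transform of $t\mapsto\one_{[t_0,\infty)}(t)h(t)$ equals the function $\Psi_\phi(\lambda):=\duality{\phi}{S\Res(\lambda,A)Tf}-\int_0^{t_0}e^{-\lambda t}h(t)\,dt$. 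Now $\Psi_\phi$ is holomorphic on $\{\re\lambda>\spb(A)\}\subseteq\rho(A)$, and since $h\ge 0$ on $[t_0,\infty)$, the classical theorem that the abscissa of convergence of the Laplace transform of a positive function coincides with its abscissa of holomorphy (Pringsheim's theorem; see \cite[Section~1.5]{ArendtBattyHieberNeubrander2011}, and \cite[Proposition~7.1]{DanersGlueckKennedy2016a}) forces this Laplace integral to converge for every real $\lambda>\spb(A)$ and, by the identity theorem, to equal $\Psi_\phi(\lambda)$ there. Hence, for real $\lambda>\spb(A)$, we get $\int_{t_0}^\infty e^{-\lambda t}\duality{\phi}{g(t)}\,dt=\duality{\phi}{y}$ with $y:=S\Res(\lambda,A)Tf-\int_0^{t_0}e^{-\lambda t}g(t)\,dt\in G$, first for $\phi\in G'_+$ and then, via Jordan decomposition in $G'$, for every $\phi\in G'$.

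The next step is to upgrade this \emph{weak} statement to norm convergence in $G$. Fix a real $\lambda>\spb(A)$ and set $y_\tau:=\int_{t_0}^\tau e^{-\lambda t}g(t)\,dt$. Since $g\ge 0$ on $[t_0,\infty)$, the net $(y_\tau)_{\tau\ge t_0}$ is increasing in $G_+$, and by the previous paragraph it converges weakly to $y$. Here I invoke the elementary fact that an increasing net in a Banach lattice which converges weakly also converges in norm: if $\|y-y_\tau\|\not\to 0$, choose positive norming functionals of $y-y_\tau$ along a cofinal set; a weak$^*$-cluster point $\phi\ge 0$ of these then satisfies $\duality{\phi}{y-y_{\tau}}\ge\varepsilon$ for every $\tau$ (using that $y-y_\tau$ decreases), contradicting $y_\tau\to y$ weakly. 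Thus $\int_{t_0}^\infty e^{-\lambda t}g(t)\,dt$ converges in $G$ and equals $y$, which is the asserted identity for real $\lambda>\spb(A)$.

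Finally the complex case: for $\lambda$ with $\mu:=\re\lambda>\spb(A)$ and $\nu:=\im\lambda$, write $e^{-\lambda t}=e^{-\mu t}\cos(\nu t)-ie^{-\mu t}\sin(\nu t)$; since $g(t)\ge 0$ for $t\ge t_0$ and $|\cos(\nu t)|,|\sin(\nu t)|\le 1$, each Cauchy increment satisfies $\bigl\|\int_{\tau_1}^{\tau_2}e^{-\lambda t}g(t)\,dt\bigr\|\le 2\bigl\|\int_{\tau_1}^{\tau_2}e^{-\mu t}g(t)\,dt\bigr\|$, and the right-hand side tends to $0$ as $\tau_1\to\infty$ by the real case just established. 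Hence $\int_{t_0}^\infty e^{-\lambda t}g(t)\,dt$ converges in $G$ for every $\lambda$ with $\re\lambda>\spb(A)$, so both sides of the claimed identity are holomorphic on the connected open set $\{\re\lambda>\spb(A)\}$ — the left side as a convergent Laplace integral, the right because $\Res(\argument,A)$ is holomorphic on $\rho(A)$ — and they agree on $\{\re\lambda>\gbd(A)\}$ by \eqref{eq:laplace}; by the identity theorem they agree on all of $\{\re\lambda>\spb(A)\}$, and adding back $\int_0^{t_0}$ finishes the proof. The main obstacle is the third step: the scalar (Pringsheim) argument only yields the identity weakly, and passing to genuine norm convergence of the vector-valued integral in an arbitrary Banach lattice is precisely where the order structure — the increasing-net-plus-weak-convergence criterion — has to be brought in.
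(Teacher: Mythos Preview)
Your argument is correct, and it reaches the same conclusion as the paper, but the route differs in one essential respect. The paper applies the vector-valued Pringsheim theorem \cite[Theorem~1.5.3]{ArendtBattyHieberNeubrander2011} directly to the positive $G$-valued function $v(t)=Se^{(t+t_0)A}Tf$: since $\hol(\widehat v)=\abs(v)$ for positive functions with values in a Banach lattice, and since $\widehat v$ extends holomorphically past any real number larger than $\spb(A)$ (because $\Res(\argument,A)Tf$ does), one gets $\abs(v)\le\spb(A)$ in one stroke, with norm convergence of the improper integral built into the notion of $\abs(v)$. You instead reduce to scalars via positive functionals, use the classical Pringsheim theorem there, and then upgrade the resulting weak convergence of the increasing truncations $y_\tau$ to norm convergence by the order-theoretic lemma that a monotone, weakly convergent net in a Banach lattice is norm convergent; finally you handle complex $\lambda$ by a Cauchy-type estimate. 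In effect you are giving a self-contained proof of (the relevant instance of) \cite[Theorem~1.5.3]{ArendtBattyHieberNeubrander2011}. The paper's approach is shorter and cites the right black box; yours is more elementary and makes visible exactly where the lattice structure enters (namely, in the monotone-plus-weak-implies-norm step), at the cost of extra length.
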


\begin{proof}
	Let $f \in E$. Since the positive cone of a Banach lattice is generating, we may assume $f\geq 0$. Then due to the individual eventual positivity of $(S e^{tA} T)_{t\geq 0}$, there exists $t_0 \geq 0$ such that $Se^{tA}Tf \geq 0$ for all $t \geq t_0$. Define $v:[0,\infty)\to E$ by ${v(t)=S e^{(t+t_0)A}Tf}$ for all $t\geq 0$. As $v$ is a positive function, so by~\cite[Theorem~1.5.3]{ArendtBattyHieberNeubrander2011}, the abscissa of holomorphy of its Laplace integral $\widehat{v}$ is equal to its abscissa of convergence, i.e., $\hol\left(\widehat{v}\right)=\abs(v)$. Thus, using the identity theorem for analytic functions, we obtain
	\[
		S\Res(\lambda,A)Tf=\int_0^{t_0} e^{-\lambda t} Se^{tA}Tf\, dt + e^{-t_0 \lambda} \widehat{v}(\lambda) \text{ for all } \re \lambda > \abs(v).
	\]
	
	Now if $\abs(v) >\spb(A)$, then $S\Res(\argument,A)Tf$ will have a singularity at $\abs(v)$, again due to~\cite[Theorem~1.5.3]{ArendtBattyHieberNeubrander2011}. This would further imply that $\Res(\argument,A)Tf$ has a singularity at $\abs(v)$, as $S$ is a bounded operator. Therefore, in this case $\abs(v)$ would be a spectral value of $A$ which is a contradiction to $\abs(v) > \spb(A)$. Consequently, we must have $\abs(v)\leq \spb(A)$, which leads to the assertion.
\end{proof}

The following generalization of~\cite[Corollary~7.3]{DanersGlueckKennedy2016a} now follows in a facile manner.

\begin{corollary}
	\label{cor:laplace-representation}
	Let $E,F$, and $G$ be complex Banach lattices and let $(e^{tA})_{t \geq 0}$ be a real $C_0$-semigroup on $F$. Further, let $S \in \calL(F,G)$ and $T\in \calL(E,F)$ be positive operators such that the family $(S e^{tA} T)_{t\geq 0}$ is individually eventually positive. If $\spb(A)>-\infty$, then
	\[
		\lim_{\lambda \downarrow \spb(A)} (\lambda-\spb(A)) \dist{S\Res(\lambda,A)Tf}{G_+} =0
	\]
	for every $f \in E_+$.
\end{corollary}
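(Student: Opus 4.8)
The plan is to combine the Laplace-transform representation of Proposition~\ref{prop:laplace-representation} with a split-integral estimate, following the proof of~\cite[Corollary~7.3]{DanersGlueckKennedy2016a}.

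First I would fix $f\in E_+$ and use individual eventual positivity of the family $(Se^{tA}T)_{t\geq 0}$ to pick $t_0\geq 0$ with $Se^{tA}Tf\geq 0$ for all $t\geq t_0$. For a real number $\lambda>\spb(A)$, Proposition~\ref{prop:laplace-representation} (applied to this $f\in E$) gives
\[
	S\Res(\lambda,A)Tf=\underbrace{\int_0^{t_0}e^{-\lambda t}Se^{tA}Tf\dx t}_{=:\,g_1(\lambda)}+\underbrace{\int_{t_0}^{\infty}e^{-\lambda t}Se^{tA}Tf\dx t}_{=:\,g_2(\lambda)},
\]
with both integrals converging as improper Riemann integrals. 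I would then observe that $g_2(\lambda)\in G_+$: being an improper Riemann integral of a continuous $G_+$-valued function, it is a limit of Riemann sums lying in the closed convex cone $G_+$. Since $h+g_2(\lambda)\in G_+$ for every $h\in G_+$ and $0\in G_+$, this yields
\[
	\dist{S\Res(\lambda,A)Tf}{G_+}\leq\dist{g_1(\lambda)}{G_+}\leq\norm{g_1(\lambda)}.
\]

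The remaining task is a uniform bound on $\norm{g_1(\lambda)}$ for $\lambda$ close to $\spb(A)$. For $t\in[0,t_0]$ and $\lambda>\spb(A)$ one has $-\lambda t\leq\modulus{\spb(A)}\,t\leq\modulus{\spb(A)}\,t_0$, so $e^{-\lambda t}\leq M:=e^{\modulus{\spb(A)}\,t_0}$, while $N:=\sup_{t\in[0,t_0]}\norm{Se^{tA}Tf}$ is finite by strong continuity of the semigroup together with boundedness of $S$ and $T$; hence $\norm{g_1(\lambda)}\leq MNt_0$, a constant independent of $\lambda$. Multiplying by $\lambda-\spb(A)$ and letting $\lambda\downarrow\spb(A)$ then gives the claim by a squeeze, the lower bound being the trivial $\dist{\argument}{G_+}\geq 0$. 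I do not expect a genuine obstacle here: the only subtleties are that Proposition~\ref{prop:laplace-representation} must be invoked for \emph{real} $\lambda$, so that the order-theoretic splitting is meaningful, and that $g_2(\lambda)$ really defines a positive vector of $G$, which is immediate from closedness of $G_+$ and convergence of the approximating Riemann sums.
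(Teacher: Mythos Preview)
Your proof is correct and is precisely the argument the paper has in mind: the paper does not spell out a proof but indicates that the corollary follows readily from Proposition~\ref{prop:laplace-representation} along the lines of~\cite[Corollary~7.3]{DanersGlueckKennedy2016a}, and that is exactly what you have done.
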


In~\cite[Theorem~7.6]{DanersGlueckKennedy2016b}, it was shown that \emph{asymptotic positivity} of the resolvent of a closed operator at a simple pole is equivalent to positivity of the  spectral projection 
associated to that pole. A version of this can be formulated in our current setting. The proof is similar, but for the convenience of the reader, we include it.

\begin{proposition}
	\label{prop:characterization-asymptotic-resolvent}
	Let $E,F$, and $G$ be complex Banach lattices and let $\lambda_0$ be a real spectral value of a closed, densely defined, and real operator $A$ on $F$ and also a simple pole of the resolvent $\Res(\argument,A)$. Further, let $S \in \calL(F,G)$ and $T\in \calL(E,F)$ be positive operators. The following are equivalent.
	\begin{enumerate}[ref=(\roman*)]
		\item\label{prop:characterization-asymptotic-resolvent:projection} The spectral projection $P$ associated to $\lambda_0$ satisfies $SPT \geq 0$.
		
		\item\label{prop:characterization-asymptotic-resolvent:asymptotic-positive} For each $f \in E_+$, we have $(\lambda-\lambda_0)\dist{S\Res(\lambda,A) Tf}{G_+} \to 0$ as $\lambda \downarrow \lambda_0$.
		
		\item\label{prop:characterization-asymptotic-resolvent:asymptotic-bounded} There exist $\lambda_1>\lambda_0$ and $C \geq 0$ such that $(\lambda_0,\lambda_1]\subseteq \rho(A)$ and\linebreak ${\dist{S\Res(\lambda,A)T f}{G_+}\leq C \norm{f}}$ for all $f\in E_+$ and $\lambda \in (\lambda_0,\lambda_1]$.
	\end{enumerate}
\end{proposition}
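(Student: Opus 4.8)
The plan is to prove the cyclic chain $\ref{prop:characterization-asymptotic-resolvent:projection} \Rightarrow \ref{prop:characterization-asymptotic-resolvent:asymptotic-bounded} \Rightarrow \ref{prop:characterization-asymptotic-resolvent:asymptotic-positive} \Rightarrow \ref{prop:characterization-asymptotic-resolvent:projection}$, mimicking the argument of \cite[Theorem~7.6]{DanersGlueckKennedy2016b}. After replacing $A$ by $A - \lambda_0$ we may assume $\lambda_0 = 0$. The only structural input needed is the Laurent expansion of the resolvent at a simple pole: writing $B(\lambda) := \Res(\lambda,A) - \tfrac1\lambda P$, the operator-valued map $B(\argument)$ extends holomorphically across $0$, so we can fix $\lambda_1 > 0$ with $(0,\lambda_1] \subseteq \rho(A)$ and $C_0 := \sup_{\lambda \in (0,\lambda_1]}\norm{B(\lambda)} < \infty$; equivalently, $\lambda\Res(\lambda,A) \to P$ in operator norm as $\lambda \downarrow 0$ (see \cite[Proposition~4.3.15]{ArendtBattyHieberNeubrander2011}). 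This, together with homogeneity and continuity of the distance to the positive cone, is all the machinery the argument uses.

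For $\ref{prop:characterization-asymptotic-resolvent:projection} \Rightarrow \ref{prop:characterization-asymptotic-resolvent:asymptotic-bounded}$ I would argue that, for $f \in E_+$, positivity of $SPT$ gives $\tfrac1\lambda SPTf \in G_+$ whenever $\lambda > 0$, so $\dist{S\Res(\lambda,A)Tf}{G_+} \leq \norm{S\Res(\lambda,A)Tf - \tfrac1\lambda SPTf} = \norm{SB(\lambda)Tf} \leq \norm{S}\,C_0\,\norm{T}\,\norm{f}$ for all $\lambda \in (0,\lambda_1]$; thus $C := \norm S\,\norm T\,C_0$ does the job. The implication $\ref{prop:characterization-asymptotic-resolvent:asymptotic-bounded} \Rightarrow \ref{prop:characterization-asymptotic-resolvent:asymptotic-positive}$ is then immediate, since $\lambda\,\dist{S\Res(\lambda,A)Tf}{G_+} \leq \lambda C \norm f \to 0$ as $\lambda \downarrow 0$.

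The implication $\ref{prop:characterization-asymptotic-resolvent:asymptotic-positive} \Rightarrow \ref{prop:characterization-asymptotic-resolvent:projection}$ is where I expect the (mild) difficulty. Fix $f \in E_+$. From the operator-norm convergence $\lambda\Res(\lambda,A) \to P$ one gets $\lambda S\Res(\lambda,A)Tf \to SPTf$ in $G$ as $\lambda \downarrow 0$. Using the positive homogeneity $\dist{c\,x}{G_+} = c\,\dist{x}{G_+}$ for $c > 0$, hypothesis \ref{prop:characterization-asymptotic-resolvent:asymptotic-positive} says precisely that $\dist{\lambda S\Res(\lambda,A)Tf}{G_+} \to 0$; since $x \mapsto \dist{x}{G_+}$ is continuous (indeed $1$-Lipschitz), letting $\lambda \downarrow 0$ forces $\dist{SPTf}{G_+} = 0$, and closedness of $G_+$ yields $SPTf \geq 0$. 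As $f \in E_+$ was arbitrary, $SPT \geq 0$. The points to watch are therefore the bookkeeping around homogeneity and continuity of $\dist{\argument}{G_+}$ and the fact that the convergence $\lambda\Res(\lambda,A)\to P$ holds in operator norm (not merely strongly) — which is exactly what makes the simple-pole hypothesis indispensable in this direction.
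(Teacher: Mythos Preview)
Your proof is correct and follows essentially the same approach as the paper's. The only cosmetic difference is that you write the ``regular part'' as $B(\lambda) = \Res(\lambda,A) - \tfrac1\lambda P$, while the paper writes it as $\Res(\lambda,A)(I-P)$; these coincide because the simple-pole identity $\lambda\Res(\lambda,A)P = P$ gives $\Res(\lambda,A)P = \tfrac1\lambda P$, so $\Res(\lambda,A)(I-P) = \Res(\lambda,A) - \tfrac1\lambda P$. One minor remark: in your closing comment, the simple-pole hypothesis is needed so that $\lambda\Res(\lambda,A)$ converges \emph{at all} (a higher-order pole would make it blow up), not specifically to upgrade strong to norm convergence --- for the implication \ref{prop:characterization-asymptotic-resolvent:asymptotic-positive}~$\Rightarrow$~\ref{prop:characterization-asymptotic-resolvent:projection} strong convergence would already suffice since $f$ is fixed.
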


\begin{proof}
	Of course \ref{prop:characterization-asymptotic-resolvent:asymptotic-bounded} implies \ref{prop:characterization-asymptotic-resolvent:asymptotic-positive}. Moreover, as $\lambda_0$ is a simple pole of the resolvent $\Res(\argument,A)$, we have
	\[
		\dist{SPTf}{G_+} = \lim_{\lambda \downarrow \lambda_0} \dist{S(\lambda-\lambda_0)\Res(\lambda,A)Tf}{G_+}
	\]
	for every $f \in E$ and whence \ref{prop:characterization-asymptotic-resolvent:asymptotic-positive} implies \ref{prop:characterization-asymptotic-resolvent:projection}. For the remainder of the proof, assume without loss of generality that $\lambda_0=0$. 
	
	\Implies{prop:characterization-asymptotic-resolvent:projection}{prop:characterization-asymptotic-resolvent:asymptotic-bounded} Since $0$ is a pole of the resolvent $\Res(\argument,A)$, we can choose $\epsilon>0$ such that $(0,\epsilon] \subseteq \rho(A)$. In addition, as the order of $0$ as a pole of the resolvent $\Res(\argument,A)$ is $1$, we have ${\lambda \Res(\lambda,A)P =P}$ which gives that
	\[
		S\Res(\lambda,A)T=\lambda^{-1}SPT+S\Res(\lambda,A)(I-P)T
	\]
	for all $\lambda \in \rho(A)$. Using ${SPT \geq 0}$, we obtain
	\[
		\dist{S\Res(\lambda,A)Tf}{G_+} \leq \norm{S\Res(\lambda,A)(I-P)Tf}
	\]
	for all $f \in E_+$ and $\lambda \in (0,\epsilon)$. Setting
	\[
		C:=\sup_{\lambda \in (0,\epsilon)} \norm{S \Res(\lambda,A) (I-P) T} <\infty,
	\]
	the conclusion follows.
\end{proof}

Various Kre\u{\i}n-Rutman type theorems about the existence of  positive eigenvectors (as a result of eventual positivity) have been proved in~\cite{DanersGlueckKennedy2016a,Glueck2017,DanersGlueck2017}. For instance, it was shown in~\cite[Theorem~7.7(i)]{DanersGlueckKennedy2016a} that if $(e^{tA})_{t\geq 0}$ is an individually eventually positive semigroup on a complex Banach lattice such that $\spb(A)>-\infty$ is a pole of the resolvent $\Res(\argument,A)$, then it is an eigenvalue of $A$ and the corresponding eigenspace contains a positive eigenvector. In~\cite[Section~6]{Glueck2017}, similar results were proved for both eventually and asymptotically positive operators. We end this section with a similar result along the lines of~\cite[Theorem~3.1]{DanersGlueck2017}, which in particular says:~suppose $A$ is a closed and densely defined linear operator on a complex Banach lattice $F$ and there exists a real spectral value $\lambda_0$ of $A$, which is also a pole of the resolvent. If ${(\lambda-\lambda_0)\dist{\Res(\lambda,A)f}{F_+}\to 0}$ as $\lambda \downarrow \lambda_0$, then $\lambda_0$ is an eigenvalue of both $A$ and $A'$ and the corresponding eigenspaces contain a positive, non-zero vector.

\begin{proposition}
	\label{prop:sufficient-existence-vectors}
	Let $E,F$, and $G$ be complex Banach lattices and let  $\lambda_0$ be a real spectral value of a closed, densely defined, and real operator $A$ on $F$ such that $\lambda_0$ is also a pole of the resolvent $\Res(\argument,A)$ of order, say $m\in \bbN$. Further, let $S \in \calL(F,G)$ and $T\in \calL(E,F)$ be positive operators such that for each $f\in E_+$, we have 
	\begin{equation}\label{eq:asymptotic-resolvent}
		\lim_{\lambda \downarrow \lambda_0}(\lambda-\lambda_0)\dist{S\Res(\lambda,A) Tf}{G_+} = 0.
	\end{equation}
	 Denote by $U_{-m}\in \calL(F)$, the coefficient of $(\lambda-\lambda_0)^{-m}$ in the Laurent series expansion of $\Res(\argument,A)$ about $\lambda_0$. If $SU_{-m}T \neq 0$, then there exists a vector $x$ in the eigenspace $\ker(\lambda_0-A)$ and a functional $\psi$ in the dual eigenspace $\ker(\lambda_0-A')$ such that both $Sx$ and $T'\psi$ are positive and non-zero.
\end{proposition}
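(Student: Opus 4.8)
The plan is to prove that the bounded operator $SU_{-m}T \in \calL(E,G)$ is positive; since it is non-zero by hypothesis, one can then simply read off $x$ from its range and $\psi$ from the range of its adjoint. As usual, assume without loss of generality that $\lambda_0 = 0$ (replace $A$ by $A - \lambda_0$). First I would recall the structure of the Laurent expansion at a pole of order $m$: writing $\Res(\lambda, A) = \sum_{n \ge -m} \lambda^n U_n$ for small $\modulus{\lambda}>0$, one has $U_{-1} = P$ (the spectral projection associated with $0$) and, using that $P$ commutes with $A$ and $\Ima P \subseteq \dom{A^k}$ for all $k$, that $U_{-m} = A^{m-1}P$ with $A^m P = 0$; in particular $A\,U_{-m} = A^m P = 0$, so $\Ima U_{-m} \subseteq \ker A$. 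Multiplying the operator-norm convergent Laurent series by $\lambda^m$ gives $\lambda^m \Res(\lambda,A) = U_{-m} + \lambda U_{-m+1} + \dots \to U_{-m}$ in $\calL(F)$ as $\lambda \to 0$, hence $\lambda^m S\Res(\lambda,A)T \to SU_{-m}T$ in $\calL(E,G)$ as $\lambda \downarrow 0$.

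The central step is to deduce positivity of $SU_{-m}T$ from \eqref{eq:asymptotic-resolvent}. Fix $f \in E_+$. Since $v \mapsto \dist{v}{G_+}$ is $1$-Lipschitz (hence continuous) and positively homogeneous, and $\lambda^m S\Res(\lambda,A)Tf \to SU_{-m}Tf$ in norm,
\begin{align*}
	\dist{SU_{-m}Tf}{G_+} &= \lim_{\lambda \downarrow 0}\dist{\lambda^m S\Res(\lambda,A)Tf}{G_+}\\
		&= \lim_{\lambda \downarrow 0}\lambda^{m-1}\bigl(\lambda\,\dist{S\Res(\lambda,A)Tf}{G_+}\bigr) = 0,
\end{align*}
because $m \ge 1$ keeps $\lambda^{m-1}$ bounded near $0$ while the bracketed factor tends to $0$ by \eqref{eq:asymptotic-resolvent}. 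As $G_+$ is closed, $SU_{-m}Tf \in G_+$; since $f \in E_+$ was arbitrary and $E_+$ is generating, $SU_{-m}T \ge 0$.

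It remains to extract the eigenvectors. As $SU_{-m}T \ne 0$ and $E_+$ is generating, there is $f \in E_+$ with $SU_{-m}Tf \ne 0$; set $x := U_{-m}Tf$, so $x \in \Ima U_{-m} \subseteq \ker A$ and $Sx = SU_{-m}Tf$ is positive and non-zero (hence $x \ne 0$). For the functional, note that the adjoint $(SU_{-m}T)' = T'U_{-m}'S'$ is again positive and non-zero. Moreover $\rho(A) = \rho(A')$ and $\Res(\lambda,A') = \Res(\lambda,A)'$, so transposing the Laurent series above shows that $0$ is a pole of $\Res(\argument,A')$ of the same order $m$ with leading coefficient $U_{-m}'$; by the same reasoning applied to $A'$, $\Ima U_{-m}' \subseteq \ker A'$. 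Since the positive cone of $G'$ is generating, choose $\mu \in (G')_+$ with $T'U_{-m}'S'\mu \ne 0$ and put $\psi := U_{-m}'S'\mu \in \ker A'$; then $T'\psi = T'U_{-m}'S'\mu$ is positive and non-zero, as required (for general $\lambda_0$, replace $\ker A$ and $\ker A'$ by $\ker(\lambda_0-A)$ and $\ker(\lambda_0-A')$).

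The argument is essentially bookkeeping around the Laurent expansion; the only point needing care — and the point where the hypothesis \eqref{eq:asymptotic-resolvent} is used exactly at its strength — is the limit $\lambda^m\dist{S\Res(\lambda,A)Tf}{G_+} \to 0$, which succeeds precisely because the pole order $m$ is at least $1$, so the surplus factor $\lambda^{m-1}$ is harmless.
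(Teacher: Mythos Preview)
Your proof is correct and follows essentially the same route as the paper's: show $SU_{-m}T\ge 0$ via the operator-norm convergence $(\lambda-\lambda_0)^m S\Res(\lambda,A)T\to SU_{-m}T$ combined with the hypothesis \eqref{eq:asymptotic-resolvent}, then pick the eigenvector from $\Ima(U_{-m}T)\subseteq\ker(\lambda_0-A)$ and the eigenfunctional from $\Ima(U_{-m}'S')\subseteq\ker(\lambda_0-A')$. You spell out a bit more detail than the paper (the Laurent identities $U_{-m}=A^{m-1}P$, $AU_{-m}=0$, and the Lipschitz/positive-homogeneity properties of $\dist{\argument}{G_+}$), but the argument is the same.
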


\begin{proof}
	Since $\lambda_0$ is a pole of the resolvent $\Res(\argument,A)$, it is an eigenvalue of both $A$ and $A'$ with $\Ima U_{-m} \subseteq \ker(\lambda_0-A)$ and $\Ima U_{-m}' \subseteq \ker(\lambda_0-A')$. Moreover, ${(\lambda-\lambda_0)^m \Res(\lambda,A)\to U_{-m}}$ in the operator norm topology as $\lambda \downarrow \lambda_0$ (all these results can be found, for instance in~\cite[Paragraph~IV.1.7]{EngelNagel2000}). 
	
	Therefore we also have ${(\lambda-\lambda_0)^m S\Res(\lambda,A)T\to SU_{-m}T}$ with respect to the operator norm as $\lambda \downarrow \lambda_0$. Hence if $SU_{-m}T \neq 0$, then because of \eqref{eq:asymptotic-resolvent} and the previous convergence, it is both positive and non-zero. Since range of a positive non-zero operator always contains a positive non-zero vector, we have the existence of a vector ${x \in U_{-m}T \subseteq \ker(\lambda_0-A)}$ such that $Sx >0$. The second assertion follows immediately by considering the positive non-zero operator $T'U_{-m}' S'=(SU_{-m}T)'$ and using ${\Res(\argument,A')=\Res(\argument,A)'}$.
\end{proof}

\begin{remark}
	\label{rem:sufficient-existence-vectors}
	In comparison to~\cite[Theorem~3.1]{DanersGlueck2017}, we needed in Proposition~\ref{prop:sufficient-existence-vectors} the additional assumption that $SU_{-m}T \neq 0$ to prove the existence of positive eigenvectors, as this may not be true in general. However, assume that $E=F=G$ and let $(S_n)$ and $(T_n)$ be sequences of band projections on $E$, converging strongly to the identity operator such that \eqref{eq:asymptotic-resolvent} holds for each $S_n$ and $T_n$, i.e., 
	\[
		\lim_{\lambda \downarrow \lambda_0} (\lambda-\lambda_0)\dist{S_n \Res(\lambda,A)T_n f}{E_+}=0
	\]
	for each $f\geq 0$ and $n\in \bbN$. Then proceeding as in the proof of Proposition~\ref{prop:sufficient-existence-vectors}, it can be shown that $S_n U_{-m} T_n$ is positive for large $n \in \bbN$ and consequently $U_{-m}$ is positive. Therefore there exists a positive non-zero vector $v \in E$ such that $x:=U_{-m}v \in \ker(\lambda_0-A)$ is positive and non-zero. By the same reasoning, $\ker(\lambda_0-A')$ contains a positive non-zero functional. This observation will play an crucial role in Section~\ref{section:consequences-ii}.
\end{remark}

\section{Consequences of local eventual positivity II}
\label{section:consequences-ii}

In this last section, we continue proving the implications of local eventual positivity but obtain stronger conclusions. For instance, we proved the existence of eigenvectors in Proposition~\ref{prop:sufficient-existence-vectors}, but could only talk about the positivity of the vector $Sx$. However, a stronger assertion would be strong positivity of $x$. Naturally, one cannot expect that strong positivity of $S\Res(\argument,A)T$ or of a family $(S e^{tA} T)_{t\geq 0}$ implies existence of strongly positive eigenvectors of $A$. However, Remark~\ref{rem:sufficient-existence-vectors} suggests that if we have strong positivity of $S_n\Res(\argument,A)T_n$ or the families $(S_n e^{tA} T_n)_{t\geq 0}$, where $(S_n)$ and $(T_n)$ are sequences of band projections converging strongly to the identity operator, then one can expect stronger assertions.

Let $\Omega \subseteq \bbR^d$ be a non-empty bounded domain and let $\Delta_D$ be the Dirichlet Laplacian on $L^2(\Omega)$. Consider a sequence of compact subsets $K_n\subseteq K_{n+1}\subseteq \Omega$ such that $\Omega = \bigcup_{n\in\bbN} K_n$. For each $n\in \bbN$, let $S_n \in \calL(L^2(\Omega))$ be defined as $S_n f :=\one_{K_n}f$ for all $f\in \calL(L^2(\Omega))$. Then $(S_n)$ is a sequence of band projections on $L^2(\Omega)$ converging strongly to the identity operator and the family $(S_ne^{-t\Delta_D^2})_{t\geq 0}$ is eventually strongly positive with respect to $\one_{K_n}$ (Theorem~\ref{thm:square-dirichlet-laplacian}). Let $x$ be an eigenvector corresponding to $\spb(-\Delta_D^2)$. Then using $e^{-t\Delta_D^2}x = e^{t \spb(-\Delta_D^2)} x$ for all $t\geq 0$, it follows that if $x$ is positive, then it is, in fact, strongly positive with respect to $\one_{K_n}$ for each $n\in\bbN$. As a consequence, $x$ is a quasi-interior point of $L^2(\Omega)$. Whereas this argument does not provide us with any new information, it does give us an intuition why considering a sequence of band projections, as mentioned above, is the right setting to obtain meaningful results.

Recall that a projection $S$ on a Banach lattice is called a \emph{band projection} if both $S$ and $I-S$ are positive. Intuitively, band projections correspond to picking a part of a function that is supported on a certain subset of the domain in a $\sigma$-finite measure space $(\Omega,\mu)$. For example, if $p\in [1,\infty]$, then by Riesz decomposition theorem (see~\cite[Theorem~II.2.10]{Schaefer1974} or~\cite[Theorem~1.2.9]{Meyer-Nieberg1991}), a projection $S$ is a band projection on $L^p(\Omega,\mu)$ if and only if there exists a measurable set $A \subseteq \Omega$ such that ${\Ima S= \{f \in L^p(\Omega,\mu) : f \restrict{A}=0 \text{ a.e.}\}}$. A limitation of this section is that spaces such as $C(K)$ do not possess any non-trivial band projections when $K$ is compact, Hausdorff, and connected~\cite[Example 5 on p. 63]{Schaefer1974}. Nevertheless, the variety of examples --  each operator $S$ in Section~\ref{section:applications} was actually a band projection -- suggest that the particular case of band projections is worthwhile to look into.

If $(e^{tA})_{t\geq 0}$ is a positive semigroup  on a Banach lattice $E$, then~\cite[Remark~B-III.2.15(a)]{Nagel1986} gives equivalent conditions for the spectral bound $\spb(A)$ to be a simple pole of the resolvent $\Res(\argument,A)$. In particular, it is shown that $\spb(A)$ is a simple pole if either $\ker(\spb(A)-A)$ contains a quasi-interior point or $\ker(\spb(A)-A')$ contains a strictly positive functional. In fact, as a consequence of~\cite[Theorem~5.2 and Corollary~3.3]{DanersGlueckKennedy2016b}, this is also true (under technical assumptions) if $(e^{tA})_{t\geq 0}$ is merely individually eventually positive. As a first result in this section, we prove a generalization of~\cite[Remark~B-III.2.15(a)]{Nagel1986}; see also~\cite[Proposition~7.9]{DanersGlueckKennedy2016b}.

\begin{proposition}
	\label{prop:characterization-simple-pole}
	Let $A$ be a closed, densely defined, and real operator on a complex Banach lattice $E$ such that $\lambda_0$ is a real spectral value of $A$ and a pole of the resolvent $\Res(\argument,A)$. Further, let $(S_n)$ and $(T_n)$ be two sequences of band projections on $E$ converging to the identity operator in the strong operator topology. If we have the convergence ${(\lambda-\lambda_0)\dist{S_n\Res(\lambda,A) T_nf}{E_+} \to 0}$ as $\lambda \downarrow \lambda_0$ for every $f \in E_+$ and for every $n\in \bbN$, then the following assertions are equivalent.
	\begin{enumerate}[ref=(\roman*)]
		\item\label{prop:characterization-simple-pole:pole} The spectral value $\lambda_0$ is a simple pole of the resolvent $\Res(\argument,A)$. 
		
		\item\label{prop:characterization-simple-pole:vector} For every positive and non-zero vector $x$ in the eigenspace $\ker(\lambda_0-A)$, there exists a positive functional $\psi$ in the dual eigenspace $\ker(\lambda_0-A')$ such that $\duality{\psi}{x}>0$. 
		
		\item\label{prop:characterization-simple-pole:functional} For every positive and non-zero functional $\psi$ in the eigenspace ${\ker(\lambda_0-A')}$, there exists a positive vector $x$ in the eigenspace $\ker(\lambda_0-A)$ such that $\linebreak{\duality{\psi}{x}>0}$.
	\end{enumerate}
	In particular, the spectral value $\lambda_0$ is a simple pole of the resolvent $\Res(\argument,A)$ if either $\ker(\lambda_0-A)$ contains a quasi-interior point or $\ker(\lambda_0-A')$ contains a strictly positive functional. 
	
	Moreover, if the equivalent assertions \ref{prop:characterization-simple-pole:pole}--\ref{prop:characterization-simple-pole:functional} hold, then the spectral projection corresponding to $\lambda_0$ is positive.
\end{proposition}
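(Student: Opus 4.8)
The plan is to reduce everything to two properties of the leading Laurent coefficient $U_{-m}$ of $\Res(\argument,A)$ at $\lambda_0$, where $m\in\bbN$ denotes the order of the pole: (1) $U_{-m}$ is a positive operator, and (2) if $m\geq 2$, then $U_{-m}$ annihilates $\ker(\lambda_0-A)$ and $U_{-m}'$ annihilates $\ker(\lambda_0-A')$. Property (1) is exactly what Remark~\ref{rem:sufficient-existence-vectors} yields from the standing hypothesis that $(\lambda-\lambda_0)\dist{S_n\Res(\lambda,A)T_n f}{E_+}\to 0$ as $\lambda\downarrow\lambda_0$ for all $n$ and all $f\in E_+$. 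Property (2) comes from the standard identity $U_{-m}=(A-\lambda_0)^{m-1}P$, where $P=U_{-1}$ is the spectral projection: since $\ker(\lambda_0-A)\subseteq\Ima P$ and $P$ acts as the identity on its range, $U_{-m}x=(A-\lambda_0)^{m-1}x=0$ for $x\in\ker(\lambda_0-A)$ once $m-1\geq 1$, and the corresponding statement for $U_{-m}'$ follows by duality (using $\Res(\argument,A')=\Res(\argument,A)'$ and that $\ker(\lambda_0-A')$ lies in the domain of every power of $A'$). As in the proof of Proposition~\ref{prop:sufficient-existence-vectors}, I would also record that $U_{-m}\neq 0$, that $\Ima U_{-m}\subseteq\ker(\lambda_0-A)$ and $\Ima U_{-m}'\subseteq\ker(\lambda_0-A')$, and that a positive nonzero operator has a positive nonzero vector in its range.

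For ``\ref{prop:characterization-simple-pole:vector} $\Rightarrow$ \ref{prop:characterization-simple-pole:pole}'' I would argue by contradiction: if $m\geq 2$, then since $U_{-m}\geq 0$ is nonzero, its range contains a positive nonzero vector $x=U_{-m}y$ (with $y\geq 0$), and $x\in\ker(\lambda_0-A)$; by \ref{prop:characterization-simple-pole:vector} there is a positive $\psi\in\ker(\lambda_0-A')$ with $\duality{\psi}{x}>0$, yet $\duality{\psi}{x}=\duality{\psi}{U_{-m}y}=\duality{U_{-m}'\psi}{y}=0$ by property (2) — a contradiction, so $m=1$. The implication ``\ref{prop:characterization-simple-pole:functional} $\Rightarrow$ \ref{prop:characterization-simple-pole:pole}'' is the mirror image, applied to the positive nonzero operator $U_{-m}'$ (whose range lies in $\ker(\lambda_0-A')$) and to a vector supplied by \ref{prop:characterization-simple-pole:functional}.

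For the reverse directions I would assume \ref{prop:characterization-simple-pole:pole}, so $m=1$ and hence $P=U_{-1}=U_{-m}$ is positive by property (1) — this already settles the final ``moreover'' assertion. Since $\lambda_0$ is a simple pole we have $\Ima P=\ker(\lambda_0-A)$, $\Ima P'=\ker(\lambda_0-A')$, and $P'\geq 0$. Given $0<x\in\ker(\lambda_0-A)$, I would pick a positive functional $\eta\in E'$ with $\duality{\eta}{x}>0$ (which exists because $x>0$) and set $\psi:=P'\eta$; then $\psi\geq 0$, $\psi\in\ker(\lambda_0-A')$, and $\duality{\psi}{x}=\duality{\eta}{Px}=\duality{\eta}{x}>0$, which is \ref{prop:characterization-simple-pole:vector}. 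Symmetrically, for $0<\psi\in\ker(\lambda_0-A')$ I would choose $0\leq v\in E$ with $\duality{\psi}{v}>0$ (possible since $\psi$ does not vanish on the generating cone $E_+$) and set $x:=Pv$ to obtain \ref{prop:characterization-simple-pole:functional}.

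Finally, the ``in particular'' clause I would obtain by checking one of the equivalent conditions outright: if $\ker(\lambda_0-A)$ contains a quasi-interior point $q$ of $E$, then $\duality{\psi}{q}>0$ for every positive nonzero $\psi\in E'$, so \ref{prop:characterization-simple-pole:functional} holds with $x=q$; if $\ker(\lambda_0-A')$ contains a strictly positive functional $\phi$, then $\duality{\phi}{x}>0$ for every $x>0$, so \ref{prop:characterization-simple-pole:vector} holds with $\psi=\phi$; either way \ref{prop:characterization-simple-pole:pole} follows from the equivalences. I expect the main obstacle to be property (2) — keeping the bookkeeping of the Laurent coefficients and their duals straight, including the minor domain subtlety for $(A')^{m-1}$ acting on $\ker(\lambda_0-A')$ — together with a clean invocation of Remark~\ref{rem:sufficient-existence-vectors} for the positivity of $U_{-m}$; once these are in place, every remaining step is a short chase through the bilinear pairing.
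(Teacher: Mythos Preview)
Your proposal is correct and follows essentially the same route as the paper's proof: the positivity of the leading Laurent coefficient $U_{-m}$ is obtained from the standing hypothesis (the paper goes through Proposition~\ref{prop:characterization-asymptotic-resolvent} and the strong convergence $S_n,T_n\to I$, you cite Remark~\ref{rem:sufficient-existence-vectors} directly, which amounts to the same thing), and the contradiction for \ref{prop:characterization-simple-pole:vector}~$\Rightarrow$~\ref{prop:characterization-simple-pole:pole} uses the identity $U_{-m}'\psi=0$ for $\psi\in\ker(\lambda_0-A')$ when $m\geq 2$ (the paper writes this as $U_{-m}'=U_{-m+1}'A'$ via one step of the recurrence, you iterate to $(A-\lambda_0)^{m-1}P$, but these are the same standard fact). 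The construction $\psi:=P'\eta$ for \ref{prop:characterization-simple-pole:pole}~$\Rightarrow$~\ref{prop:characterization-simple-pole:vector}, the ``in particular'' clause, and the ``moreover'' clause are handled identically.
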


\begin{proof}
	We begin by noting that due to~\cite[Theorem~II.6.3]{Schaefer1974}, each of the conditions -- ${\ker(\lambda_0-A)}$ contains a quasi-interior point and $\ker(\lambda_0-A')$ contains a strictly positive functional -- imply the assertions \ref{prop:characterization-simple-pole:functional} and \ref{prop:characterization-simple-pole:vector} respectively. Next, we prove the equivalence of \ref{prop:characterization-simple-pole:pole} and \ref{prop:characterization-simple-pole:vector}. The equivalence of \ref{prop:characterization-simple-pole:pole} and \ref{prop:characterization-simple-pole:functional} can be proved in a parallel way. As usual, we will assume $\lambda_0=0$. 
	
	\Implies{prop:characterization-simple-pole:pole}{prop:characterization-simple-pole:vector}  Let $x$ be a positive and non-zero vector in $\ker A$ and denote the spectral projection associated to $0$ by $P$. Then by Proposition~\ref{prop:characterization-asymptotic-resolvent}, we have $S_n P T_n \geq 0$ for all $n \in \bbN$. Since the sequences $(S_n)$ and $(T_n)$ converge strongly to the identity operator, so in fact $P \geq 0$. Now pick any positive functional $\phi$ on $E$ such that $\duality{\phi}{x} >0$. Because $0$ is a simple pole of the resolvent $\Res(\argument,A)$, therefore, $\Ima P= \ker A$ and $\Ima P'=\ker A'$. As a result, $\psi:= P'\phi \in \ker A'$ and $\psi \geq 0$. Hence
	\[
		\duality{\psi}{x}=\duality{P'\phi}{x}=\duality{\phi}{Px}=\duality{\phi}{x}>0.
	\]
	
	\Implies{prop:characterization-simple-pole:vector}{prop:characterization-simple-pole:pole} Let $m \geq 2$ be the order of $0$ as a pole of the resolvent $\Res(\argument,A)$ and $U_{-m} \in \calL(E)$ be the coefficient of $\lambda^{-m}$ in the Laurent series expansion of $\Res(\argument,A)$ about $0$. Then there exists a positive non-zero vector $v \in E$ such that $x:=U_{-m}v >0$ (see Remark~\ref{rem:sufficient-existence-vectors}). Since $\Ima U_{-m} \subseteq \ker A$, we get
	\[
		\duality{\psi}{x}=\duality{\psi}{U_{-m}v}=\duality{U_{-m}'\psi}{v}=\duality{U_{-m+1}'A'\psi}{v}=0
	\]
	for every $0<\psi\in \ker A'$ (where the penultimate equality is obtained using $m\geq 2$ and~\cite[Theorem~VIII.8.2]{Yosida1980}), which is a contradiction.
	
	Lastly, suppose the equivalent assertions \ref{prop:characterization-simple-pole:pole}--\ref{prop:characterization-simple-pole:functional} hold. Then the proof of the implication ``\ref{prop:characterization-simple-pole:pole} $\Rightarrow$ \ref{prop:characterization-simple-pole:vector}'' shows that the spectral projection corresponding to $\lambda_0$ is positive.
\end{proof}

Let $\lambda_0$ be a real spectral value of a closed, densely defined, and real operator $A$ on a complex Banach lattice such that $\lambda_0$ is also a pole of the resolvent $\Res(\argument,A)$. If $A$ satisfies a \emph{domination condition}, then the resolvent $R(\argument,A)$ is individually eventually strongly positive (equivalently, negative) with respect to a quasi-interior point at $\lambda_0$ if and only if the corresponding spectral projection is strongly positive with respect to the same quasi-interior point~\cite[Theorem~4.4]{DanersGlueckKennedy2016b}. In fact, it was shown in~\cite[Theorem~4.1]{DanersGlueck2017}, that the necessary condition remains true even if the domination assumption is dropped. We obtain a version of this in the setting of this section. Observe that, in both Sections~\ref{section:sufficient-conditions-individual} and \ref{section:sufficient-conditions-uniform}, together with domination conditions, we implicitly (owing to Proposition~\ref{prop:sufficient-projection}) also assumed that the spectral projection is locally strongly positive while proving sufficient conditions for local eventual positivity (or negativity) of the resolvent. In contrast,  we do not presuppose any domination condition in the following theorem.

\begin{theorem}
	\label{thm:consequence-resolvent-strongly-positive-projection}
	Let $A$ be a closed, densely defined, and real operator on a complex Banach lattice $E$ and let $\lambda_0$ be a real spectral value of $A$ and a simple pole of the resolvent $\Res(\argument,A)$. Further, let $(S_n)$ and $(T_n)$ be sequences of band projections on $E$ converging strongly to the identity operator and let $u$ be a quasi-interior of $E$.
	
	Assume for each $n\in \bbN$ and for every positive non-zero vector $f \in E$, there exist $\lambda_1>\lambda_0$ and $c>0$ such that $ (\lambda_0,\lambda_1]\subseteq \rho(A)$  and ${(\lambda-\lambda_0) S_n \Res(\lambda,A) T_n f \geq c S_n u}$ for all $\lambda \in (\lambda_0,\lambda_1]$. Then the spectral projection associated to $\lambda_0$ is strongly positive.
\end{theorem}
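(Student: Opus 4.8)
The plan is to first normalize $\lambda_0=0$ and record the usual facts for a simple pole: $\Ima P=\ker A$, $\Ima P'=\ker A'$, and $(\lambda-\lambda_0)\Res(\lambda,A)\to P$ (strongly, in fact in operator norm) as $\lambda\downarrow\lambda_0$. Fixing $n\in\bbN$ and $0<f\in E$ and letting $\lambda\downarrow0$ in the assumed inequality $(\lambda-\lambda_0)S_n\Res(\lambda,A)T_nf\ge c\,S_nu$, closedness of the positive cone yields
\[
	S_nPT_nf\ge c\,S_nu\ge 0 .
\]
In particular $S_nPT_n\ge 0$ for every $n$; since $S_n,T_n\to I$ strongly and $\norm{S_n},\norm{T_n}\le 1$ one has $S_nPT_ng\to Pg$ for every $g$, so letting $n\to\infty$ gives $P\ge 0$ (alternatively this is immediate from Proposition~\ref{prop:characterization-asymptotic-resolvent}).

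The heart of the argument is then to upgrade $P\ge 0$ to strong positivity, i.e.\ to show that $Pf$ is a quasi-interior point of $E$ for every $0<f\in E$. The key point is that, although the constant $c=c(n,f)$ above may degenerate as $n\to\infty$, one should not use the inequality $S_nPT_nf\ge c\,S_nu$ quantitatively. Indeed, for $n$ large enough that $S_n\ne 0$ the band projection $S_n$ is a surjective lattice homomorphism onto the Banach lattice $\Ima S_n$, so $S_nu$ is a quasi-interior point of $\Ima S_n$; hence $S_nPT_nf$, dominating a positive multiple of $S_nu$, is itself a quasi-interior point of $\Ima S_n$, which means $\overline{E_{S_nPT_nf}}=\Ima S_n$. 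On the other hand, from $0\le T_nf\le f$, $P\ge 0$ and $S_n\le I$ we obtain the crucial domination
\[
	0\le S_nPT_nf\le PT_nf\le Pf ,
\]
so $S_nPT_nf\in E_{Pf}$ and therefore $\Ima S_n=\overline{E_{S_nPT_nf}}\subseteq\overline{E_{Pf}}$ for all large $n$. Since $\bigcup_n\Ima S_n$ is dense in $E$ (because $S_n\to I$ strongly), we conclude $\overline{E_{Pf}}=E$, i.e.\ $Pf$ is a quasi-interior point of $E$ (in particular $Pf\ne0$). As $0<f$ was arbitrary, $P$ is strongly positive.

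I expect the main obstacle to be exactly this upgrade step: the natural but misguided attempts are to try to prove $Pf\gg_u 0$ (generally false, e.g.\ when $Pf$ vanishes at the boundary in an $L^p$-example) or to extract an $n$-uniform lower bound for $c(n,f)$ (which need not exist). The resolution is the domination $S_nPT_nf\le Pf$ — available precisely because $S_n$ and $T_n$ are dominated by the identity — combined with the fact that $S_nu$ stays quasi-interior inside the band $\Ima S_n$ and that the bands $\Ima S_n$ exhaust $E$; this lets one transfer quasi-interiority to $Pf$ band by band, with no control on rates. It is worth noting that, in contrast with the earlier results of the section, Proposition~\ref{prop:sufficient-existence-vectors} is not needed here, and that band-ness of $S_n$ is used only through the lattice-homomorphism property and the bound $S_n\le I$.
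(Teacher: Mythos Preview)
Your proof is correct, and it takes a genuinely different route from the paper's. The paper does not show directly that $Pf$ is a quasi-interior point for every $f>0$. Instead it invokes the characterization of strongly positive spectral projections from~\cite[Proposition~3.1]{DanersGlueckKennedy2016b}: it picks a positive eigenvector $x\in\ker A$ and a positive eigenfunctional $\psi\in\ker A'$, and then proves separately (i) that $x$ is a quasi-interior point, (ii) that $\psi$ is strictly positive, and (iii) that $\dim\ker A=1$. Step~(i) is obtained via the decomposition $S_nx=\lambda S_n\Res(\lambda,A)T_nx+S_n\big(\lambda\Res(\lambda,A)(x-T_nx)\big)$ together with $(\lambda\Res(\lambda,A)g)^-\to 0$ for $g\ge 0$, yielding $S_nx\ge c_1S_nu$ and then Lemma~\ref{lem:consequence-quasi-dimension}\ref{lem:consequence-quasi-dimension:quasi}; step~(ii) requires a parallel dual computation; step~(iii) uses Lemma~\ref{lem:consequence-quasi-dimension}\ref{lem:consequence-quasi-dimension:dimension}.

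Your argument bypasses all of this. You pass to the limit $\lambda\downarrow 0$ once to get $S_nPT_nf\ge cS_nu$, and then the single inequality $0\le S_nPT_nf\le Pf$ (which, as you correctly note, hinges on $0\le S_n,T_n\le I$ and $P\ge 0$) lets you embed each band $\Ima S_n$ into $\overline{E_{Pf}}$; density of $\bigcup_n\Ima S_n$ finishes the job. This is shorter and more self-contained: it needs neither the external characterization from~\cite{DanersGlueckKennedy2016b} nor a separate treatment of the dual eigenspace. What the paper's approach buys in return is the explicit structural information (one-dimensionality of $\ker(\lambda_0-A)$, existence of a quasi-interior eigenvector and a strictly positive eigenfunctional) as a direct byproduct rather than a corollary of the conclusion.
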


Notice that the constant $c>0$ in our hypothesis is dependent on $n\in \bbN$ and $f> 0$ but is independent of $\lambda\in (\lambda_0,\lambda_1]$. This requirement, although relatively strong, allows us to obtain worthwhile results. As a matter of fact, this prerequisite is in line with the conclusion of the results in Sections~\ref{section:sufficient-conditions-individual} and \ref{section:sufficient-conditions-uniform}. Later on, we will discuss this condition in a little more detail, and eventually drop it in Theorems~\ref{thm:consequence-resolvent-spectral}, \ref{thm:consequence-semigroup-spectral}, and \ref{thm:consequence-uniform-convergence}.

Owing to~\cite[Proposition~3.1]{DanersGlueckKennedy2016b}, the strong positivity of the spectral projection is equivalent to:~the eigenspace $\ker(\lambda_0-A')$ contains a strictly positive functional and the eigenspace $\ker(\lambda_0-A)$ is spanned by a quasi-interior point. We will use this characterization to prove Theorem~\ref{thm:consequence-resolvent-strongly-positive-projection}. First, we give a sufficient condition for $\ker(\lambda_0-A)$ to be spanned by a quasi-interior point.

\begin{lemma}
	\label{lem:consequence-quasi-dimension}
	Let $E$ be a complex Banach lattice.
	\begin{enumerate}[ref=(\roman*)]
	\item\label{lem:consequence-quasi-dimension:projection} If $P$ is a positive projection on $E$, then $P$ maps quasi-interior points of $E$ to quasi-interior points of $\Ima P$. 
	
	\item\label{lem:consequence-quasi-dimension:quasi} Let $(S_n)$ be a sequence of band projections on $E$ converging strongly to the identity operator. If $x$ is a positive and non-zero vector in $E$, then it is a quasi-interior point of $E$ if and only if $S_n x$ is a quasi-interior point of $\Ima S_n$ for every $n \in \bbN$.
	
	\item\label{lem:consequence-quasi-dimension:dimension} Let $\lambda_0$ be a real spectral value of a closed, densely defined, and real operator $A$ on $E$. Moreover, let $\lambda_0$ be a simple pole of the resolvent $\Res(\argument,A)$ such that the associated spectral projection is positive. If every positive non-zero vector in the eigenspace $\ker(\lambda_0-A)$ is a quasi-interior point of $E$, then the dimension of the eigenspace $\ker(\lambda_0-A)$ is equal to one.
	\end{enumerate}
\end{lemma}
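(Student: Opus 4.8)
The three parts are largely independent and I would prove them in order, as (iii) uses (i) and (ii). For part~\ref{lem:consequence-quasi-dimension:projection}, recall that $x$ is a quasi-interior point of $E$ iff the principal ideal $E_x$ is dense in $E$, or equivalently iff $\langle \phi, x\rangle > 0$ for every $0 < \phi \in E'$. I would use the functional characterization: let $\psi$ be a positive non-zero functional on $\Ima P$; extend it (or rather pull it back) via $P'$ to the positive functional $P'\psi$ on $E$. Since $P$ is a projection, $P'$ is a projection on $E'$ with $\Ima P' $ naturally identified with $(\Ima P)'$, and $P'\psi \ge 0$; moreover $P'\psi \ne 0$ because $\psi = (P'\psi)\restrict{\Ima P}$ is non-zero. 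If $x$ is a quasi-interior point of $E$ then $\langle P'\psi, x\rangle > 0$, i.e.\ $\langle \psi, Px\rangle > 0$. As $\psi$ was an arbitrary positive non-zero functional on $\Ima P$, this shows $Px$ is a quasi-interior point of $\Ima P$.

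For part~\ref{lem:consequence-quasi-dimension:quasi}, the forward implication is immediate from \ref{lem:consequence-quasi-dimension:projection} applied to $P = S_n$ (a band projection is in particular a positive projection). For the converse, suppose $S_n x$ is a quasi-interior point of $\Ima S_n$ for every $n$, and let $0 < \phi \in E'$; I must show $\langle \phi, x\rangle > 0$. The idea is that since $S_n \to I$ strongly, $S_n' \to I$ in the weak$^*$ sense on $E'$ in a way strong enough to detect positivity: more concretely, $\langle \phi, x \rangle = \lim_n \langle \phi, S_n x\rangle = \lim_n \langle S_n' \phi, x\rangle$, so it suffices to find one $n$ with $\langle S_n'\phi, x\rangle > 0$. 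Now $S_n x > 0$ in $\Ima S_n$ for large $n$ (indeed $S_n x \to x \ne 0$, so $S_n x \ne 0$ eventually), and $S_n'\phi = \phi\restrict{\Ima S_n}$ pulled back is a positive functional on $\Ima S_n$; it is non-zero on $\Ima S_n$ because otherwise $\langle \phi, S_m x\rangle = \langle S_n'\phi, S_m x\rangle$... — here I need to be slightly careful. The cleanest route: band projections commute and $S_n x \le x$ with $S_n x \uparrow$ in an appropriate sense; since $S_n x$ is quasi-interior in $\Ima S_n$ and $\phi\restrict{\Ima S_n}$ is positive, if $\phi\restrict{\Ima S_n} \ne 0$ then $\langle \phi, S_n x\rangle > 0$ and we are done. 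So the only thing to rule out is $\phi\restrict{\Ima S_n} = 0$ for all $n$; but $\bigcup_n \Ima S_n$ is dense (as $S_n \to I$ strongly) and $\phi \ne 0$, so $\phi$ cannot vanish on every $\Ima S_n$. Pick such an $n$ and conclude $\langle \phi, x\rangle \ge \langle \phi, S_n x\rangle > 0$ — wait, $\langle\phi, x\rangle = \langle\phi, S_n x\rangle + \langle \phi, (I-S_n)x\rangle$ and both terms are $\ge 0$ since $I - S_n \ge 0$ and $x \ge 0$; so indeed $\langle\phi, x\rangle \ge \langle\phi, S_n x\rangle > 0$.

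For part~\ref{lem:consequence-quasi-dimension:dimension}, let $P$ be the spectral projection associated to $\lambda_0$; since $\lambda_0$ is a simple pole, $\Ima P = \ker(\lambda_0 - A)$, and by hypothesis $P \ge 0$. Suppose for contradiction $\dim\ker(\lambda_0-A) \ge 2$. Pick two linearly independent vectors in $\ker(\lambda_0 - A)$; using that the positive cone is generating one can arrange a positive non-zero vector $x$ and another vector $y$ not a scalar multiple of $x$. The standard trick (cf.\ the proof that a strongly positive eigenvalue is geometrically simple) is to take $y$ real and consider $x - ty$ for real $t$: for small $t > 0$ this is still a positive non-zero vector in $\ker(\lambda_0-A)$, hence by hypothesis a quasi-interior point; but as $t$ increases there is a critical value $t_0$ at which $x - t_0 y \ge 0$ but $x - t_0 y$ fails to be quasi-interior — indeed $x - t_0 y$ lies on the boundary of the cone, so $(x-t_0 y)^- $ appears or more precisely $x - t_0 y$ has nontrivial "null ideal", contradicting that it is a positive non-zero eigenvector and hence (by hypothesis) quasi-interior, unless $x - t_0 y = 0$, which would make $x,y$ dependent. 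Making the "critical value" argument precise is the main obstacle: I would set $t_0 := \sup\{t \ge 0 : x - ty \ge 0\}$, note $0 < t_0 < \infty$ (finite since $y \ne 0$ forces $x - ty$ to leave the cone; positive since $x$ is quasi-interior hence strictly dominates a multiple of... — here I use that $y \in E_x$ is not automatic, so instead I argue $t_0 > 0$ because $x - ty \to x$ as $t \to 0$ and the cone has nonempty interior-like behaviour near quasi-interior points; cleanly, if $x$ is quasi-interior there is $0 < \phi$ with... actually the simplest: if $t_0 = 0$ then $x - ty \not\ge 0$ for all $t > 0$; but also applying the same to $-y$ gives $x + ty \not\ge 0$ for all $t>0$, and one of these must fail since $\pm y$ real). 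Then $z := x - t_0 y \ge 0$, $z \ne 0$ (else $y = x/t_0$, contradicting independence), so $z$ is quasi-interior by hypothesis; but by maximality of $t_0$, $z - \epsilon y \not\ge 0$ and $z + \epsilon y \not\ge 0$ for all $\epsilon > 0$ — i.e.\ $z$ cannot absorb $\pm y$, so $y \notin E_z$, whereas a quasi-interior point $z$ together with $z$ being a weak order unit gives... hmm. The genuinely clean finish: a quasi-interior point $z$ and the fact that $z$ dominates no multiple of $y$ need not be contradictory in general Banach lattices. So instead I expect the intended argument uses \ref{lem:consequence-quasi-dimension:projection} and \ref{lem:consequence-quasi-dimension:quasi} differently: map everything through the $S_n$ and work in $\Ima S_n$, or invoke that $\Ima P$ is finite-dimensional and a finite-dimensional Banach lattice in which every positive non-zero vector is quasi-interior (= order unit, in finite dimensions) must be one-dimensional — because $\bbR^k$ with $k \ge 2$ has positive non-zero vectors with zero coordinates, which are not order units. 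That is the cleanest route: $\Ima P = \ker(\lambda_0-A)$ is a finite-dimensional sublattice (it is a sublattice since $P \ge 0$ is a projection, hence $\Ima P$ with the induced order is a Banach lattice), and in a finite-dimensional Banach lattice $\cong \bbR^k$ (or $\bbC^k$), "every positive non-zero vector is a quasi-interior point" forces $k = 1$. The main obstacle is verifying that $\Ima P$ is indeed a sublattice / Banach lattice in its own right from positivity of $P$ alone, and then quoting the classification of finite-dimensional Banach lattices; both are standard but need a careful citation (e.g.\ to~\cite{Schaefer1974} or~\cite{Meyer-Nieberg1991}).
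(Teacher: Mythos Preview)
Your arguments for parts~\ref{lem:consequence-quasi-dimension:projection} and~\ref{lem:consequence-quasi-dimension:quasi} are correct and essentially match the paper's proof: the functional characterization of quasi-interior points via \cite[Theorem~II.6.3]{Schaefer1974}, the weak$^*$ convergence $S_n'\phi \to \phi$ to find an $n$ with $S_n'\phi \ne 0$ on $\Ima S_n$, and the inequality $\duality{\phi}{x} \ge \duality{S_n'\phi}{S_n x} > 0$ using that $S_n$ and $S_n'$ are band projections --- all of this is exactly how the paper proceeds.

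For part~\ref{lem:consequence-quasi-dimension:dimension}, your eventual strategy is the right one, but there is a genuine gap: you assume $\Ima P = \ker(\lambda_0-A)$ is finite-dimensional, and this does \emph{not} follow from the hypotheses. A simple pole only means the Laurent expansion of the resolvent has order~$1$ at $\lambda_0$; the eigenspace can still be infinite-dimensional (for a trivial example, take $A=0$ on any Banach lattice $E$: then $\lambda_0=0$ is a simple pole and $\ker A = E$). So invoking the classification of finite-dimensional Banach lattices as $\bbC^k$ is not available. The paper closes this gap by citing a lemma of Lotz \cite[Lemma~5.1]{Lotz1968} (see also \cite[Remark~5.9]{Glueck2018}): in \emph{any} Banach lattice in which every positive non-zero element is a quasi-interior point, the dimension is one. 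This is the key ingredient you are missing.

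Two smaller points on~\ref{lem:consequence-quasi-dimension:dimension}. First, $\Ima P$ need not be a \emph{sublattice} of $E$; what is true (and what the paper cites from \cite[Proposition~III.11.5]{Schaefer1974}) is that the range of a positive projection is a Banach lattice in its own right under an equivalent norm, with cone $\Ima P \cap E_+$. Second, the hypothesis gives you that each $0 < x \in \ker(\lambda_0-A)$ is a quasi-interior point \emph{of $E$}; to apply Lotz's lemma you need it to be a quasi-interior point \emph{of $\Ima P$}. This step is immediate from part~\ref{lem:consequence-quasi-dimension:projection}: since $x = Px$ and $P$ maps quasi-interior points of $E$ to quasi-interior points of $\Ima P$, you are done. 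The paper makes this reduction explicit, and you should too.
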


\begin{proof}
	\ref{lem:consequence-quasi-dimension:projection} Let $x$ be a quasi-interior point of $E$ and $0<\phi\in (\Ima P)'$. Then $P'\phi$ is a positive non-zero functional on $E$, which implies that $\duality{P'\phi}{x}>0$. Thus $\duality{\phi}{Px}>0$. Now the claim readily follows after appealing to~\cite[Theorem~II.6.3]{Schaefer1974}.
	
	\ref{lem:consequence-quasi-dimension:quasi} Firstly, fix $n\in\bbN$. Since $S_n$ is a positive projection, so $S_n$ maps quasi-interior points of $E$ to quasi-interior points of $\Ima S_n$  by~\ref{lem:consequence-quasi-dimension:projection}. Conversely, let $x\in E$ be positive and non-zero such that $S_n x$ is a quasi-interior point of $\Ima S_n$ for every $n \in \bbN$. To verify that $x$ is a quasi-interior of $E$, it is enough to show that $\duality{\phi}{x}>0$ for every $0<\phi \in E'$ due to~\cite[Theorem~II.6.3]{Schaefer1974}. To this end, let $\phi$ be a positive non-zero functional on $E$. Since $(S_n)$ converges strongly to the identity operator on $E$, thus $(S_n'\phi)$ converges to $\phi$ in the weak$^*$-topology. Hence, we can find $n\in \bbN$ such that $S_n'\phi \neq 0$. As a result, there exists $0<f\in E$ such that $\duality{S_n'\phi}{f}\neq 0$. Since $S_n$ is a projection, we obtain $\duality{S_n'\phi}{S_nf}\neq 0$. For this reason, $(S_n'\phi)\restrict{\Ima S_n}$ is a positive and non-zero functional on $\Ima S_n$. This, together with the fact that $S_n x$ is a quasi-interior point of $\Ima S_n$, allows us to again employ~\cite[Theorem~II.6.3]{Schaefer1974} to get $\duality{S_n'\phi}{S_n x} >0$. But $S_n$ and $S_n'$ are band projections, which in turn means $S_n x\leq x$ and $S_n'\phi\leq \phi$, so actually $\duality{\phi}{x}>0$.
		
	\ref{lem:consequence-quasi-dimension:dimension} Let $P$ denote the spectral projection associated to $\lambda_0$. Since $P$ is a positive projection, we first have that $\Ima P$ is itself a Banach lattice with respect to an equivalent norm~\cite[Proposition~III.11.5]{Schaefer1974} and secondly that, $P$ maps quasi-interior points of $E$ to quasi-interior points of $\Ima P$ by~\ref{lem:consequence-quasi-dimension:projection}. Because $\lambda_0$ is a simple pole of the resolvent we also have that ${\Ima P=\ker(\lambda_0-A)}$.  Consequently, if every positive non-zero vector in the eigenspace $\ker(\lambda_0-A)$ is a quasi-interior point of $E$, then it must also be a quasi-interior point of $\ker(\lambda_0-A)$. Therefore, the dimension of $\ker(\lambda_0-A)$ must be one by~\cite[Lemma~5.1]{Lotz1968}; alternatively see~\cite[Remark~5.9]{Glueck2018} for an English translation of the result.
\end{proof}

We are now in a position to illustrate why it is not sufficient to consider sequences of positive operators instead of band projections -- in consonance with the earlier sections. Consider the function $u(x)=1-x^2$ on $[-1,1]$ and for each $n\in \bbN$, define $S_nf = \one_{[-1+1/n,1-1/n]} f$ for all $f\in C[-1,1]$. Then $ u\in C[-1,1]$ and $(S_n)$ is a sequence of positive operators converging strongly to the identity operator. Moreover, $S_n u$ is a quasi-interior point of $\Ima S_n$ for each $n$, but $u$ is not a quasi-interior point of $C[-1,1]$ (as it vanishes on the boundary). On the other hand, because of Lemma~\ref{lem:consequence-quasi-dimension}\ref{lem:consequence-quasi-dimension:quasi}, if $S_n$ is a sequence of band projections on a complex Banach lattice converging strongly to identity, then a positive non-zero vector $u$ is a quasi-interior point of $E$ if and only if $S_n u$ is a quasi-interior point of $\Ima S_n$ for each $n\in\bbN$. 

\begin{proof}[Proof of Theorem~\ref{thm:consequence-resolvent-strongly-positive-projection}]
	Without loss of generality, suppose $\lambda_0=0$. Since $0$ is a simple pole of the resolvent $\Res(\argument, A)$, the corresponding spectral projection $P$ is positive (Proposition~\ref{prop:characterization-simple-pole}). In fact, we can also infer that $\lim_{\lambda\downarrow 0}{\lambda \Res(\lambda,A)= P}$ strongly, ${\Ima P=\ker A}$, and ${\Ima P'=\ker A'}$. Using the positivity of $P$ with the above convergence, we get
	\begin{equation}\label{eq:negative-resolvent}
		\lim_{\lambda \downarrow 0} \left(\lambda\Res(\lambda,A)f\right)^-  = (Pf)^- = 0  \, \, \text{for all } f \geq 0.
	\end{equation}
	Now using the positivity of $P$ (and $P'$) with the latter two equalities (of the corresponding range) gives the existence of a vector $0<x\in\ker A$ and a functional ${0<\psi<\ker A'}$. In order to prove our assertion, it is sufficient to show that $\ker A$ is one-dimensional, $x$ is a quasi-interior point of $E$, and the functional $\psi$ is strictly positive; see~\cite[Proposition~3.1]{DanersGlueckKennedy2016b}. 
	
	To this end, fix $n \in \bbN$. Then there exist $\lambda_1,c_1>0$ such that $(0,\lambda_1]\subseteq \rho(A)$ and $\lambda S_n\Res(\lambda,A)T_n x\geq c_1 S_n u$ for all $\lambda \in (0,\lambda_1]$. In addition, we also have that $\lambda\Res(\lambda,A)x=x$ for every $\lambda\in \rho(A)$. Combining these we obtain
	\begin{align*}
		S_n x &=\lambda S_n \Res(\lambda,A) T_n x + S_n \big(\lambda \Res(\lambda,A) (x- T_n x)\big)\\
			&\geq \lambda  S_n \Res(\lambda,A) T_n x - S_n \big(\lambda \Res(\lambda,A) (x- T_n x)\big)^-\\
			& \geq c_1 S_n u -S_n \big(\lambda \Res(\lambda,A) (x- T_n x)\big)^-
	\end{align*}
	for all $\lambda \in (0,\lambda_1]$. Note that as $T_n$ is a band projection, we have ${x- T_n x\geq 0}$. Therefore we can apply \eqref{eq:negative-resolvent} to the above inequality, and doing so gives the inequality ${S_n x \geq c_1 S_n u}$ (note that this can be done since $c_1$ does not depend on $\lambda$). As $u$ is a quasi-interior point of $E$ and $S_n$ is a positive projection, so the vector $S_n u$ is a quasi-interior point of $\Ima S_n$ (Lemma~\ref{lem:consequence-quasi-dimension}\ref{lem:consequence-quasi-dimension:projection}), and in turn, $S_n x$ is a quasi-interior point of $\Ima S_n$. This allows us to take advantage of Lemma~\ref{lem:consequence-quasi-dimension}\ref{lem:consequence-quasi-dimension:quasi} to conclude that $x$ is a quasi-interior point of $E$. In fact, we have shown that every positive non-zero element of $\ker A$ is a quasi-interior point of $E$. An application of Lemma~\ref{lem:consequence-quasi-dimension}\ref{lem:consequence-quasi-dimension:dimension} now implies that $\ker A$ is one-dimensional.
	
	Finally, it only remains to show that $\psi$ is a strictly positive functional. In order to see this, first observe that because $\psi$ is a positive non-zero functional and $S_n u$ is a quasi-interior point of $\Ima S_n$ for all $n\in \bbN$, we can argue as in the proof of Lemma~\ref{lem:consequence-quasi-dimension}\ref{lem:consequence-quasi-dimension:quasi}  to show that  $\duality{\psi}{S_n u}=\duality{S_n'\psi}{S_n u} >0$ for sufficiently large $n \in\bbN$. Choose one such $n \in \bbN$ and let $0<f\in E$. By our assumptions, there exist $\lambda_2, c_2>0$ such that $(0,\lambda_2]\subseteq \rho(A)$ and ${\lambda S_n \Res(\lambda,A)T_nf \geq c_2 S_n u}$ for every $\lambda \in (0,\lambda_2]$. In addition, $\lambda \Res(\lambda,A)'\psi=\psi$ for all $\lambda\in \rho(A)$. These together with the fact that $S_n'$ is a band projection (in particular, $\psi-S_n'\psi\geq 0$) yields
	\begin{align*}
		\duality{T_n'\psi}{f} & = \duality{\lambda T_n' \Res(\lambda,A)' S_n' \psi}{f}  + \duality{T_n' \lambda \Res(\lambda,A)' (\psi - S_n'\psi)}{f}\\
					& =\lambda\duality{\psi}{S_n \Res(\lambda,A) T_n f} + \duality{\psi-S_n'\psi}{\lambda\Res(\lambda,A)T_n f}\\
					&\geq c_2 \duality{\psi}{S_n u} - \duality{\psi-S_n'\psi}{\left(\lambda \Res(\lambda,A)T_n f\right)^{-}}
	\end{align*}
	for all $\lambda \in (0,\lambda_2]$. Furthermore, because $T_n$ is a band projection, we have ${T_n f \geq 0}$. Whence, in view of $\psi-S_n'\psi\geq 0$, the previous inequality, and \eqref{eq:negative-resolvent} we have that ${\duality{T_n'\psi}{f} \geq c_2\duality{\psi}{S_n u}>0}$.  Lastly, since $T_n'$ is also a band projection, so ${T_n'\psi\leq \psi}$ from which it follows that $\duality{\psi}{f} >0$, proving that $\psi$ is a strictly positive functional.
\end{proof}

By a change of signs, we have the following consequence of local eventual negativity of the resolvent:

\begin{corollary}
	\label{cor:consequence-resolvent-strongly-positive-projection}
	Let $A$ be a closed, densely defined, and real operator on a complex Banach lattice $E$ and $\lambda_0$ be a real spectral value of $A$ and a simple pole of the resolvent $\Res(\argument,A)$. Further, let $(S_n)$ and $(T_n)$ be sequences of band projections on $E$ converging strongly to the identity operator and let $u$ be a quasi-interior point of $E$.
	
	Assume for each $n\in \bbN$ and for each positive non-zero vector $f \in E$, there exist $\lambda_1<\lambda_0$ and $c>0$ so that $[\lambda_1,\lambda_0)\subseteq \rho(A)$ and ${(\lambda-\lambda_0) S_n \Res(\lambda,A) T_n f \geq c S_n u}$ for all $\lambda \in [\lambda_1,\lambda_0)$. Then the spectral projection associated to $\lambda_0$ is strongly positive.
\end{corollary}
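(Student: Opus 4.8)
The plan is to deduce the statement from Theorem~\ref{thm:consequence-resolvent-strongly-positive-projection} applied to the operator $-A$. First I would record the elementary spectral bookkeeping: if $A$ is closed, densely defined, and real, then so is $-A$; since $\rho(-A)=-\rho(A)$ and $\Res(\mu,-A)=-\Res(-\mu,A)$ for every $\mu\in\rho(-A)$, the point $\nu_0:=-\lambda_0$ is a spectral value of $-A$ and is a simple pole of $\Res(\argument,-A)$ precisely because $\lambda_0$ is a simple pole of $\Res(\argument,A)$. Moreover, the spectral projection of $-A$ associated to $\nu_0$ coincides with the spectral projection $P$ of $A$ associated to $\lambda_0$ (a standard fact, already used in the proof of Theorem~\ref{thm:sufficient-individual-resolvent}). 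The band projections $(S_n)$ and $(T_n)$ and the quasi-interior point $u$ carry over unchanged.

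Next I would translate the hypothesis via the substitution $\mu=-\lambda$. Fix $n\in\bbN$ and a positive non-zero $f\in E$, and choose $\lambda_1<\lambda_0$ and $c>0$ with $[\lambda_1,\lambda_0)\subseteq\rho(A)$ and $(\lambda-\lambda_0)S_n\Res(\lambda,A)T_nf\geq cS_nu$ for all $\lambda\in[\lambda_1,\lambda_0)$. As $\lambda$ runs through $[\lambda_1,\lambda_0)$, the value $\mu=-\lambda$ runs through $(\nu_0,-\lambda_1]\subseteq\rho(-A)$, and one has $\mu-\nu_0=\lambda_0-\lambda>0$ and $\Res(\mu,-A)=-\Res(\lambda,A)$. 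Hence
\[
	(\mu-\nu_0)\,S_n\Res(\mu,-A)T_nf=(\lambda_0-\lambda)\bigl(-S_n\Res(\lambda,A)T_nf\bigr)=(\lambda-\lambda_0)\,S_n\Res(\lambda,A)T_nf\geq cS_nu
\]
for all $\mu\in(\nu_0,-\lambda_1]$. Thus $-A$, the spectral value $\nu_0$, the sequences $(S_n)$, $(T_n)$, and the quasi-interior point $u$ satisfy exactly the hypotheses of Theorem~\ref{thm:consequence-resolvent-strongly-positive-projection} (the constant is again independent of $\mu$, and the ``$\lambda_1$'' of that theorem is here $-\lambda_1>\nu_0$).

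Finally I would invoke Theorem~\ref{thm:consequence-resolvent-strongly-positive-projection} to conclude that the spectral projection of $-A$ associated to $\nu_0$ is strongly positive; by the identification from the first step this projection is $P$, and therefore $P$ is strongly positive, as claimed. I do not anticipate any genuine obstacle: the only points needing care are checking that $\mu-\nu_0>0$ (so that $\mu$ really lies in a right neighbourhood of $\nu_0$, matching the orientation built into the hypothesis of Theorem~\ref{thm:consequence-resolvent-strongly-positive-projection}) and the agreement of the two spectral projections under $A\mapsto-A$, both of which are routine.
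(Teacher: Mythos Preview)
Your proposal is correct and matches the paper's approach exactly: the paper simply states that the corollary follows ``by a change of signs'' from Theorem~\ref{thm:consequence-resolvent-strongly-positive-projection}, which is precisely the reduction to $-A$ that you carry out in detail. Your bookkeeping (the resolvent identity $\Res(\mu,-A)=-\Res(-\mu,A)$, the interval correspondence, and the coincidence of the two spectral projections) is accurate.
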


We now turn our attention to implications of local eventual positivity of the semigroup. Under certain spectral and smoothing assumptions, individual eventual positivity of the semigroup is equivalent to strong convergence of the semigroup operators to a strongly positive projection~\cite[Theorem~5.2]{DanersGlueckKennedy2016b}. Furthermore, it was shown in~\cite[Theorem~8.3]{DanersGlueckKennedy2016b} that (under technical assumptions) if the semigroup is \emph{individually asymptotically positive}, then the spectral bound of the generator is a dominant spectral value and a simple pole of the resolvent. Moreover, the operators $e^{t(A-\spb(A))}$ converge strongly to a positive projection as $t\to\infty$. We show that these consequences hold in our setting as well. Our arguments closely follow the proof of~\cite[Theorem~8.3]{DanersGlueckKennedy2016b}.

\begin{theorem}
	\label{thm:consequence-strong-convergence}
	Let $(e^{tA})_{t\geq 0}$ be a real $C_0$-semigroup on a complex Banach lattice $E$ such that the spectral bound $\spb(A)>-\infty$. Assume that the peripheral spectrum $\perSpec(A)$ is non-empty, finite, and consists only of poles of the resolvent $\Res(\argument,A)$ and that the rescaled semigroup $(e^{t(A-\spb(A))})_{t\geq 0}$ is bounded.
	
	Let $(S_n)$ and $(T_n)$ be sequences of band projections on $E$ converging strongly to the identity operator such that for every $f \geq 0$ and $n\in \bbN$, we have
	\[
		\dist{S_n e^{t(A-\spb(A))}T_n f}{E_+}\to 0 \text{ as } t\to \infty.
	\]
	
	Then the spectral bound $\spb(A)$ is a dominant spectral value of $A$, a simple pole of the resolvent $\Res(\argument,A)$, and the rescaled semigroup $(e^{t(A-\spb(A))})_{t\geq 0}$ converges strongly to the spectral projection $P$ associated to $\spb(A)$. Furthermore, $P$ is positive.
\end{theorem}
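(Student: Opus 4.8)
The plan is to follow the scheme of \cite[Theorem~8.3]{DanersGlueckKennedy2016b}, adapted to the present local setting, so I first reduce and isolate the ``global'' content. Normalise $\spb(A)=0$, so $(e^{tA})_{t\ge0}$ is bounded. Since $\perSpec(A)=\{i\beta_1,\dots,i\beta_m\}$ is a finite set of poles of $\Res(\argument,A)$ and the semigroup is bounded, each $i\beta_j$ is a \emph{first-order} pole (a pole of order $\ge 2$ on the imaginary axis would force $\|e^{tA}\|$ to grow at least like $t$), and the associated bounded spectral projection $\perP$ (the sum of the finitely many spectral projections at the $i\beta_j$) satisfies $e^{tA}\perP=\sum_{j=1}^{m}e^{i\beta_j t}P_j$; this extends to a $C_0$-group in $t\in\bbR$. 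The decomposition $E=\Ima\perP\oplus\ker\perP$ reduces the semigroup, $A\restrict{\ker\perP}$ generates a bounded $C_0$-semigroup with spectrum $\spec(A)\setminus\perSpec(A)$, and since $\spec(A)\cap i\bbR=\perSpec(A)$ this spectrum misses the imaginary axis; the Arendt--Batty--Lyubich--V\~u theorem \cite[Theorem~2.4]{ArendtBatty88} (applicable since the boundary spectrum is empty) then gives $e^{tA}(I-\perP)\to 0$ strongly. Hence everything follows once $\perSpec(A)=\{0\}$ is shown.

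The first substantive step transfers positivity to the peripheral part. Fix $n\in\bbN$ and $0<f\in E$, and write $S_ne^{tA}T_nf = S_ne^{tA}\perP T_nf + S_n\big(e^{tA}(I-\perP)\big)T_nf$. By the strong stability above and boundedness of $S_n$ the second summand tends to $0$ in norm, so the hypothesis $\dist{S_ne^{tA}T_nf}{E_+}\to 0$ forces $\dist{S_ne^{tA}\perP T_nf}{E_+}\to 0$. But $t\mapsto S_ne^{tA}\perP T_nf=\sum_j e^{i\beta_j t}\,(S_nP_jT_nf)$ is an $E$-valued trigonometric polynomial, hence almost periodic, so its value at any $t_0$ is a cluster point of $\{S_ne^{tA}\perP T_nf : t\ge N\}$ for every $N$; since $E_+$ is closed, $S_ne^{t_0A}\perP T_nf\ge 0$ for all $t_0\in\bbR$. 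Letting $n\to\infty$ (using $S_n,T_n\to I$ strongly and closedness of $E_+$) yields $e^{t_0A}\perP f\ge 0$ for all $t_0\in\bbR$ and all $f\ge 0$. Therefore $(e^{tA}\perP)_{t\in\bbR}$ is a bounded $C_0$-group of positive operators; in particular $\perP\ge 0$, so $\Ima\perP$ is a Banach lattice in an equivalent norm \cite[Proposition~III.11.5]{Schaefer1974}, and each $e^{tA}\perP$ --- being positive with positive inverse $e^{-tA}\perP$ --- is a lattice isomorphism of it.

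The decisive, and I expect hardest, step is to conclude $\perSpec(A)=\{0\}$. Since $A\restrict{\Ima\perP}$ generates a positive semigroup whose spectrum $\perSpec(A)$ has spectral bound $0$, we have $0\in\perSpec(A)$. Suppose some $i\beta\in\perSpec(A)$ with $\beta\neq 0$, and pick $0\neq y\in\ker(i\beta-A)\subseteq\Ima\perP$. As $e^{tA}\perP$ is a lattice homomorphism on $\Ima\perP$, $e^{tA}\perP\,\modulus{y}=\modulus{e^{tA}\perP y}=\modulus{e^{i\beta t}y}=\modulus{y}$, so the strictly positive vector $\modulus{y}$ is a common fixed point of the group, that is $\modulus{y}\in\ker(A\restrict{\Ima\perP})$. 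Feeding this --- together with the cyclicity of the peripheral spectrum of a positive semigroup \cite[Chapter~C-III]{Nagel1986} (equivalently, of the positive operators $e^{tA}\perP$ and their positive inverses) and the finiteness of $\perSpec(A)$ --- into the argument of \cite[Theorem~8.3]{DanersGlueckKennedy2016b} yields the contradiction; heuristically, a genuine rotation $e^{i\beta t}$ with $\beta\neq 0$ cannot be implemented by a one-parameter group of positive operators whose peripheral spectrum is finite. This tension between positivity of the group and finiteness of the peripheral spectrum is where I expect the real work to lie.

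Once $\perSpec(A)=\{0\}$ is known, $0$ is a dominant spectral value and, by the first paragraph, a first-order hence simple pole; write $P:=\perP$ for the corresponding spectral projection, which is positive by the second paragraph (alternatively, positivity of $P$ follows from Proposition~\ref{prop:characterization-simple-pole} at $\lambda_0=0$, whose resolvent hypothesis is supplied by the Laplace-transform representation of $\Res(\argument,A)$ together with an Abelian theorem, much as in the proof of Proposition~\ref{prop:laplace-representation}). Since $0$ is a simple pole, $e^{tA}P=P$, and $e^{tA}(I-P)\to 0$ strongly by the first paragraph, so $e^{tA}=P+e^{tA}(I-P)\to P$ strongly as $t\to\infty$. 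Undoing the normalisation $\spb(A)=0$ completes the proof.
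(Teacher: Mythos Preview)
Your proof is correct and follows essentially the same route as the paper: normalise $\spb(A)=0$, use boundedness to force all peripheral poles to be simple, apply ABLV on $\ker\perP$, transfer asymptotic positivity to $e^{tA}\perP$, deduce $\perP\ge 0$ so that $\Ima\perP$ is a Banach lattice carrying a positive semigroup, and finish via cyclicity plus finiteness of $\perSpec(A)$.

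Two small comparative remarks. First, where you argue via almost periodicity of the trigonometric polynomial $t\mapsto\sum_j e^{i\beta_j t}S_nP_jT_nf$ to recover positivity at every $t_0$, the paper instead quotes \cite[Proposition~2.3]{DanersGlueckKennedy2016a} to obtain a sequence $t_m\to\infty$ with $e^{t_mA}\perP\to\perP$ strongly and then shifts by $t$; these are equivalent packagings of the same recurrence idea. Second, your digression into lattice isomorphisms and the fixed vector $\modulus{y}$ is correct but unnecessary: once you know the restricted semigroup on $\Ima\perP$ is positive and bounded with spectral bound $0$, the cyclicity result \cite[Proposition~C-III.2.9 and Theorem~C-III.2.10]{Nagel1986} applies directly, and finiteness of $\perSpec(A)$ immediately forces $\perSpec(A)=\{0\}$. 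There is no further ``real work'' hiding there --- you already have everything you need, so you can state the conclusion without hedging.
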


\begin{proof}
	Let $\perP$ denote the spectral projection associated to $\perSpec(A)$ and assume without loss of generality that $\spb(A)=0$. Since the semigroup is bounded, every pole of the resolvent $\Res(\argument,A)$ on the imaginary axis  is a simple pole and so $\perSpec(A)$ consists solely of simple poles of the resolvent. Therefore by~\cite[Proposition~2.3]{DanersGlueckKennedy2016a}, there exists a sequence of positive integers $t_m \to \infty$ such that $e^{t_m A}\perP \to \perP$ in the strong operator topology. Further, using boundedness of the semigroup together with the fact that both, the point spectrum of $A'\restrict{\ker \perP}$ and the spectrum of $A\restrict{\ker \perP}$ do not intersect the imaginary axis, we obtain as a consequence of the ABLV theorem~\cite[Theorem~V.2.21]{EngelNagel2000} that the semigroup converges strongly to $0$ on $\ker \perP$. In addition to this, if we are able to show that $\perP$ is positive and ${\perSpec(A)=\{0\}}$, it would mean that the spectral projection associated to $0$ is $\perP\geq 0$ and accordingly $e^{tA}\perP=\perP$. As a result, the theorem would follow.
	
	 To this end, fix $n,m \in \mathbb{N}$ and $t\geq 0$. Then
	 \[
	 	S_n e^{(t+t_m)A}\perP T_n = S_n e^{(t+t_m)A} T_n - S_ne^{(t+t_m)A} (I-\perP)T_n.
	 \]
	  Since for each $f \in E_+$, we have $\lim_{m\to \infty}\dist{S_n e^{(t+t_m)A} T_n  f}{E_+} =0$, therefore
	 \[
	 	\lim_{m\to\infty}\dist{S_n e^{(t+t_m)A}\perP T_n  f}{E_+} \leq \lim_{m\to\infty} \norm{S_ne^{(t+t_m)A} (I-\perP)T_n f}.
	 \]
	It follows that $S_n e^{tA} \perP T_n \geq 0$ for every $n \in \bbN$ and ${t\geq 0}$. Consequently, $\perP$ is a positive projection and the semigroup $\left(e^{tA}\restrict{\Ima \perP}\right)_{t\geq 0}$ is positive. The former along with~\cite[Proposition~III.11.5]{Schaefer1974} implies that $\Ima \perP$ is a complex Banach lattice under an equivalent norm. 
	
	Finally, note that $\spec\left(A \restrict{\Ima \perP}\right) = \perSpec(A) \neq \emptyset$ and so
		\[
			0=\spb(A)=\spb\left(A\restrict{\Ima \perP}\right) \in \spec\left(A\restrict{\Ima \perP}\right)=\perSpec\left(A\restrict{\Ima \perP}\right) =\perSpec(A);
		\]
	here we have used that if the spectrum of the generator of a positive semigroup is non-empty, then it contains the spectral bound of the generator~\cite[Corollary~C-III.1.4]{Nagel1986}. In addition, because the semigroup $\left(e^{tA}\restrict{\Ima \perP}\right)_{t\geq 0}$ is positive, bounded, and the spectral bound $\spb\left(A\restrict{\Ima \perP}\right)=0$, we infer that the peripheral spectrum $\perSpec\left(A\restrict{\Ima \perP}\right)$ is cyclic (this is proved, for instance in~\cite[Proposition~C-III.2.9 and Theorem~C-III.2.10]{Nagel1986}). In other words, if $i\beta \in \perSpec\left(A\restrict{\Ima \perP}\right)$, then we must also have ${ik\beta \in \perSpec\left(A\restrict{\Ima \perP}\right)}$ for every $k \in \bbZ$. On the other hand, $\perSpec(A)$ is assumed to be finite. The cyclicity and finiteness of the peripheral spectrum can simultaneously occur if and only if ${\perSpec(A) =\{0\}}$.
\end{proof}

Employing Theorem~\ref{thm:consequence-strong-convergence} and with similar arguments as in the proof of Theorem~\ref{thm:consequence-resolvent-strongly-positive-projection}, we can obtain strong positivity of the spectral projection as a consequence of local eventual positivity of the semigroup: 

\begin{corollary}
	\label{cor:consequence-semigroup-strongly-positive-projection}
	Let $(e^{tA})_{t\geq 0}$ be a real $C_0$-semigroup on a complex Banach lattice $E$ such that the spectral bound $\spb(A)>-\infty$. Assume that the peripheral spectrum $\perSpec(A)$ is non-empty, finite, and consists only of poles of the resolvent $\Res(\argument,A)$ and that the rescaled semigroup $(e^{t(A-\spb(A))})_{t\geq 0}$ is bounded.
	
	Let $(S_n)$ and $(T_n)$ be sequences of band projections on $E$ converging strongly to the identity operator and let $u$ be a quasi-interior point of $E$. Moreover, suppose for every $n \in \bbN$ and every positive non-zero vector $f \in E$, there exist $t_0 \geq 0$ and $c >0$ such that $S_n e^{tA} T_n f \geq c S_n u$ for all $t\geq t_0$.
	
	Then the spectral bound of $A$ is a pole of the resolvent $\Res(\argument,A)$ and the associated spectral projection is strongly positive.
\end{corollary}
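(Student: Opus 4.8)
The plan is to reduce the qualitative part to Theorem~\ref{thm:consequence-strong-convergence} and then imitate the proof of Theorem~\ref{thm:consequence-resolvent-strongly-positive-projection}, with the orbit $t\mapsto e^{tA}g$ playing the role previously played by $\lambda\mapsto\lambda\Res(\lambda,A)g$. First I would observe that the present hypothesis is stronger than what Theorem~\ref{thm:consequence-strong-convergence} needs: since $S_ne^{tA}T_nf\ge cS_nu\ge 0$ for all large $t$, the distance $\dist{S_ne^{tA}T_nf}{E_+}$ is eventually $0$ for every $f\ge 0$ and every $n$. Hence, after rescaling so that $\spb(A)=0$, Theorem~\ref{thm:consequence-strong-convergence} gives that $0$ is a simple pole of $\Res(\argument,A)$ (in particular the first assertion of the corollary), that the associated spectral projection $P$ is positive with $\Ima P=\ker A$ and $\Ima P'=\ker A'$, and that $e^{tA}\to P$ strongly as $t\to\infty$. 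By~\cite[Proposition~3.1]{DanersGlueckKennedy2016b} it then remains only to prove that $\ker A$ is spanned by a quasi-interior point of $E$ and that $\ker A'$ contains a strictly positive functional.

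I would first record the analogue of~\eqref{eq:negative-resolvent}, namely $(e^{tA}g)^-\to (Pg)^-=0$ as $t\to\infty$ for every $g\ge 0$, which is immediate from $e^{tA}g\to Pg\ge 0$ and relies crucially on positivity of $P$. Positivity and non-triviality of $P$ and $P'$ (with the range identifications above) produce $0<x\in\ker A$ and $0<\psi\in\ker A'$. Next, I would fix $n$ and an arbitrary positive non-zero $y\in\ker A$, apply the hypothesis to $f=y$, and use $e^{tA}y=y$ together with $y-T_ny\ge 0$ (as $T_n$ is a band projection) in the decomposition
\[
	S_ny=S_ne^{tA}T_ny+S_ne^{tA}(y-T_ny)\ge c_1S_nu-S_n\big(e^{tA}(y-T_ny)\big)^-,
\]
valid for all large $t$ with a constant $c_1>0$ \emph{independent of $t$}. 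Letting $t\to\infty$ and using closedness of the positive cone yields $S_ny\ge c_1S_nu$; since $S_nu$ is a quasi-interior point of $\Ima S_n$ by Lemma~\ref{lem:consequence-quasi-dimension}\ref{lem:consequence-quasi-dimension:projection}, so is $S_ny$, and as $n$ was arbitrary, Lemma~\ref{lem:consequence-quasi-dimension}\ref{lem:consequence-quasi-dimension:quasi} shows $y$ is a quasi-interior point of $E$. Thus every positive non-zero vector of $\ker A$ is quasi-interior, and Lemma~\ref{lem:consequence-quasi-dimension}\ref{lem:consequence-quasi-dimension:dimension} gives $\dim\ker A=1$, so $\ker A=\linSpan\{x\}$ with $x$ quasi-interior.

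For strict positivity of $\psi$ I would argue as in the proof of Lemma~\ref{lem:consequence-quasi-dimension}\ref{lem:consequence-quasi-dimension:quasi} that $\duality{\psi}{S_nu}=\duality{S_n'\psi}{S_nu}>0$ for all large $n$, fix one such $n$ and a positive non-zero $f\in E$, and apply the hypothesis to obtain $S_ne^{tA}T_nf\ge c_2S_nu$ for large $t$. Using that the dual semigroup fixes $\psi$ (since $\psi\in\ker A'$), together with $T_nf\ge 0$, $\psi-S_n'\psi\ge 0$, and $(e^{tA}T_nf)^-\to 0$, the key estimate would be
\begin{align*}
	\duality{\psi}{T_nf} &= \duality{\psi}{S_ne^{tA}T_nf}+\duality{\psi-S_n'\psi}{e^{tA}T_nf}\\
	&\ge c_2\duality{\psi}{S_nu}-\duality{\psi-S_n'\psi}{\big(e^{tA}T_nf\big)^-},
\end{align*}
whence $\duality{\psi}{T_nf}\ge c_2\duality{\psi}{S_nu}>0$ after letting $t\to\infty$; since $T_n'\psi\le\psi$, this gives $\duality{\psi}{f}\ge\duality{\psi}{T_nf}>0$, so $\psi$ is strictly positive and~\cite[Proposition~3.1]{DanersGlueckKennedy2016b} completes the proof. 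The main obstacle, as in Theorem~\ref{thm:consequence-resolvent-strongly-positive-projection}, is controlling these limits: one must exploit crucially that the constant supplied by the hypothesis does not depend on $t$, and that the negative parts $(e^{tA}g)^-$ vanish in the limit — which is exactly where the positivity of $P$ furnished by Theorem~\ref{thm:consequence-strong-convergence} enters — while the dual-side computation requires the same care combined with the band-projection inequalities $\psi-S_n'\psi\ge 0$ and $T_n'\psi\le\psi$.
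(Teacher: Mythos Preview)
Your proposal is correct and follows essentially the same approach as the paper's proof: invoke Theorem~\ref{thm:consequence-strong-convergence} to obtain that $\spb(A)$ is a simple pole with positive spectral projection $P$ and strong convergence $e^{tA}\to P$, then mimic the proof of Theorem~\ref{thm:consequence-resolvent-strongly-positive-projection} with the semigroup orbit replacing $\lambda\Res(\lambda,A)$, using the decomposition $S_n y = S_n e^{tA}T_n y + S_n e^{tA}(y-T_n y)$ and the vanishing of negative parts to get $S_n y\ge c_1 S_n u$, and the dual computation for strict positivity of $\psi$. The only cosmetic difference is that you work directly with an arbitrary $0<y\in\ker A$ whereas the paper first treats a fixed $x$ and then remarks that the same argument applies to all such vectors.
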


\begin{proof}
	As usual we assume $\spb(A)=0$. Due to Theorem~\ref{thm:consequence-strong-convergence}, we have that ${0\in\perSpec(A)}$  (hence is a pole of the resolvent $\Res(\argument,A)$) and also the strong convergence of  the operators $e^{tA}$ to $P\geq 0$  as $ t\to \infty$; here $P$ denotes the spectral projection associated to $0$. Therefore
	\begin{equation}\label{eq:negative-semigroup}
		\lim_{t \to \infty} \left(e^{tA}f\right)^- =  (Pf)^- = 0  \, \, \text{for every } f \geq 0.
	\end{equation}
	
	Furthermore, Theorem~\ref{thm:consequence-strong-convergence} also asserts that $0$ is a simple pole of the resolvent $\Res(\argument,A)$. Thus $\Ima P=\ker A$ and $\Ima P'=\ker A'$. The positivity of $P$ also gives the positivity of $P'$ and so we can pick a vector $0<x\in\ker A$ and a functional $0<\psi\in \ker A'$. In view of~\cite[Proposition~3.1]{DanersGlueckKennedy2016b}, it is  enough to show that $\ker A$ is one-dimensional, $x$ is a quasi-interior point of $E$, and that $\psi$ is a strictly positive functional.
	
	In order to prove the above, we argue in a manner analogous to the proof of Theorem~\ref{thm:consequence-resolvent-strongly-positive-projection}. Start by fixing $n \in \bbN$. By assumption, there exist $t_1 \geq 0$ and $c_1>0$ such that ${S_n e^{tA} T_n x \geq c_1 S_n u}$ for all $t \geq t_1$. Additionally, $e^{tA}x=x$ for every $t\geq 0$. Hence
	\begin{align*}
		S_n x &= S_n e^{tA} T_n x + S_n \big(e^{tA} (x- T_n x)\big)\\
			& \geq S_n e^{tA} T_n x - S_n \big(e^{tA} (x- T_n x)\big)^-\\
			& \geq c_1 S_n u -S_n \big(e^{tA} (x- T_n x)\big)^-
	\end{align*}
	for all $t \geq t_1$. Keeping in mind $x-T_n x\geq 0$ and applying \eqref{eq:negative-semigroup}, we obtain ${S_n x\geq c_1 S_n u}$. Arguing exactly as in the proof of Theorem~\ref{thm:consequence-resolvent-strongly-positive-projection} and employing Lemma~\ref{lem:consequence-quasi-dimension}, we can show that $x$ is a quasi-interior point of $E$ and that $\ker A$ is one-dimensional.
	
	Finally, let $0<f \in E$. As in the proof of Theorem~\ref{thm:consequence-resolvent-strongly-positive-projection}, we pick $n \in \bbN$ such that $\duality{\psi}{S_n u}>0$. Once again, our assumption implies the existence of $t_2 \geq 0$ and $c_2>0$ such that $S_n e^{tA} T_n f \geq c_2 S_n u$ for all ${t \geq t_2}$. Also, $(e^{tA})'\psi =\psi $  for all $t\geq 0$. As a result,
	\begin{align*}
		\duality{T_n'\psi}{f} & = \duality{T_n' (e^{tA})' S_n' \psi}{f}  + \duality{T_n' (e^{tA})' (\psi - S_n'\psi)}{f}\\
					& =\duality{\psi}{S_n e^{tA} T_n f} + \duality{\psi-S_n'\psi}{e^{tA}T_n f}\\
					&\geq c_2\duality{\psi}{S_n u} - \duality{\psi-S_n'\psi}{\left(e^{tA}T_n f\right)^{-}}
	\end{align*}
	for all $t \geq t_2$. Employing \eqref{eq:negative-semigroup} a second time, and repeating the final arguments of Theorem~\ref{thm:consequence-resolvent-strongly-positive-projection} yields that $\psi$ is strictly positive.
\end{proof}

We point out that the situation for resolvents mentioned before Theorem~\ref{thm:consequence-resolvent-strongly-positive-projection} holds for semigroups as well~\cite[Theorem~5.2]{DanersGlueckKennedy2016b}, namely:~Let $(e^{tA})_{t\geq 0}$ be a real $C_0$-semigroup on a complex Banach lattice $E$ such that the spectrum of $A$ is non-empty and the peripheral spectrum of $A$ is finite and consists of only poles of the resolvent of $A$. Assume the \emph{smoothing condition} is satisfied, that is, there exists a quasi-interior point $u$ of $E$ and $t_0\geq 0$ such that $e^{t_0A}E \subseteq E_u$. Then the semigroup is individually eventually strongly positive with respect to $u$ if and only if the rescaled semigroup $(e^{t(A-\spb(A))})_{t\geq 0}$ is bounded, $\spb(A)$ is a dominant spectral value and the corresponding spectral projection $P$ satisfies $P\gg_u 0$. Additionally, in~\cite[Theorem~5.1]{DanersGlueck2017}, it was shown that one can prove $P\gg_u 0$ even without the smoothing assumption. In the same way, when we proved a sufficient condition for uniform local eventual positivity of the semigroup in Theorem~\ref{thm:sufficient-uniform-semigroup}, along with a smoothing condition, we tacitly assumed (as a  result of Proposition~\ref{prop:sufficient-projection}) that the spectral projection is locally strongly positive; see also Remark~\ref{rem:sufficient-individual}(a). Conversely, we did not impose any smoothing assumptions  in Corollary~\ref{cor:consequence-semigroup-strongly-positive-projection}.

In Theorem~\ref{thm:consequence-resolvent-strongly-positive-projection}, we looked at a situation where strong local eventual positivity of the resolvent at a spectral value implies that the spectral projection associated to a real spectral value is strongly positive. However, as pointed out previously, the assumptions there were quite strong. Specifically, in the condition ${(\lambda-\lambda_0)S_n\Res(\lambda,A)T_nf\geq cS_n u}$, we assumed that the constant $c>0$ is the same for each $\lambda\in (\lambda_0,\lambda_1]$ and depends only on $n\in \bbN$ and $f\geq 0$. The proof does not work if $c$ also depends on $\lambda$. In particular, we cannot take the limit $\lambda \to 0$ to conclude $S_n x \geq c_1 S_n u$. A similar hypothesis was considered in both Corollaries~\ref{cor:consequence-resolvent-strongly-positive-projection} and \ref{cor:consequence-semigroup-strongly-positive-projection}. However, imposing this condition was not too surprising, since (as indicated earlier)  the results in Sections~\ref{section:sufficient-conditions-individual} and \ref{section:sufficient-conditions-uniform} had similar conclusions. For the rest of this section, we will drop this seemingly strong assumption. As a trade-off though, we only deal with a single sequence of band projections $(S_n)$ and are unable to show strong positivity of the spectral projection.

\begin{theorem}
	\label{thm:consequence-resolvent-spectral}
		Let $A$ be a closed, densely defined, and real operator on a complex Banach lattice $E$ such that $\lambda_0$ is a real spectral value of $A$ and a pole of the resolvent $\Res(\argument,A)$. Suppose $u$ is a quasi-interior point of $E$ and $(S_n)$ is a sequence of band projections on $E$ converging strongly to the identity operator. Furthermore, assume at least one of the following assumptions holds:
		\begin{enumerate}[\upshape (a)]
			\item For each $n \in \bbN$, the family $S_n \Res(\argument, A)$ is individually eventually strongly positive with respect to $S_n u$ at $\lambda_0$.
			
			\item For each $n \in \bbN$, the family $S_n \Res(\argument, A)$ is individually eventually strongly negative with respect to $S_n u$ at $\lambda_0$.
		\end{enumerate}
		
	Then the eigenspace $\ker(\lambda_0-A)$ is one-dimensional and spanned by a quasi-interior point of $E$. Moreover, $\lambda_0$ is a simple pole of the resolvent $\Res(\argument,A)$ and the associated spectral projection is positive.
\end{theorem}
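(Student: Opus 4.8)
The plan is to reduce assertion~(b) to~(a) and then establish~(a) in three steps. For the reduction I would use $\Res(\mu,-A)=-\Res(-\mu,A)$: the family $S_n\Res(\argument,A)$ is individually eventually strongly negative with respect to $S_n u$ at $\lambda_0$ precisely when $S_n\Res(\argument,-A)$ is individually eventually strongly positive with respect to $S_n u$ at $-\lambda_0$, and since $-A$ is again closed, densely defined and real and all three conclusions of the theorem are invariant under $(A,\lambda_0)\mapsto(-A,-\lambda_0)$, it suffices to treat~(a). After rescaling I may assume $\lambda_0=0$. Let $m\in\bbN$ be the order of $0$ as a pole of $\Res(\argument,A)$ and $U_{-m}\in\calL(E)$ the coefficient of $\lambda^{-m}$ in the Laurent expansion of $\Res(\argument,A)$ about $0$, so that $\Ima U_{-m}\subseteq\ker A$, $\Ima U_{-m}'\subseteq\ker A'$, $U_{-m}\neq 0$, and $\lambda^m\Res(\lambda,A)\to U_{-m}$ in operator norm as $\lambda\downarrow 0$.

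\emph{Step 1 ($U_{-m}\geq 0$).} Fixing $n\in\bbN$ and $0<f\in E$, assumption~(a) provides $\lambda_1>0$ with $(0,\lambda_1]\subseteq\rho(A)$ and $S_n\Res(\lambda,A)f\geq 0$ for all $\lambda\in(0,\lambda_1]$; multiplying by $\lambda^m>0$ and letting $\lambda\downarrow 0$ yields $S_n U_{-m}f\geq 0$, hence $S_n U_{-m}\geq 0$ for every $n$, and since $S_n\to I$ strongly and the positive cone is closed, $U_{-m}\geq 0$; dually $U_{-m}'\geq 0$. \emph{Step 2 (every positive non-zero $w\in\ker A$ is quasi-interior in $E$).} From $Aw=0$ one gets $\Res(\lambda,A)w=\lambda^{-1}w$ for $\lambda\in\rho(A)$ near $0$, so applying~(a) to $w>0$ gives, for each $n$, some $\lambda>0$ with $S_n w=\lambda\,S_n\Res(\lambda,A)w\gg_{S_n u}0$. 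Since $u$ is quasi-interior and $S_n$ a positive projection, $S_n u$ is a quasi-interior point of $\Ima S_n$ by Lemma~\ref{lem:consequence-quasi-dimension}\ref{lem:consequence-quasi-dimension:projection}, and so is $S_n w$, which dominates a positive multiple of $S_n u$; as this holds for every $n$, Lemma~\ref{lem:consequence-quasi-dimension}\ref{lem:consequence-quasi-dimension:quasi} shows that $w$ is a quasi-interior point of $E$.

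\emph{Step 3 (conclusion).} By Step 1 the positive non-zero operator $U_{-m}'$ has a positive non-zero functional $\psi\in\ker A'$ in its range, and by Step 2 then $\duality{\psi}{w}>0$ for every positive non-zero $w\in\ker A$; this is condition~\ref{prop:characterization-simple-pole:vector} of Proposition~\ref{prop:characterization-simple-pole}, whose hypothesis holds with $T_n=I$ because under~(a) we have $\dist{S_n\Res(\lambda,A)f}{E_+}=0$ for $\lambda$ in a right neighbourhood of $0$ and every $f\in E_+$. Hence $0$ is a simple pole of $\Res(\argument,A)$ and the spectral projection $P=U_{-1}=U_{-m}$ is positive. (Alternatively one reproves $m=1$ directly as in the proof of \ref{prop:characterization-simple-pole:vector}$\Rightarrow$\ref{prop:characterization-simple-pole:pole}: if $m\geq 2$, pick $0<w=U_{-m}v\in\ker A$ and $0<\psi=U_{-m}'\chi\in\ker A'$; then $\duality{\psi}{w}=\duality{\chi}{U_{-m}^2v}=0$ because $U_{-m}^2$ is a power $\geq m$ of the nilpotent part of the resolvent at $0$, contradicting the quasi-interiority of $w$.) Finally Lemma~\ref{lem:consequence-quasi-dimension}\ref{lem:consequence-quasi-dimension:dimension} applies — $0$ is a simple pole with positive spectral projection, and by Step 2 every positive non-zero eigenvector is quasi-interior — so $\dim\ker A=1$, and a positive non-zero vector of $\Ima P$ spans $\ker A$ and is quasi-interior by Step 2.

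The main obstacle is Step 2: one has to extract usable information from the local strong positivity of $S_n\Res(\lambda,A)$, which is guaranteed only on a right neighbourhood of $\lambda_0$ and with a constant that may degenerate as $\lambda\to\lambda_0$, and one has to see that a single sequence of band projections $(S_n)$ (with $T_n=I$) already suffices. The crucial observation is that on an eigenvector $w$ the resolvent acts by the explicit scalar $\lambda^{-1}$, so multiplying back by $\lambda$ cancels the problematic $\lambda$-dependence and turns strong positivity of $S_n\Res(\lambda,A)w$ into strong positivity of $S_n w$ with respect to $S_n u$; everything else is bookkeeping with the Laurent coefficients of the resolvent and with Lemma~\ref{lem:consequence-quasi-dimension}.
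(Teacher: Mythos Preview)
Your proof is correct and follows essentially the same approach as the paper: the key observation in both is that for an eigenvector $w\in\ker(\lambda_0-A)$ the identity $\lambda\Res(\lambda,A)w=w$ converts local strong positivity of $S_n\Res(\lambda,A)w$ into $S_nw\gg_{S_nu}0$, after which Lemma~\ref{lem:consequence-quasi-dimension} and Proposition~\ref{prop:characterization-simple-pole} finish the argument. The only cosmetic differences are that the paper treats cases~(a) and~(b) in parallel in this core step rather than reducing~(b) to~(a) via $A\mapsto-A$, and invokes Remark~\ref{rem:sufficient-existence-vectors} for the existence of a positive eigenvector instead of reproving $U_{-m}\geq 0$ as in your Step~1.
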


\begin{proof}
	There is no loss of generality in assuming $\lambda_0=0$. Since the sequence $(S_n)$ converges to the identity operator in the strong operator topology, therefore by Remark~\ref{rem:sufficient-existence-vectors}, $\ker A$ contains a positive non-zero element, say $x$. 
	
	If (a) holds, then for each $n\in\bbN$, there exists $\lambda>0$ and such that
	\[
		 S_n \Res(\lambda,A)x \gg_{S_n u} 0;
	\]
	and if (b) holds, then for each $n\in\bbN$, there exists $\lambda<0$ and such that
	\[
		S_n \Res(\lambda,A)x \ll_{S_n u} 0.
	\]
	In either case, 
	\[
		S_n x= \lambda S_n \Res(\lambda,A)x \gg_{S_n u} 0.
	\]
	In particular, $S_n x$ is a quasi-interior point of $\Ima S_n$ for every $n\in \bbN$; here we have used that if $u$ is a quasi-interior point of $E$, then $S_n u$ is a quasi-interior point of $\Ima S_n$ for each $n\in\bbN$ (Lemma~\ref{lem:consequence-quasi-dimension}\ref{lem:consequence-quasi-dimension:quasi}). Appealing again to Lemma~\ref{lem:consequence-quasi-dimension}\ref{lem:consequence-quasi-dimension:quasi}, we deduce that the eigenvector $x$ is a quasi-interior point of $E$. Thus by Proposition~\ref{prop:characterization-simple-pole}, the spectral value $0$ is a simple pole of the resolvent $\Res(\argument,A)$ and the associated spectral projection is positive. Moreover, notice that we have proved that every positive non-zero element of $\ker A$ is a quasi-interior point of $E$. Hence, $\dim \ker A=1$ using Lemma~\ref{lem:consequence-quasi-dimension}\ref{lem:consequence-quasi-dimension:dimension}.
\end{proof}

An analogous result can be proved for the semigroups:

\begin{theorem}
	\label{thm:consequence-semigroup-spectral}
	Let $(e^{tA})_{t\geq 0}$ be a real $C_0$-semigroup on a complex Banach lattice $E$ such that $A$ has non-empty spectrum and the spectral bound $\spb(A)$ is a pole of the resolvent $\Res(\argument,A)$. Suppose $u$ is a quasi-interior point of $E$ and $(S_n)$ is a sequence of band projections on $E$ converging strongly to the identity operator.
	
	If for each $n \in \bbN$, the family $(S_n e^{tA})_{t\geq 0}$ is individually eventually strongly positive with respect to $S_n u$, then the eigenspace $\ker(\spb(A)-A)$ is one-dimensional and spanned by a quasi-interior point of $E$. Moreover, the spectral bound $\spb(A)$ is a simple pole of the resolvent $\Res(\argument,A)$ and the corresponding spectral projection is positive.
\end{theorem}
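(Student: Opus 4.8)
The plan is to follow the proof of Theorem~\ref{thm:consequence-resolvent-spectral} almost verbatim, replacing the resolvent by the semigroup at exactly the point where a positive eigenvector is produced. As always I would first replace $A$ by $A-\spb(A)$ and thereby assume $\spb(A)=0$ (all hypotheses are invariant under this rescaling, since multiplying the semigroup by the positive scalar $e^{-t\spb(A)}$ changes neither strong positivity with respect to $S_nu$ nor the pole structure of the resolvent). The argument then has three parts: (i) produce a positive, non-zero vector in $\ker A$; (ii) upgrade \emph{every} positive non-zero vector of $\ker A$ to a quasi-interior point of $E$; (iii) read off the structural conclusions from Proposition~\ref{prop:characterization-simple-pole} and Lemma~\ref{lem:consequence-quasi-dimension}.

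For (i): individual eventual strong positivity of $(S_ne^{tA})_{t\ge 0}$ with respect to $S_nu$ entails in particular that $(S_ne^{tA})_{t\ge 0}$ is individually eventually positive (a vector strongly positive with respect to $S_nu$ is positive). Since $0$ being a pole of $\Res(\argument,A)$ forces $\spb(A)>-\infty$, Corollary~\ref{cor:laplace-representation}, applied with $T$ the identity operator, gives
\[
	\lim_{\lambda\downarrow 0}\lambda\,\dist{S_n\Res(\lambda,A)f}{E_+}=0\qquad\text{for all }f\in E_+,\ n\in\bbN.
\]
This is exactly condition \eqref{eq:asymptotic-resolvent} for the sequence $(S_n)$ and the constant sequence $T_n=I$ (the identity is a band projection). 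As $A$, being the generator of a real $C_0$-semigroup, is closed, densely defined and real, Remark~\ref{rem:sufficient-existence-vectors} then supplies a positive, non-zero vector $x\in\ker A$ (and, incidentally, a positive non-zero functional in $\ker A'$).

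For (ii): if $0<y\in\ker A$, then $e^{tA}y=y$ for all $t\ge 0$, because $y$ is a $0$-eigenvector and $\spb(A)=0$. Fixing $n$ and applying individual eventual strong positivity of $(S_ne^{tA})_{t\ge 0}$ with respect to $S_nu$ to $f=y$ yields $S_ny=S_ne^{tA}y\gg_{S_n u}0$ for all sufficiently large $t$, hence $S_ny\gg_{S_n u}0$. Since $u$ is a quasi-interior point of $E$, Lemma~\ref{lem:consequence-quasi-dimension}\ref{lem:consequence-quasi-dimension:quasi} (via \ref{lem:consequence-quasi-dimension:projection}) shows $S_nu$ is a quasi-interior point of $\Ima S_n$; a vector strongly positive with respect to a quasi-interior point is itself quasi-interior, so $S_ny$ is a quasi-interior point of $\Ima S_n$ for every $n$, and the converse implication in Lemma~\ref{lem:consequence-quasi-dimension}\ref{lem:consequence-quasi-dimension:quasi} then forces $y$ to be a quasi-interior point of $E$. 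Thus every positive non-zero element of $\ker A$ — in particular $x$ — is a quasi-interior point of $E$.

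For (iii): since $\ker A$ contains a quasi-interior point, the ``in particular'' clause of Proposition~\ref{prop:characterization-simple-pole} (whose hypothesis is precisely the convergence already verified in (i), again with $T_n=I$) shows that $0$ is a simple pole of $\Res(\argument,A)$ with positive spectral projection. Finally, Lemma~\ref{lem:consequence-quasi-dimension}\ref{lem:consequence-quasi-dimension:dimension} — whose hypotheses (simple pole, positive spectral projection, and every positive non-zero eigenvector quasi-interior) are now all in hand — yields $\dim\ker A=1$, so $\ker(\spb(A)-A)$ is one-dimensional and spanned by the quasi-interior point $x$, completing the proof. I do not anticipate a genuine obstacle: the only step that differs from Theorem~\ref{thm:consequence-resolvent-spectral} is the passage from eventual positivity of the semigroup to the resolvent estimate \eqref{eq:asymptotic-resolvent}, which is exactly what Corollary~\ref{cor:laplace-representation} (and behind it the Laplace-transform representation in Proposition~\ref{prop:laplace-representation}) provides; the remaining care is purely bookkeeping about invoking the band-projection lemmas with the constant sequence $T_n=I$.
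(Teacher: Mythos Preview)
Your proposal is correct and follows essentially the same approach as the paper: reduce to $\spb(A)=0$, use Corollary~\ref{cor:laplace-representation} (with $T_n=I$) to feed Remark~\ref{rem:sufficient-existence-vectors} and obtain a positive eigenvector, then exploit $S_n x = S_n e^{tA}x \gg_{S_nu} 0$ exactly as in Theorem~\ref{thm:consequence-resolvent-spectral} together with Lemma~\ref{lem:consequence-quasi-dimension} and Proposition~\ref{prop:characterization-simple-pole}. The paper's proof is terser but structurally identical.
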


\begin{proof}
	We may assume $\spb(A)=0$. Then by Corollary~\ref{cor:laplace-representation} and Remark~\ref{rem:sufficient-existence-vectors}, there exists a positive non-zero vector $x\in \ker A$. By assumption, for each $n\in\bbN$, there exists $t>0$ such that ${S_n x= S_n e^{tA} x \gg_{S_n u} 0}$. The result can now be argued exactly as in the proof of Theorem~\ref{thm:consequence-resolvent-spectral}.
\end{proof}

Due to Remark~\ref{rem:sufficient-existence-vectors}, there exists a positive non-zero functional in both the eigenspaces ${\ker(\lambda_0-A')}$ and ${\ker(\spb(A)-A')}$ in Theorem~\ref{thm:consequence-resolvent-spectral} and Theorem~\ref{thm:consequence-semigroup-spectral} respectively. However, it is unclear whether this functional is strictly positive. If one is able to show that it is indeed strictly positive, then we can conclude that the corresponding spectral projection is strongly positive with the aid of~\cite[Proposition~3.1]{DanersGlueckKennedy2016b}. 

In Theorem~\ref{thm:consequence-strong-convergence}, we proved that (under technical assumptions) the local \emph{asymptotic} positivity of the semigroup implies that the semigroup converges (after an appropriate rescaling) strongly. We now turn our attention to operator norm convergence. In Theorem~\ref{thm:consequence-uniform-convergence}, we show that if we have eventual strong positivity of $(S_n e^{tA})_{t\geq 0}$ and $(e^{tA})_{t\geq 0}$ is norm continuous at infinity, then the convergence in Theorem~\ref{thm:consequence-strong-convergence} is actually in the operator norm topology. In contrast, we do not need to a priori assume the boundedness of $(e^{t(A-\spb(A))})_{t\geq 0}$.  Note that in the proof of Theorem~\ref{thm:sufficient-uniform-semigroup}, we showed that its assumptions ensure that the (rescaled) semigroup converges uniformly.

We briefly mention that uniform convergence of the (rescaled) semigroup was characterized in~\cite[Proposition~2.3]{Webb1987} and~\cite[Theorems~2.7 and 3.3]{Thieme1998}, while a refinement for positive semigroups was given in~\cite[Theorem~3.4]{Thieme1998}. Motivated by this, necessary and sufficient conditions for convergence of uniformly eventually positive semigroups (after appropriate rescaling) in the operator norm were recently studied in~\cite[Section~5]{AroraGlueck2020}.

\begin{theorem}
	\label{thm:consequence-uniform-convergence}
	On a complex Banach lattice $E$, let $(e^{tA})_{t\geq 0}$ be a real $C_0$-semigroup on $E$ that is norm continuous at infinity. Assume $\spb(A)\in\spec(A)$ and that the peripheral spectrum $\perSpec(A)$ consists only of poles of the resolvent $\Res(\argument,A)$. Suppose $u$ is a quasi-interior point of $E$ and $(S_n)$ is a sequence of band projections on $E$ converging strongly to the identity operator.
	
	If for each $n \in \bbN$, the family $(S_n e^{tA})_{t\geq 0}$ is individually eventually strongly positive with respect to $S_n u$, then the rescaled semigroup $(e^{t(A-\spb(A))})_{t\geq 0}$ converges to the spectral projection associated to $\spb(A)$ in the operator norm.
\end{theorem}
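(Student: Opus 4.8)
The plan is to reduce to the configuration appearing in the proof of Theorem~\ref{thm:sufficient-uniform-semigroup}, where uniform convergence of the rescaled semigroup was obtained from Thieme's theorem, and to manufacture the spectral data that that argument needs from the local positivity hypothesis. Throughout we normalise so that $\spb(A)=0$.

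The first task is to convert ``$(e^{tA})_{t\geq 0}$ is norm continuous at infinity'' into usable spectral information. This hypothesis is equivalent to essential norm continuity (Blake~\cite[Corollary~3.3.7]{Blake1999}; Nagel--Poland~\cite[Corollary~4.7]{NagelPoland2000}), and the spectral theory of such semigroups (Mart\'{\i}nez--Maz\'on; Thieme~\cite[Definition~2.1 and Proposition~2.3]{Thieme1998}) yields the spectral mapping theorem for $(e^{tA})_{t\geq 0}$, the identity $\gbd(A)=\spb(A)=0$, the finiteness of $\perSpec(A)$, the norm decay of the semigroup on the complement of the peripheral spectral subspace and --- since the peripheral poles are then of first order --- the boundedness of $(e^{tA})_{t\geq 0}$. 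With this in hand, all the hypotheses of Theorem~\ref{thm:consequence-strong-convergence} hold (take $T_n=I$ as the second sequence of band projections): $\perSpec(A)$ is non-empty, finite and consists of poles, the semigroup is bounded, and individual eventual strong positivity of $(S_n e^{tA})_{t\geq 0}$ with respect to $S_n u$ in particular gives $\dist{S_n e^{tA}f}{E_+}\to 0$ as $t\to\infty$ for every $f\geq 0$ and every $n$. Theorem~\ref{thm:consequence-strong-convergence} then delivers that $0$ is a \emph{simple} pole of $\Res(\argument,A)$, a \emph{dominant} spectral value, that $e^{tA}\to P$ in the strong operator topology, and that the spectral projection $P$ associated with $0$ is positive.

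It remains to upgrade strong convergence to norm convergence, and here one repeats verbatim the argument in the proof of Theorem~\ref{thm:sufficient-uniform-semigroup}: the semigroup is norm continuous at infinity, $0$ is a simple pole of the resolvent and a dominant spectral value, hence by~\cite[Theorem~2.7]{Thieme1998} the semigroup converges in operator norm; as $0$ is a pole we have $e^{tA}P=P$, so the limit is the spectral projection associated with $0$. Undoing the rescaling gives that $(e^{t(A-\spb(A))})_{t\geq 0}$ converges in operator norm to the spectral projection associated with $\spb(A)$.

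The main obstacle is the middle step --- extracting the precise package of spectral facts (above all the finiteness of $\perSpec(A)$ and the boundedness of the rescaled semigroup) from norm continuity at infinity together with $0\in\spec(A)$ and the assumption that $\perSpec(A)$ consists of poles. This forces one to work carefully with the structure theory of (essentially) norm continuous semigroups: the partial spectral mapping theorem, the splitting $e^{tA}=e^{tA}\perP+e^{tA}(I-\perP)$ with norm-decaying second summand, and the first-order nature of the peripheral poles. Everything downstream of that package is routine.
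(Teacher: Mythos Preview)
There is a genuine gap in your middle step. You assert that the structure theory of norm-continuous-at-infinity semigroups delivers, among other things, ``the first-order nature of the peripheral poles'', and you then use this to obtain boundedness of $(e^{tA})_{t\geq 0}$, which in turn is a hypothesis of Theorem~\ref{thm:consequence-strong-convergence}. But simplicity of the peripheral poles is \emph{not} a consequence of norm continuity at infinity together with $\gbd(A)=\spb(A)=0$ and finiteness of $\perSpec(A)$. A two-dimensional counterexample already shows this: with $A=\begin{psmallmatrix}0&1\\0&0\end{psmallmatrix}$ one has $e^{tA}=\begin{psmallmatrix}1&t\\0&1\end{psmallmatrix}$, which is norm continuous, satisfies $\gbd(A)=\spb(A)=0$, has $\perSpec(A)=\{0\}$, yet $0$ is a pole of order two and the semigroup is unbounded. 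Neither Mart\'{\i}nez--Maz\'on nor Thieme's Proposition~2.3 asserts simplicity of the peripheral poles; Thieme's Theorem~2.7 \emph{requires} it as input.

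The paper closes this gap by extracting simplicity from the local eventual positivity hypothesis, \emph{before} any boundedness is known. First, Theorem~\ref{thm:consequence-semigroup-spectral} (which needs only that $\spb(A)$ is a pole, not that the semigroup is bounded) shows that $\spb(A)$ is a \emph{simple} pole. Then Lemma~\ref{lem:consequence-all-poles-simple} (again not requiring boundedness) forces every other peripheral pole to be simple as well. Only then does one obtain boundedness on $\Ima\perP$ from the decomposition $\Ima\perP=\bigoplus_j\ker(i\beta_j-A)$, and on $\ker\perP$ from the norm-continuity argument you sketch; with boundedness in hand, Theorem~\ref{thm:consequence-strong-convergence} applies and the rest of your argument goes through. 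So the architecture you propose is essentially right, but the order of deductions matters: the positivity hypothesis must be used to get the pole orders, not the spectral theory of norm-continuous-at-infinity semigroups.
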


We outsource an important part of the argument to the following lemma -- since it is true under weaker assumptions. It gives information about the order of any pole of the resolvent in the peripheral spectrum $\perSpec(A)$, when one a priori knows that $\spb(A)$ is a pole of the resolvent (cf.~\cite[Corollary~C-III-1.5]{Nagel1986} and~\cite[Theorem~7.7(ii)]{DanersGlueckKennedy2016a}).

\begin{lemma}
	\label{lem:consequence-all-poles-simple}
	Let $(e^{tA})_{t\geq 0}$ be a real $C_0$-semigroup on a complex Banach lattice $E$ such that spectrum of $A$ is non-empty and $\spb(A)$ is a pole of the resolvent $\Res(\argument,A)$. Furthermore, let $(S_n)$ and $(T_n)$ be sequences of band projections on $E$ converging strongly to the identity operator. If for each $n \in \bbN$, the family $(S_n e^{tA}T_n)_{t\geq 0}$ is individually eventually positive, then the order of every pole of the resolvent $\Res(\argument,A)$ in $\perSpec(A)$ is at most the pole order of $\spb(A)$.
\end{lemma}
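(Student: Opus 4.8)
The plan is to adapt, to the present local setting, the classical argument by which one bounds the order of peripheral resolvent poles of a positive semigroup in terms of the pole order at the spectral bound: use the Laplace representation of Proposition~\ref{prop:laplace-representation} to reach the peripheral poles through the semigroup, and then play off the positivity of $S_ne^{tA}T_nf$ for large $t$ against the leading Laurent coefficient at $\spb(A)$. Assume without loss of generality that $\spb(A)=0$; let $m\in\bbN$ be the order of the pole $0$ and $U_{-m}\neq 0$ the coefficient of $\lambda^{-m}$ in the Laurent expansion of $\Res(\argument,A)$ about $0$. Fix an arbitrary pole $i\beta\in\perSpec(A)$, say of order $k$, with leading Laurent coefficient $V_{-k}\neq 0$; the goal is to show $k\le m$, so I would argue by contradiction and suppose $k\ge m+1$.

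First I would record two representations. By Proposition~\ref{prop:laplace-representation} (which applies to each pair $(S_n,T_n)$ because $(S_ne^{tA}T_n)_{t\ge 0}$ is individually eventually positive), for every $n\in\bbN$, every $f\in E$ and every $\lambda$ with $\re\lambda>0$ one has $S_n\Res(\lambda,A)T_nf=\int_0^\infty e^{-\lambda t}S_ne^{tA}T_nf\,dt$ as an improper Riemann integral. Since $\lambda^{m}\Res(\lambda,A)\to U_{-m}$ as $\lambda\downarrow 0$ and $(\lambda-i\beta)^{k}\Res(\lambda,A)\to V_{-k}$ as $\lambda\to i\beta$, both in operator norm (see~\cite[Paragraph~IV.1.7]{EngelNagel2000}), and since $\|S_n\|,\|T_n\|\le 1$, specializing to $\lambda=\varepsilon\downarrow 0$, resp.\ $\lambda=i\beta+\varepsilon$ with $\varepsilon\downarrow 0$, gives
\begin{align*}
	S_nU_{-m}T_nf&=\lim_{\varepsilon\downarrow 0}\varepsilon^{m}\int_0^\infty e^{-\varepsilon t}S_ne^{tA}T_nf\,dt,\\
	S_nV_{-k}T_nf&=\lim_{\varepsilon\downarrow 0}\varepsilon^{k}\int_0^\infty e^{-(i\beta+\varepsilon)t}S_ne^{tA}T_nf\,dt.
\end{align*}

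Now fix $n$ and $0\le f\in E$, and choose $t_0\ge 0$ with $S_ne^{tA}T_nf\ge 0$ for all $t\ge t_0$. I would split both integrals at $t_0$. The contributions of $[0,t_0]$ are $O(\varepsilon^{m})$, resp.\ $O(\varepsilon^{k})$, so they vanish in the limits since $m,k\ge 1$; moreover, subtracting the finite piece $\int_0^{t_0}$ from the full integral shows $\varepsilon^{m}\int_{t_0}^\infty e^{-\varepsilon t}S_ne^{tA}T_nf\,dt\to S_nU_{-m}T_nf$ as $\varepsilon\downarrow 0$, all the integrals involved being convergent by the previous paragraph. On $[t_0,\infty)$ we have $|e^{-(i\beta+\varepsilon)t}v|=e^{-\varepsilon t}|v|$ and $|S_ne^{tA}T_nf|=S_ne^{tA}T_nf$, so, using $\bigl|\int g\bigr|\le\int|g|$,
\[
	\Bigl|\,\varepsilon^{k}\int_{t_0}^\infty e^{-(i\beta+\varepsilon)t}S_ne^{tA}T_nf\,dt\,\Bigr|\le\varepsilon^{\,k-m}\cdot\varepsilon^{m}\int_{t_0}^\infty e^{-\varepsilon t}S_ne^{tA}T_nf\,dt.
\]
Letting $\varepsilon\downarrow 0$, the right-hand side tends to $0$, because $\varepsilon^{k-m}\to 0$ (here $k\ge m+1$ is used) while the remaining factor tends to $S_nU_{-m}T_nf\ge 0$, whereas the left-hand side tends to $|S_nV_{-k}T_nf|$. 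Hence $S_nV_{-k}T_nf=0$; since the positive cone is generating this holds for all $f\in E$, and as $T_nf\to f$, $\|S_n\|\le 1$ and $S_n\to I$ strongly, I obtain $V_{-k}f=\lim_{n}S_nV_{-k}T_nf=0$ for every $f\in E$, i.e.\ $V_{-k}=0$. This contradicts $i\beta$ being a pole of order $k$, so $k\le m$, as claimed.

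The step I expect to need the most care is the bookkeeping around the splitting at $t_0$: one must check that the $[0,t_0]$-pieces are indeed $O(\varepsilon^{m})$, resp.\ $O(\varepsilon^{k})$, that $\varepsilon^{m}\int_{t_0}^\infty e^{-\varepsilon t}S_ne^{tA}T_nf\,dt$ really converges to $S_nU_{-m}T_nf$, and that every improper Riemann integral appearing converges. All of this comes down to subtracting the continuous (hence bounded) piece $\int_0^{t_0}\|S_ne^{tA}T_nf\|\,dt$ from the full Laplace integral and combining the operator-norm convergences $\lambda^{m}\Res(\lambda,A)\to U_{-m}$ and $(\lambda-i\beta)^{k}\Res(\lambda,A)\to V_{-k}$ with Proposition~\ref{prop:laplace-representation}.
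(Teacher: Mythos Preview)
Your proof is correct and follows essentially the same approach as the paper: both use the Laplace representation from Proposition~\ref{prop:laplace-representation} together with eventual positivity of $S_ne^{tA}T_nf$ to dominate $|S_n\Res(i\beta+\varepsilon,A)T_nf|$ by $S_n\Res(\varepsilon,A)T_n|f|$ modulo a bounded correction on $[0,t_0]$, and then let $n\to\infty$. The only cosmetic differences are that the paper argues directly (showing $(\lambda-i\beta)^k\Res(\lambda,A)f\to 0$ for $k>m$) rather than by contradiction, and works with real $f$ via $|Q_tf|\le Q_t|f|$ for $t\ge t_0$ instead of starting with $f\ge 0$ and extending by linearity.
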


\begin{proof}
	We assume that $\spb(A)=0$  and define ${Q_t:=S_n e^{tA} T_n}$ for $t\geq 0$ and fixed $n \in \bbN$. By assumption, there exists $t_0 \geq 0$ such that $Q_t f^{+} \geq 0$ and $Q_t f^{-} \geq 0$ for all $t \geq t_0$. Let $f \in E_{\bbR}$ and $i\beta \in \perSpec(A)$ be a pole of the resolvent $\Res(\argument,A)$. Then for every $\alpha >0$, we have
	\begin{align*}
		\modulus{\int_0^{\tau} e^{-(\alpha+i\beta) t} Q_t f\, dt} &\leq \int_0^{\tau} e^{-\alpha t} \modulus{Q_t f}\, dt\\
											&\leq \int_0^{\tau} e^{-\alpha t} Q_t \modulus{f}\, dt + \int_0^{t_0} e^{-\alpha t} \left( \modulus{Q_t f}- Q_t \modulus{f}\right)\, dt
	\end{align*}
	for all $\tau \geq t_0$. We can now use our Laplace representation of the resolvent (Proposition~\ref{prop:laplace-representation}), which gives
	\[
		\modulus{S_n\Res(\lambda,A)T_n f} \leq S_n \Res(\alpha,A)T_n \modulus{f} +\int_0^{t_0} e^{-\alpha t} \left( \modulus{Q_t f}- Q_t\modulus{f}\right)\, dt,
	\]
	where $\lambda = \alpha + i \beta$ and $\alpha>0$. This implies
	\[
		\modulus{(\lambda-i\beta)^k S_n \Res(\lambda,A)T_n f} \leq \alpha^k S_n\Res(\alpha,A)T_n \modulus{f} + \alpha^k \int_0^{t_0} e^{-\alpha t} \left( \modulus{Q_t f}- Q_t\modulus{f}\right)\, dt
	\]
	for all $k\in \bbN$. If $0$ is a pole of order, say $m\in\bbN$, then using convergence of the sequences $(S_n)$ and $(T_n)$ to the identity operator, we may conclude the convergence ${(\lambda-i\beta)^k \Res(\lambda,A) f \to 0}$ as $\lambda \downarrow i\beta$ for all $k \geq m$ and hence the order of $i\beta$ as a pole of the resolvent $\Res(\argument,A)$ is at most $m$.
\end{proof}

\begin{proof}[Proof of Theorem~\ref{thm:consequence-uniform-convergence}]
	Without loss of generality, assume $\spb(A)=0$. Since the semigroup is norm continuous at infinity, so by~\cite[Theorem~1.9]{MartinezMazon1996}, we have compactness of the set ${\{\lambda\in \spec(A) : \re \lambda \geq -\epsilon\}}$ for some $\epsilon>0$. Therefore $\perSpec(A)$ must be compact as well. This implies that it must be finite and isolated from the rest of the spectrum as it consists only of poles. Let $\perSpec(A)=\{i\beta_1,i\beta_2,\ldots,i\beta_k\}$. 
	
	Now by Theorem~\ref{thm:consequence-semigroup-spectral}, the spectral value $0$ is a simple pole of the resolvent $\Res(\argument,A)$. Therefore, every pole of $\perSpec(A)$ is also simple according to Lemma~\ref{lem:consequence-all-poles-simple}. Hence if $\perP$ is the spectral projection associated to $\perSpec(A)$, then we have the decomposition ${\Ima \perP= \oplus_{j=1}^k \ker(i\beta_j - A)}$ (see~\cite[Theorem~VIII.8.3]{Yosida1980}). This shows that the semigroup is bounded on $\Ima \perP$. Moreover, compactness of the set ${\{\lambda\in \spec(A) : \re \lambda \geq -\epsilon\}}$ also implies that ${\spb\left(A\restrict{\ker \perP}\right)<0}$. 
	
	We claim that the growth bound $\gbd\left(A\restrict{\ker \perP}\right)$ of the restricted semigroup $\left(e^{tA}\restrict{\ker \perP}\right)_{t\geq 0}$ is negative as well. First of all, 
	\[
		\gbd\left(A\restrict{\ker \perP}\right) \leq \gbd(A)=\spb(A)=0,
	\]
	where the second last equality is true because growth bound of the semigroup coincides with the spectral bound of the generator for a semigroup that is norm continuous at infinity~\cite[Corollary~1.4]{MartinezMazon1996}. Let if possible, $\gbd\left(A\restrict{\ker \perP}\right)=0$. Then by definition, the semigroup $\left(e^{tA}\restrict{\ker \perP}\right)_{t\geq 0}$ would also be norm continuous at infinity. This in turn implies that ${\gbd\left(A\restrict{\ker \perP}\right) = \spb\left(A\restrict{\ker \perP}\right)<0}$, which is a contradiction and our claim follows. As a result, $(e^{tA})_{t\geq 0}$ converges to $0$ uniformly on $\ker \perP$ as $t\to\infty$, and in turn is bounded on $\ker \perP$. Consequently, the semigroup is bounded on ${E= \Ima \perP \oplus \ker \perP}$, as well.
	
	Finally, in Theorem~\ref{thm:consequence-strong-convergence} we showed that $0$ is a dominant spectral value and the associated spectral projection is $\perP$. For this reason, $e^{tA}$ acts as the identity operator on $\perP$. This, together with the uniform convergence to $0$ of the semigroup on $\ker \perP$, results in the convergence $e^{tA} \to \perP$ in the operator norm as ${t\to \infty}$.
\end{proof}

\section*{Acknowledgement}
I am indebted to Jochen Gl\"uck for numerous discussions and for contributing several ideas.  I am also grateful to him for his insightful comments and suggestions which helped me to further improve the paper.

My thanks also go to Ralph Chill for reading various drafts of the paper in detail and providing valuable remarks. In particular, his advice considerably generalized the results in Sections~\ref{section:sufficient-conditions-individual}, \ref{section:sufficient-conditions-uniform}, and \ref{section:consequences-i}; whereas the first drafts of the paper were restricted to band projections instead of positive operators -- and hence not applicable on the space of continuous functions.

I also thank Davide Addona for pointing out a mistake in an earlier version of the paper and Federica Gregorio for conveying the same.

Some results of this paper, namely Theorems~\ref{thm:local-anti-maximum} and \ref{thm:consequence-resolvent-strongly-positive-projection} and Corollaries~\ref{cor:local-maximum}, \ref{cor:consequence-resolvent-strongly-positive-projection}, and \ref{cor:consequence-semigroup-strongly-positive-projection} were initiated during a very pleasant stay of the author at Universit\"at Passau.

The author was financially supported by Deutscher Akademischer Austauschdienst (Forschung\-sstipendium-Promotion in Deutschland).


\end{document}